\documentclass[a4paper,12pt,reqno]{amsart}
\usepackage{amsmath,graphicx,graphics,amssymb}

\usepackage[font=scriptsize]{caption}

\textwidth17cm
\textheight22.7cm
\oddsidemargin-0.5cm
\evensidemargin-0.5cm

\newtheorem{theo}{Theorem}[section]
\newtheorem{lem}[theo]{Lemma}

\newtheorem{cor}[theo]{Corollary}

\numberwithin{equation}{section}

\newcommand{\M}{\operatorname{M}}

\newcommand{\de}{\operatorname{d}}
\newcommand{\q}{\operatorname{q}}
\newcommand{\e}{\operatorname{e}}
\newcommand{\f}{\operatorname{f}}

\newcommand{\h}{\operatorname{h}}

\newcommand{\Z}{\mathbb{Z}}

\newcommand{\ve}{\operatorname{v}}

\newmuskip\pFqskip
\pFqskip=6mu
\mathchardef\pFcomma=\mathcode`, 

\begin{document}

\title[Round Aztec windows and a symmetry of the correlation of diagonal slits]{Round Aztec windows, a dual of the Aztec diamond theorem and a curious symmetry of the correlation of diagonal slits}


\author{Mihai Ciucu}
\address{Department of Mathematics, Indiana University, Bloomington, Indiana 47405}
\email{mciucu@iu.edu}
\thanks{Research supported in part by Simons Foundation Collaboration Grant 710477}

\begin{abstract} Fairly shortly after the publication of the Aztec diamond theorem of Elkies, Kuperberg, Larsen and Propp in 1992, interest arose in finding the number of domino tilings of an Aztec diamond with an ``Aztec window,'' i.e.\ a hole in the shape of a smaller Aztec diamond at its center. Several intriguing patterns were discovered for the number of tilings of such regions, but the numbers themselves were not ``round'' --- they didn't seem to be given by a simple product formula. In this paper we consider a very closely related shape of holes (namely, odd Aztec rectangles), and prove that a large variety of regions obtained from Aztec rectangles by making such holes in them possess the sought-after property that the number of their domino tilings is given by a simple product formula. We find the same to be true for certain symmetric cruciform regions. We also consider graphs obtained from a toroidal Aztec diamond by making such holes in them, and prove a simple formula that governs the way the number of their perfect matchings changes under a natural evolution of the holes. This yields in particular a natural dual of the Aztec diamond theorem. Some implications for the correlation of such holes are also presented, including an unexpected symmetry for the correlation of diagonal slits on the square grid.
\end{abstract}

\maketitle

%
%

\section{Introduction}



The subject of this article is the enumeration of domino tilings of some families of regions on the square lattice. This is part of a vast area, comprising also tilings of other lattices, and more generally perfect matchings of graphs. For an overview the reader is referred to Propp's survey article \cite{ProppTilings}.

The Aztec diamond of order $n$ is the region $AD_n$ on the square grid obtained by stacking $2n$ unit-height horizontal strips of lengths $2,4,6,\dotsc,2n,2n,2n-2,\dotsc,2$ on top of each other so that they have a common vertical symmetry axis (the outside boundary of the region on the left in Figure \ref{fia} traces out $AD_{18}$). These regions were introduced by Elkies, Kuperberg, Larsen and Propp in their 1992 paper \cite{EKLP}, where they proved that the number of their domino tilings is given by the very simple formula\footnote{ For a region $R$ on the square lattice we denote by $\M(R)$ the number of tilings of $R$.}
\begin{equation}
\M(AD_n)=2^{n(n+1)/2}.
\label{eia}
\end{equation}
%
Fairly shortly after that, interest arose in finding the number of domino tilings of an Aztec diamond with an “Aztec window,” i.e. a hole in the shape of a smaller Aztec diamond at its center (see Figure \ref{fia} for an example). More precisely, Problem 14 in Propp's list of open problems in \cite{ProppList} (whose first form was published in 1996) presents several intriguing patterns that the number of tilings of such regions seem to satisfy (these patterns were conjectured based on data), but the numbers themselves are not ``round'' --- the presence of relatively large primes in their factorizations suggests that they are not given by a simple product formula.


The main point of this paper is to show that if we change the shape of the window to being an {\it odd} Aztec diamond (and more generally, an odd Aztec rectangle), we obtain families of regions that do have this sought-after feature --- the number of their domino tilings is given by simple product formulas.

More precisely, the {\it odd Aztec diamond} of order $n$ is the region\footnote{ In the previous literature this region is usually denoted by $OD_n$; due to its ubiquitous appearance, we denote it in this paper by the simpler $O_n$. This region was also referred to by James Propp (see \cite[Problem~27]{ProppList}) as the ``fool's diamond,'' because, as one readily sees, it has no domino tilings. Nevertheless, it plays an essential role in this paper!} $O_n$ obtained by stacking $2n+1$ unit-height horizontal strips of lengths $1,3,5,\dotsc,2n-1,2n+1,2n-1,\dotsc,1$ on top of each other so that they have a common vertical symmetry axis (in the picture on the right in  Figure \ref{fia}, the boundary of the hole traces out the odd Aztec diamond $O_2$).

One readily sees that if the unit squares of $\Z^2$ are colored in chessboard fashion, then $O_n$ contains $2n+1$ more unit squares of the majority color. Since $AD_n$ is {\it balanced}, i.e.\ it has the same number of black and white unit squares, making a hole in $AD_n$ in the shape of an $O_k$ will produce a region which is not balanced, hence not tileable by dominos. A natural way to fix this is to start with an {\it Aztec rectangle} $AD_{m,n}$ (in the picture on the right in Figure \ref{fia} the outside boundary traces out $AD_{23,18}$); in it, the difference between the number of unit squares of the two colors is readily seen to be $|m-n|$; by choosing appropriate values for the parameters $m$ and $n$, this can be adjusted so that the Aztec rectangle with the odd Aztec diamond hole
is balanced (see the picture on the right in Figure \ref{fia} for an example).

\begin{figure}[t]
\centerline{
{\includegraphics[width=0.45\textwidth]{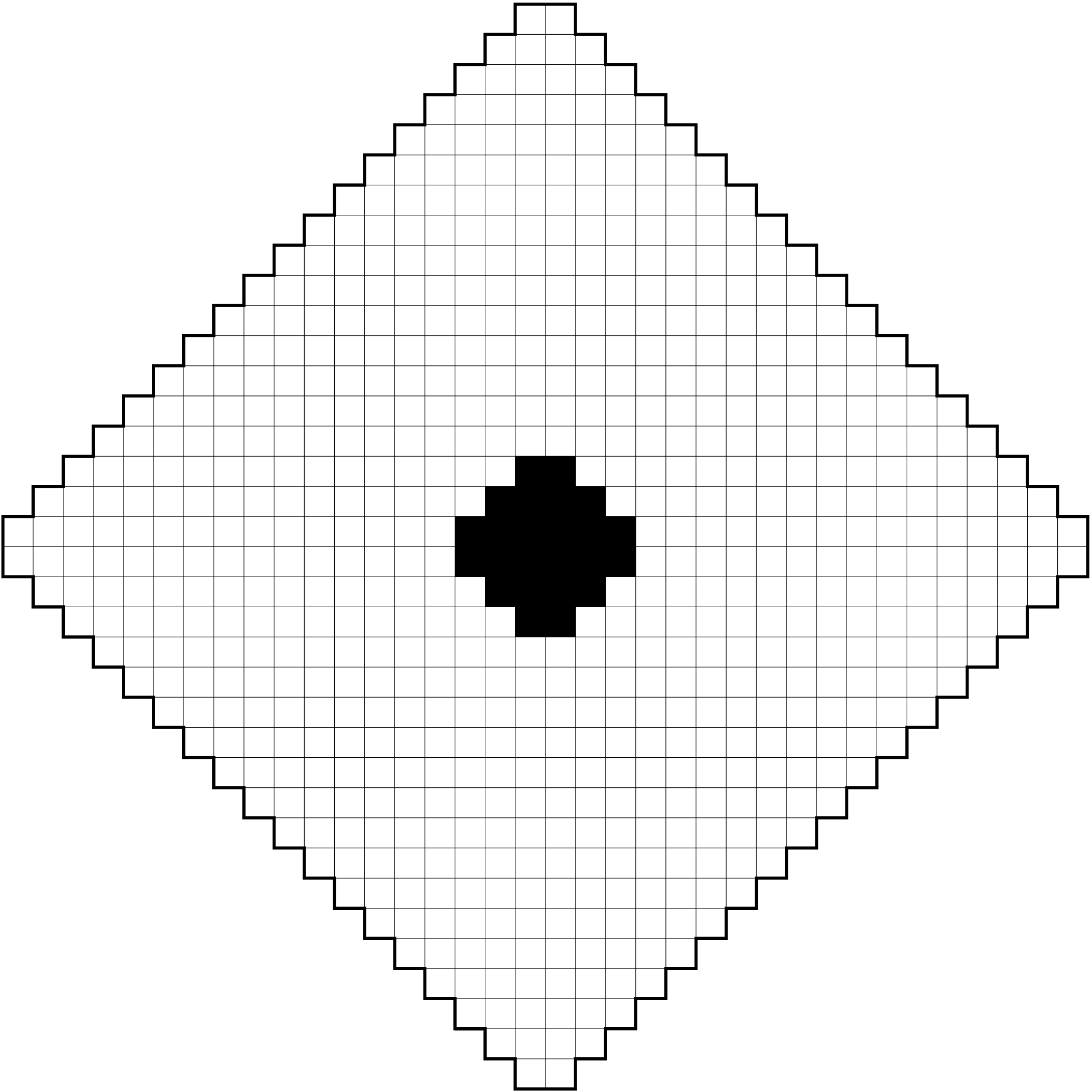}}
\hfill
{\includegraphics[width=0.512\textwidth]{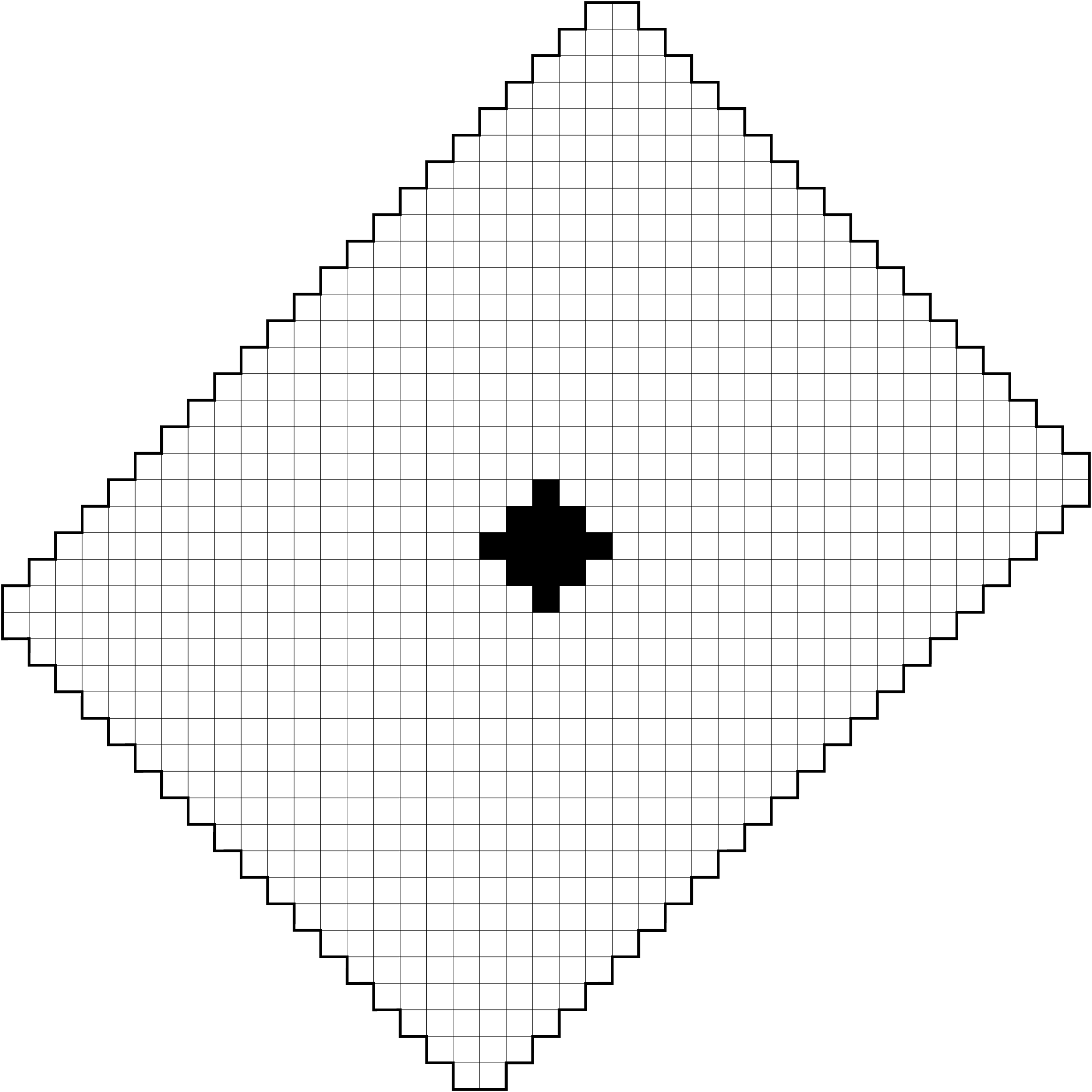}}
}
\vskip0.1in
\caption{ {\it Left.} The Aztec diamond $AD_{18}$ with an $AD_3$-shaped Aztec window at its center. It has $2^{87}\cdot3^2\cdot11^2\cdot101^2\cdot131^2\cdot6961^2$ domino tilings. {\it Right.} The Aztec rectangle $AD_{23,18}$ with an $O_2$-shaped odd Aztec window along its symmetry axis. It has $2^{118}\cdot3^4\cdot11^6\cdot13^8\cdot17^4\cdot19^2$ domino tilings.}
\vskip-0.1in
\label{fia}
\end{figure}

We will show more generally that any collection of same-height odd Aztec rectangle holes placed along the symmetry axis of an Aztec rectangle region produces a region whose number of domino tilings is given by a simple product formula. This is presented in Section 3 (see Figures \ref{fba}--\ref{fbc} and Theorems \ref{tba}--\ref{tbd}). Our results are phrased in terms of the number of perfect matchings of Aztec rectangle graphs from which a collinear set of vertices has been removed (either from the horizontal symmetry axis or from the grid diagonal just below this symmetry axis), for which product formulas were found in our earlier work \cite{FT} and by Krattenthaler in \cite{Kratt}; for convenience, these are recalled in Section 2.

In Section 4 we consider general collections of odd Aztec rectangular holes (i.e., not necessarily collinear or of the same height), and encase them in {\it toroidal} Aztec rectangles (obtained from Aztec rectangle graphs by identifying corresponding vertices on opposite sides of the boundary; see Figure \ref{fca} for an illustration). We prove that if these holes evolve in a certain natural way, the number of perfect matchings of the corresponding toroidal graphs with holes changes just by a multiplicative factor, which is a power of 2 (see Theorem \ref{tca}). Several consequences are also deduced, including a natural dual of Elkies, Kuperberg, Larsen and Propp's Aztec diamond theorem (see Corollary \ref{tcf}) and
a surprising symmetry of the correlation of diagonal slits, which holds already at the level of finite size correlations (see Corollary \ref{tcd} and Remark 6).

Our proofs are presented in Sections 5 and 6. They are based on the complementation theorem for perfect matchings of cellular graphs we obtained in \cite{CT}. 



\begin{figure}[t]
\centerline{
\hfill
{\includegraphics[width=0.25\textwidth]{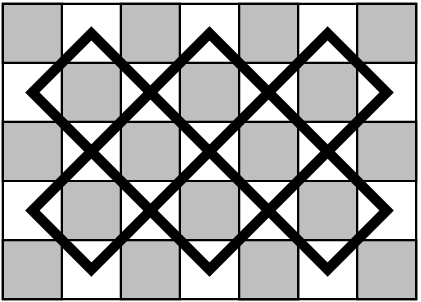}}
\hfill
{\includegraphics[width=0.25\textwidth]{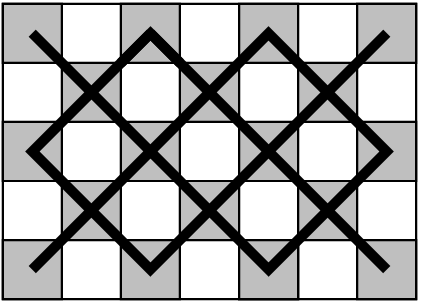}}
\hfill
}
\vskip0.1in
\caption{ Two graphs constructed from a $(2m+1)\times(2n+1)$ chessboard with black corners: the Aztec rectangle graph $AR_{m,n}$ (left) and the odd Aztec rectangle graph $O_{m,n}$ (right), shown here for $m=2$, $n=3$.}
\vskip-0.1in
\label{fbaaa}
\end{figure}

\section{Four families of graphs}

Given a $(2m+1)\times(2n+1)$ chessboard with black corners, the {\it Aztec rectangle graph $AR_{m,n}$} is the graph whose vertex set consists of the white squares, with two vertices connected by an edge precisely if the corresponding squares share a corner. The {\it odd Aztec rectangle graph} $O_{m,n}$ is defined analogously, using the black squares (see Figure \ref{fbaaa} for an example of each). The lattice regions whose planar duals are these graphs are called {\it Aztec rectangle} (resp., {\it odd Aztec rectangle}) {\it regions}; see Figure \ref{fba} for some examples (the outer boundary encloses the Aztec rectangle region $AD_{16,28}$, while the regions shown in black are, from left to right, the odd Aztec rectangles $O_{2,2}$, $O_{2,0}$ and $O_{2,1}$). Clearly, the domino tilings of a region on the square lattice can be identified with the perfect matchings of its planar dual graph. We will find it convenient to use both points of view, as the definition of our objects is more natural as regions, but in our proofs we regard them as graphs.

\begin{figure}[t]
\centerline{
\hfill
{\includegraphics[width=0.99\textwidth]{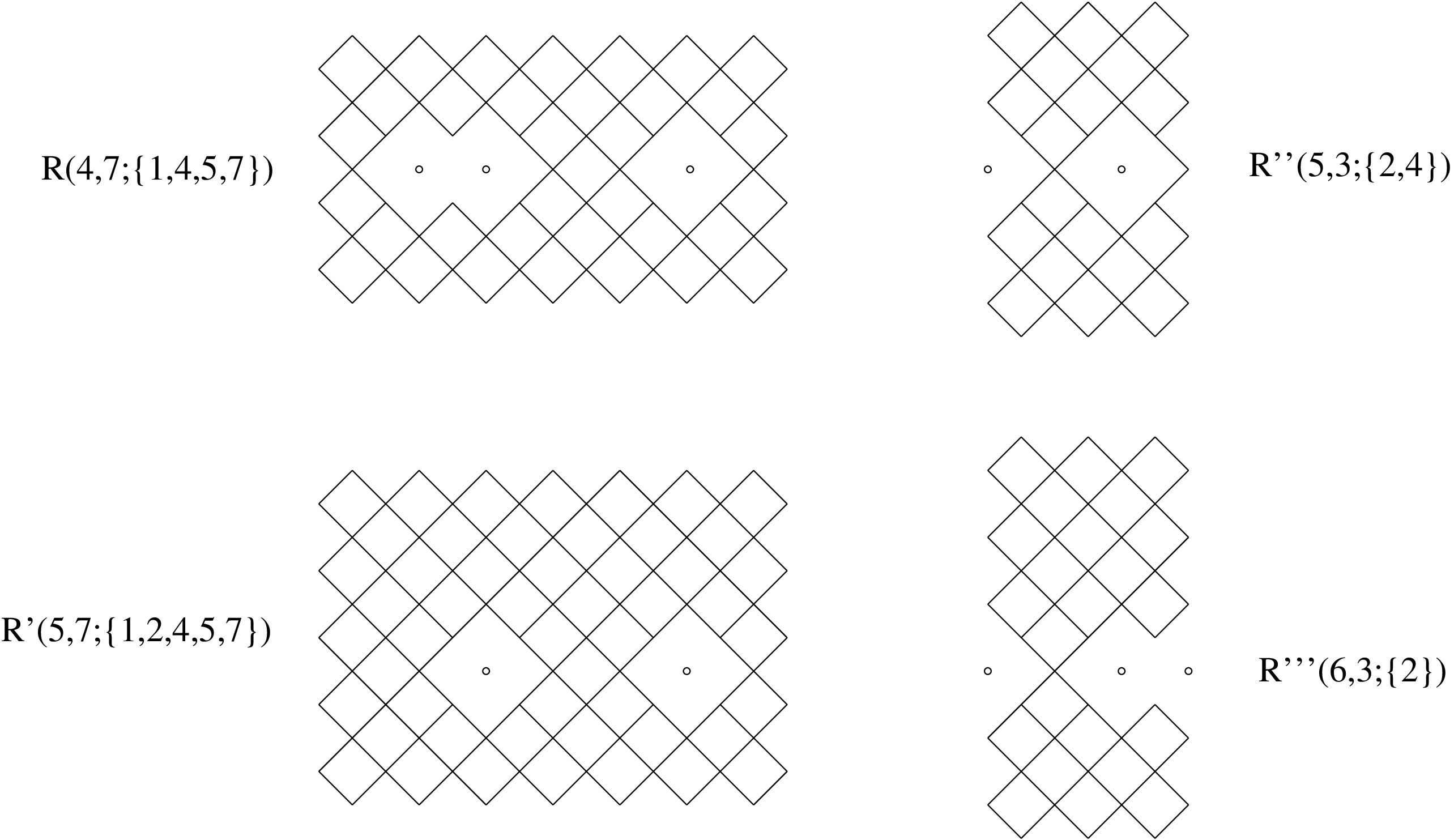}}
\hfill
}
\vskip0.1in
\caption{ Four families of graphs obtained from Aztec rectangles $AR_{m,n}$ by deleting some vertices from the horizontal symmetry axis, or from the row of vertices just below it.
\newline
{\it Left: The case $m<n$ $($height $<$ width$)$.} An example with $m$ even (the graph $R(4,7;\{1,4,5,7\})$, top left), and one with $m$ odd (the graph $R'(5,7;\{1,2,4,5,7\})$, bottom left).
\newline
{\it Right: The case $m>n$ $($height $>$ width$)$.} An example with $m$ odd (the graph $R''(5,3;\{2,4\})$, top right), and one with $m$ even (the graph $R'''(6,3;\{2\})$, bottom right).}
\vskip-0.1in
\label{fbaa}
\end{figure}

Our results in the next section are expressed in terms of the number of perfect matchings of certain graphs obtained from Aztec rectangles by deleting some vertices from their horizontal symmetry axis, or from the row of vertices just below it. There are four such families, which we describe next.

Let $AR_{m,n}$ be the Aztec rectangle graph of height $m$ and width $n$, and color the vertices in its bipartition classes black and white so that the topmost vertices are white. One readily sees that for $m<n$, $AR_{m,n}$ has an excess of $n-m$ white vertices, while for $m>n$ there are $m-n$ more black vertices than white ones. As any perfect matching contains the same number of black and white vertices, we restore the balance by deleting the required number of vertices of the majority color. More precisely, all the deleted vertices will be chosen from along the horizontal symmetry axis $\ell$, if the common color of the vertices on $\ell$ is the majority color (this happens when $m<n$ and $m$ is even, and when $m>n$ and $m$ is odd); otherwise we choose them from the row of vertices just below $\ell$ (these have opposite color to the vertices on $\ell$, so the balancing works out this way).

The detailed definitions are as follows.
Let $m$ and $n$ be positive integers with $m<n$ and~$m$ even.
Then $AR_{m,n}$ has $n$ white vertices on $\ell$, and since $n-m$ of them need to be deleted, $m$ are left.
Label the vertices on $\ell$ from left to right by consecutive integers starting with 1.
Given any subset $T\subset[n]$ with $m$ elements, denote by $R(m,n;T)$ the graph obtained from the Aztec rectangle $AR_{m,n}$ by deleting all the vertices on $\ell$ whose labels are not in $T$; an example is shown on the top left in Figure \ref{fbaa}.

For $m<n$ and $m$ odd, the right color of vertices to delete in order to restore balance is not the one found on $\ell$, but along the line $\ell'$ obtained by translating $\ell$ one unit southeast. Therefore, in this case we label the vertices of  $AR_{m,n}$ that are on $\ell'$ from left to right by $1,2,\dotsc,n$, and for a subset $T\subset[n]$ with $m$ elements we denote by $R'(m,n;T)$ the graph obtained from $AR_{m,n}$ by deleting all the vertices on $\ell'$ whose labels are not in $T$ (see the picture on the bottom left in Figure \ref{fbaa} for an example).

In the case $m>n$ the regions are defined analogously, but the pairing of the parities of $m$ with which of $\ell$ or $\ell'$ to choose to delete vertices from switches. Namely, in this case it is for $m$ odd that the vertices on $\ell$ have the majority color. Therefore, if $m>n$ and $m$ is odd, we delete vertices from $\ell$ to restore the balance. Since in this case $AR_{m,n}$ has $n+1$ vertices on $\ell$, after deleting $m-n$ of them we are left with $2n+1-m$. 
Thus, given $m>n$, $m$ odd, and a subset $T\subset[n+1]$ with $2n+1-m$ elements, we denote by $R''(m,n;T)$ the graph obtained from the Aztec rectangle $AR_{m,n}$ by deleting all the vertices on $\ell$ whose labels are not in $T$ (an example is shown on the top right in Figure \ref{fbaa}). The fourth family corresponds to the case when  $m>n$ and $m$ is even: for such $m$ and $n$,
given a subset $T\subset[n+1]$ with $2n+1-m$ elements, we denote by $R'''(m,n;T)$ the graph obtained from  $AR_{m,n}$ by deleting all the vertices on $\ell'$ whose labels are not in $T$ (see the picture on the bottom right in Figure \ref{fbaa} for an example). 

A product formula for the number of perfect matchings of the graph $R(m,n;T)$ was given in our earlier work \cite{FT} (see Theorem 4.1 there). The perfect matchings of the $R'$, $R''$ and $R'''$ families were enumerated by Krattenthaler in \cite{Kratt} (see Theorems 8, 9 and 10 there); the formula for $\M(R'(m,n;T))$ was also found independently by Helfgott and Gessel \cite[Proposition 8]{HG}. Combinatorial proofs of \cite[Theorem 9 and 10]{Kratt} are presented in \cite[Theorems 7.1 and 7.2]{nesy}. A combinatorial proof of \cite[Theorem 8]{Kratt} follows as a direct consequence of \cite[Theorem 1.2 and equation (4.4)]{FT}.
For ease of reference, we state these formulas below.

\parindent=0pt

\begin{theo}
\label{tbaa}
$(${\rm a}$)$ Let $T=\{t_1,\dotsc,t_m\}\subset[n]$ have its elements listed in increasing order. 

For $m<n$ and $m$ even, we have
\begin{equation}
\M(R(m,n;T))=\frac{2^{m(m+4)/4}}{(0!\,1!\,\cdots\,(m/2-1)!)^2}
\prod_{1\leq i<j\leq m/2}(t_{2j-1}-t_{2i-1})(t_{2j}-t_{2i}),
\label{ebax}
\end{equation}
while for $m<n$ and $m$ odd, 
\begin{align}
&\M(R'(m,n;T))=\frac{2^{(m^2+4m-1)/4}}{\prod_{i=1}^{(m-1)/2}(i-1)!\prod_{i=1}^{(m+1)/2}(i-1)!}
\nonumber
\\[10pt]  
&\ \ \ \ \ \ \ \ \ \ \ \ \ \ \ \ \ \ \ \ \ \ \ \  \ \ \ \ \ \ \ \  
\times
\prod_{1\leq i<j\leq(m-1)/2}(t_{2j}-t_{2i})
\prod_{1\leq i<j\leq(m+1)/2}(t_{2j-1}-t_{2i-1}).
\label{ebay}
\end{align}

$(${\rm b}$)$ Let $T=\{t_1,\dotsc,t_{2n+1-m}\}\subset[n+1]$ have its elements listed in increasing order.

For $m>n$ and $m$ odd, we have
\begin{align}
  &\M(R''(m,n;T))=2^{(m^2-2m+1)/4+n}
\frac{\prod_{i=(m+3)/2}^{n+1}(i-1)!^2}
{\prod_{i=1}^{2n+1-m}(t_i-1)!\,(n+1-t_i)!}
\nonumber
\\[10pt]  
&\ \ \ \ \ \ \ \ \ \ \ \ \ \ \ \ \ \ \ \ \ \ \ \  \ \ \ \ \ \ \ \  
\times
\prod_{1\leq i<j\leq n-(m-1)/2}(t_{2j}-t_{2i})
\prod_{1\leq i<j\leq n-(m-1)/2}(t_{2j-1}-t_{2i-1}),
\label{ebaz}
\end{align}
while for $m>n$ and $m$ even, 
\begin{align}
  &\M(R'''(m,n;T))=2^{(m^2-2m)/4+n}
\prod_{i=(m+2)/2}^{n+1}(i-1)!
\prod_{i=(m+4)/2}^{n+1}(i-1)!
\nonumber
\\[10pt]  
&\ \ \ \ \ \ \ \ \ \ \ \ \ \ \ \ \ \ \ \ \ \ \ \  \ \ \ \ \ \ \ \  
\times
\frac
{\prod_{1\leq i<j\leq n-m/2}(t_{2j}-t_{2i})
 \prod_{1\leq i<j\leq n-m/2+1}(t_{2j-1}-t_{2i-1})}
{\prod_{i=1}^{2n+1-m}(t_i-1)!\,(n+1-t_i)!}.
\label{ebaw}
\end{align}

\end{theo}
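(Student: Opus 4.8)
The plan is to prove all four formulas by the classical route of encoding perfect matchings as families of non-intersecting lattice paths and then evaluating the resulting determinant, treating the four cases in parallel. First I would invoke the standard bijection (descending from the lattice-path encoding of domino tilings of the ordinary Aztec diamond) between perfect matchings of an Aztec rectangle graph and families of non-intersecting monotone lattice paths on $\Z^2$. Under this encoding, each \emph{undeleted} vertex of the distinguished line --- the symmetry axis $\ell$ in the $R$ and $R''$ cases, and the shifted line $\ell'$ in the $R'$ and $R'''$ cases --- forces a local path segment, so that a matching of $R(m,n;T)$ (and analogously for the primed families) corresponds to a path family whose free endpoints are dictated by the label set $T$. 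The parity of $m$ and the comparison between $m$ and $n$ only determine which line the deletions sit on and how many sources and sinks there are; otherwise the four cases run along identical lines.

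Next I would apply the Lindström--Gessel--Viennot lemma. Once the source set $\{A_1,\dots,A_r\}$ and the sink set $\{B_1,\dots,B_r\}$ are pinned down (with $r=m/2$, $(m\pm1)/2$, etc., according to the case, and the $B_j$ placed at the coordinates determined by $t_j$), one gets $\M=\det(M)$, where $M_{ij}$ is the number of monotone paths from $A_i$ to $B_j$, a single binomial coefficient of the shape $\binom{c+t_j}{d_i}$ with $c$ and $d_i$ linear in the case parameters. Because the geometry of the Aztec rectangle forces all competing path families to intersect, the signed LGV sum collapses to its leading term, so this determinant genuinely equals the matching count.

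The decisive feature is then that $M$ decouples according to the parity of the endpoint label: the odd-indexed sinks $t_{2i-1}$ and the even-indexed sinks $t_{2i}$ interact only within their own class, so $\det M$ factors as a product of two smaller determinants. This is exactly the source of the two Vandermonde-type pieces $\prod_{i<j}(t_{2j-1}-t_{2i-1})$ and $\prod_{i<j}(t_{2j}-t_{2i})$ appearing in the statement (rather than a single Vandermonde over all of $T$). Each block is a determinant of binomials $\binom{t_j+a}{b_i}$, which reduces by elementary column operations to a Vandermonde $\det(x_i^{j-1})$ and evaluates to a product of differences times a ratio of factorials; for $R$ and $R'$ the factorials collapse into the leading constant, whereas for $R''$ and $R'''$ they survive as the denominators $\prod_i (t_i-1)!\,(n+1-t_i)!$. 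The remaining powers of $2$ and products of factorials are harvested by tracking the constant multiplicative contributions in the path counts across the region.

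The bijection and the LGV reduction are routine; the real labor --- and the step I expect to be the main obstacle --- is the determinant evaluation: establishing the parity decoupling rigorously rather than merely heuristically, carrying each block to closed form, and, above all, keeping the power-of-$2$ and factorial prefactors correct across the four cases, whose differing parities shift indices by one and switch which parity class carries the extra endpoint. Since the four formulas differ only through these index shifts, I would carry out the $m<n$, $m$ even case ($R$) in full and then indicate the (purely bookkeeping) modifications needed for $R'$, $R''$ and $R'''$.
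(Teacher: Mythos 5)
Note first what you are up against: the paper itself contains no proof of Theorem \ref{tbaa}. It is stated ``for ease of reference,'' with \eqref{ebax} quoted from \cite[Theorem 4.1]{FT} and \eqref{ebay}--\eqref{ebaw} from \cite[Theorems 8--10]{Kratt} (the $R'$ case also in \cite[Proposition 8]{HG}, with combinatorial proofs of the $R''$, $R'''$ cases in \cite{nesy}). Measured against those proofs, your proposal has a genuine gap at exactly the step you yourself flag as decisive, and the step fails outright as formulated. If $\M(R(m,n;T))=\det M$ with the $j$th column of $M$ depending on $t_j$ alone (one family of nonintersecting paths, sinks at coordinates determined by the $t_j$), then the determinant vanishes whenever two of the $t_j$ coincide, hence, as a polynomial in $t_1,\dotsc,t_m$, is divisible by the \emph{full} Vandermonde $\prod_{1\leq i<j\leq m}(t_j-t_i)$. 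But the right-hand side of \eqref{ebax} contains no cross-parity differences $t_{2j}-t_{2i-1}$ at all (it does not vanish when, say, $t_2=t_1$ formally), so the matching number simply is not such a determinant, and no column operations can produce your ``parity decoupling.'' Nor can the matrix be block-diagonal up to permutation: whether a given dent is odd- or even-indexed depends on the whole set $T$, not on its location, so no individual path count can vanish for parity reasons. The odd/even split has a structural origin your plan is missing: in \cite{FT} the graph $R(m,n;T)$ is symmetric about $\ell$, and the Matchings Factorization Theorem cuts it along the axis, allocating the surviving axis vertices \emph{alternately} to the two halves; each half is a dented Aztec-rectangle-type region counted by the classical one-Vandermonde formula of shape $2^{*}\prod_{i<j}(t_j-t_i)/(j-i)$, and multiplying the halves (with the weight-$1/2$ bookkeeping on the cut edges, which is where the $2$-power constants come from) is what produces the two parity-restricted products. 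Worse for your uniform four-case treatment: for $R'$ and $R'''$ the deleted vertices lie on $\ell'$, the graph is not reflectively symmetric, and even this mechanism does not apply verbatim --- Krattenthaler proves those cases via nonintersecting paths plus nontrivial Schur function identities. The factorization is an identity to be \emph{proved}, not a formal feature of an LGV matrix.

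A secondary but substantive error: for domino tilings the LGV entries are not single binomial coefficients. The standard path encoding of Aztec-type regions uses Schr\"oder/Delannoy-type paths (diagonal steps plus a level step), so the number of paths between fixed endpoints is a Delannoy-type sum $\sum_k\binom{a}{k}\binom{b}{k}2^k$, not $\binom{c+t_j}{d_i}$; evaluating determinants of such entries is precisely the hard content of \cite{HG}, not ``elementary column operations,'' and the powers of $2$ are entangled inside the entries rather than being a constant prefactor to harvest at the end. To salvage your plan you would have to follow one of the two routes actually taken in the literature: prove the reflective-symmetry factorization first and apply it to the symmetric cases $R$ and $R''$ (with a separate argument, as in \cite[Theorem 1.2 and equation (4.4)]{FT}, for the nonsymmetric $R'$), or convert to path families whose generating functions are Schur specializations and establish the requisite factorization identities as in \cite{Kratt}.
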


\parindent=15pt

\section{Aztec rectangles with odd Aztec windows}

The regions introduced in this section are obtained from Aztec rectangle regions $AR_{m,n}$ by making in them holes (or windows) in the shape of odd Aztec rectangles of some common height, lined up horizontally. The details of their definition --- and the geometric look of the resulting regions --- depend on whether $m<n$ or $m>n$, and whether $m$ is even or odd. This gives rise to four families of regions.

Suppose $m$ is even. Consider integers $a_1,\dotsc,a_s\geq0$. Set $a=\sum_{i=1}^s(a_i+1)$, and draw the Aztec rectangle region $AR_{m,m+a}$ in the plane so that its long symmetry axis $\ell$ is horizontal. Label its unit squares on $\ell$ by $1,2,\dotsc,m+a$, from left to right (see Figure \ref{fba}); regard these as coordinates on $\ell$, viewed as the number line. We call the boundary of this $AR_{m,m+a}$ the {\it reference frame} (or simply the frame; it is shown in green in Figure \ref{fba}).

Let $k\leq\min(a_1,\dotsc,a_s)$ be a positive integer, and start with a larger Aztec rectangle region $AR_{m+k,m+k+a}$ centered around the frame (this is traced by the outer boundary in Figure \ref{fba}). 
As discussed in the third paragraph of Section 2, if the unit squares in this region are colored chessboard style so that the top unit squares are white, there is an excess of $a$ white unit squares. We restore the balance as follows.
For each $i=1,\dotsc,s$, make a hole (or window) in the shape of $O_{k,a_i-k}$, placed so that its center has coordinate $c_i$ on $\ell$ --- if $O_{k,a_i-k}$ has an odd number of unit squares along its horizontal symmetry axis (which happens iff $a_i-k$ is even), then $c_i$ will be an integer; otherwise, $c_i$ will be $1/2$ plus an integer.
Call the resulting balanced\footnote{As we mentioned, the number of white squares minus the number of black squares in the region $AR_{m+k,m+k+a}$ equals $a$. On the other hand, for the region $O_{k,a_i-k}$ this difference is $\pm(a_i+1)$, depending on whether the top unit squares of $O_{k,a_i-k}$ are white or black (the plus sign corresponding to white). It is not hard to check that, since $m$ is even, placing $O_{k,a_i-k}$ symmetrically across $\ell$ makes this difference equal to  $a_i+1$ for all $k$, so that jointly the $O_{k,a_i-k}$'s will have $a=\sum_i(a_1+1)$ more white squares than black ones.}
 region an {\it Aztec rectangle with odd windows}, and denote it by $AR_{m+k,m+k+a}\setminus O_{k,a_1-k}^{(c_1)}\cup\cdots\cup O_{k,a_s-k}^{(c_s)}$; an example is shown in Figure \ref{fba}.

\begin{figure}[t]
\centerline{
\hfill
{\includegraphics[width=0.75\textwidth]{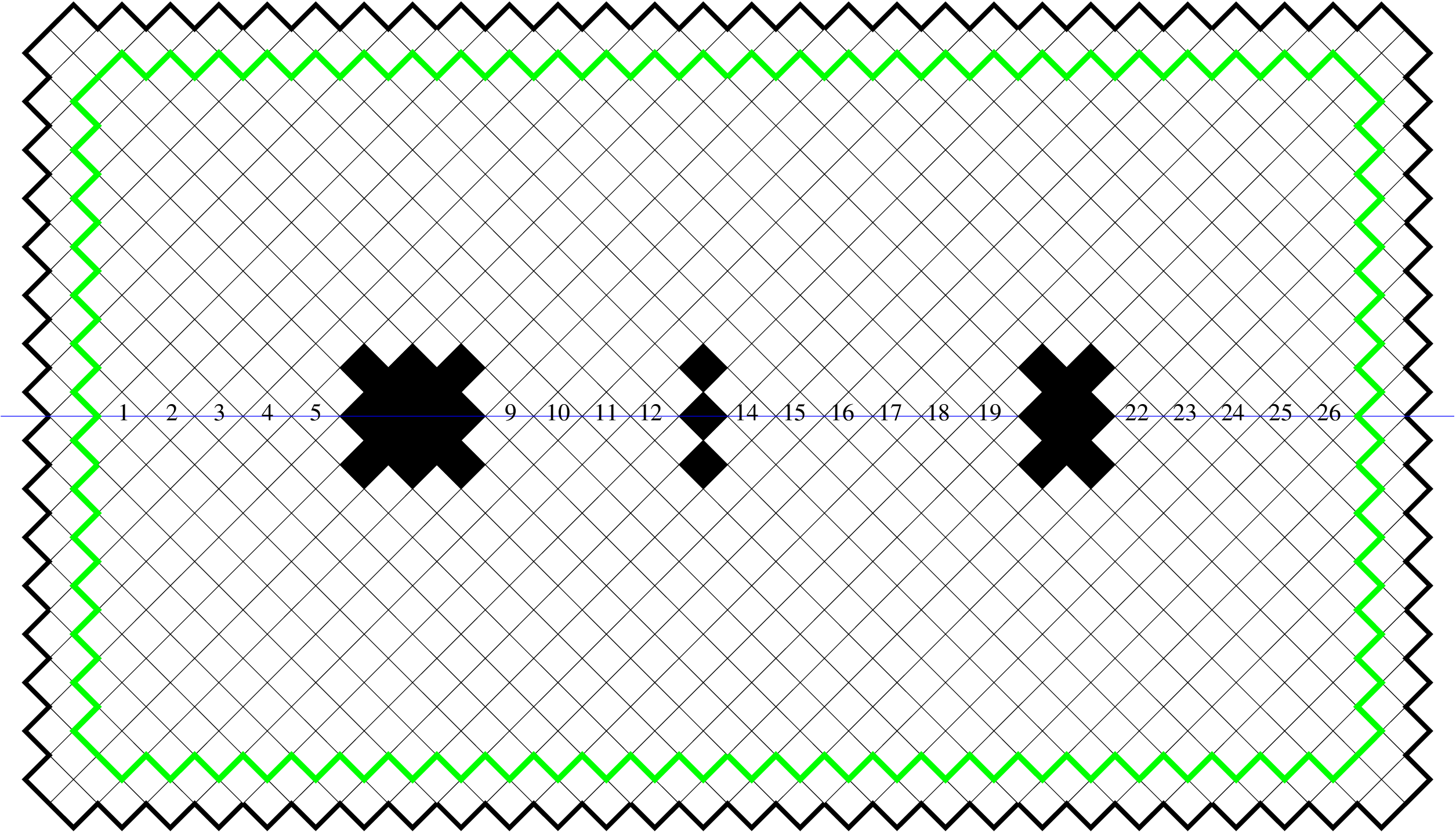}}
\hfill
}
\vskip0.1in
\caption{ The case {\it height} $<$ {\it width} and the region is symmetric (equivalently, the frame height is even). In this example, the frame (shown in green) is $AR_{m,m+a}$ with $m=14$, $a=12$, and the region $AR_{m+k,m+k+a}\setminus O_{k,a_1-k}^{(c_1)}\cup\cdots\cup O_{k,a_s-k}^{(c_s)}$ (whose outer boundary is outlined in black) has the $s=3$ windows $O_{2,2}$, $O_{2,0}$ and $O_{2,1}$ (so $k=2$, $a_1=4$, $a_2=2$, $a_3=3$), placed so that their centers $c_1,\dotsc,c_s$ are $c_1=7$, $c_2=13$, $c_3=20\frac12$ (by definition, $a=\sum_{i=1}^s(a_i+1)$).}
\vskip-0.1in
\label{fba}
\end{figure}


Our first main result gives a simple product formula for the number of domino tilings of this family of Aztec rectangles with odd windows (height $<$ width, region is symmetric) in terms of formula \eqref{ebax}.
  
\begin{theo}
\label{tba}
Let $m$ be even, and let $k$ be a positive integer with $k\leq\underset{i}{\min}\, a_i$. Then we have
\begin{align}
&
\M\left(AR_{m+k,m+k+a}\setminus O_{k,a_1-k}^{(c_1)}\cup\cdots\cup O_{k,a_s-k}^{(c_s)}\right)
\nonumber
\\
&\ \ \ \ \ \ \ \ \ \ \ \ \ \ \ \ \ \ \ \ \ \ \ \ \ \ \ \ \ \ \ \ \ \ \ \
=
2^{{k+1\choose2}(s+1)+km}
\M\left(AR_{m,m+a}\setminus O_{0,a_1}^{(c_1)}\cup\cdots\cup O_{0,a_s}^{(c_s)}\right)
\nonumber
\\
&\ \ \ \ \ \ \ \ \ \ \ \ \ \ \ \ \ \ \ \ \ \ \ \ \ \ \ \ \ \ \ \ \ \ \ \
=2^{{k+1\choose2}(s+1)+km}
\M\left(R(m,m+a;A_1^{(c_1)}\cup\cdots\cup A_s^{(c_s)})\right),
\label{ebb}
\end{align}
where $A_i^{(c_i)}$ is the set of $a_i+1$ consecutive integers\footnote{ By construction, the graph $R(m,m+a;A_1^{(c_1)}\cup\cdots\cup A_s^{(c_s)})$ is balanced (i.e., has the same number of black and white vertices when its vertices are colored in a chessboard fashion) if and only if the $A_i^{(c_i)}$'s are mutually disjoint; otherwise it has no perfect matching (as each edge in a perfect matching has one white and one black endpoint).  We will assume therefore, for this case as well as for the remaining three cases, that this condition holds.} centered at $c_i$, and the graph $R(m,n;T)$ is defined in Section $2$.
\end{theo}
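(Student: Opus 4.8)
The plan is to prove the two equalities in \eqref{ebb} separately, the second being essentially a dictionary between regions and graphs, and the first carrying the combinatorial content. For the second equality I would pass from the region $AR_{m,m+a}\setminus O_{0,a_1}^{(c_1)}\cup\cdots\cup O_{0,a_s}^{(c_s)}$ to its matching graph. Since the windows have height $0$, each $O_{0,a_i}^{(c_i)}$ is a horizontal run of cells along $\ell$ centered at $c_i$; removing it together with the dominoes that are forced along its boundary leaves, after deletion of the forced edges and their endpoints, the Aztec rectangle graph $AR_{m,m+a}$ with a prescribed collinear set of vertices of the majority color removed from $\ell$. Reading off the labels, the retained vertices on $\ell$ are exactly those indexed by $A_1^{(c_1)}\cup\cdots\cup A_s^{(c_s)}$ (the apparent swap between removed cells and retained labels being produced by the forcing around the odd strips), so the resulting graph is $R(m,m+a;A_1^{(c_1)}\cup\cdots\cup A_s^{(c_s)})$ in the notation of Section~2. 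The disjointness hypothesis recorded in the footnote is precisely the balance condition making this graph matchable.

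The first equality is where I would invoke the complementation theorem for cellular graphs of \cite{CT}. The idea is to regard $AR_{m+k,m+k+a}\setminus O_{k,a_1-k}^{(c_1)}\cup\cdots\cup O_{k,a_s-k}^{(c_s)}$ as a cellular graph and to peel away the outer rings of the frame while simultaneously shrinking each odd window in height, the frame contracting from $AR_{m+k,m+k+a}$ down to $AR_{m,m+a}$ and each window from $O_{k,a_i-k}^{(c_i)}$ down to $O_{0,a_i}^{(c_i)}$. I would carry this out by induction on $k$, with the parameters $m,a,s,(a_i),(c_i)$ held fixed: one application of the complementation theorem reduces the common window height from $j$ to $j-1$ (contracting the frame by one ring), and supplies a multiplicative factor that is a power of $2$ determined by the number of complemented cells. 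Tracking this factor, the step from height $j$ to $j-1$ should contribute $2^{\,j(s+1)+m}$. Summing over $j=1,\dots,k$ gives $\sum_{j=1}^{k}\bigl(j(s+1)+m\bigr)=\binom{k+1}{2}(s+1)+km$, exactly the exponent in \eqref{ebb}; chaining these $k$ reductions with the base identification of the previous paragraph then yields both equalities together. The explicit product formula of \eqref{ebax} is not needed for this argument; it becomes relevant only when one wants a closed form for the right-hand side.

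The hard part will be setting up the cellular structure so that \cite{CT} applies uniformly at every stage, and pinning down the exact power of $2$. Two points need care. First, the windows and the frame must be presented as a single cellular graph whose cells are arranged so that the hypotheses of the complementation theorem hold for each $j$ from $k$ down to $1$; near each center $c_i$ the parity of $a_i-k$ (equivalently, whether $c_i$ is an integer or a half-integer) governs how the window meets $\ell$, and one must verify that the contraction proceeds identically in both parities so that the factor $2^{\,j(s+1)+m}$ is insensitive to it. Second, one must confirm that the windows never collide with one another or with the frame during the contraction — guaranteed by $k\le\min_i a_i$ together with the disjointness of the $A_i^{(c_i)}$ — since such a collision would destroy the cellular decomposition. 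Once the decomposition is checked to be valid at every height, the factor at each step is forced and the telescoping reduces to the bookkeeping displayed above.
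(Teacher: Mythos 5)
Your route for the first equality is the paper's own: relate the two regions by $k$ successive applications of the complementation theorem of \cite{CT}, with the outer boundary and the windows evolving in lockstep and the powers of $2$ telescoping to $\binom{k+1}{2}(s+1)+km$; running the recursion downward from $AR_{m+k,m+k+a}$ rather than upward from the $R$-graph is immaterial, since complementation pairs $H$ and $H'$ symmetrically. However, the one step carrying all the quantitative content --- that the passage between window height $j$ and $j-1$ contributes exactly $2^{\,j(s+1)+m}$ --- is asserted (``should contribute''), not derived, and your gloss that the factor is ``determined by the number of complemented cells'' misstates the theorem: the exponent is $\sum_L\tau(L)$ over a partition of the cells of the cellular completion into paths, where $\tau(L)$ is one less than the number of closed ends of $L$. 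The actual count is short: partitioning into horizontal paths, the $m$ rows of cells above and below the central band each form one uninterrupted path of type $-1$, while at the $j$th application each of the $j$ central rows is cut by the $s$ windows into $s+1$ paths, again each of type $-1$, giving exponent $-m-j(s+1)$ in the growing direction, i.e.\ your factor $2^{m+j(s+1)}$. With this in hand, your first announced worry dissolves --- the parity of $a_i-k$ (integer versus half-integer $c_i$) never enters, as only the number of paths per row matters --- but a point you omit is essential: at each application the shading must be switched to the other chessboard class of $4$-cycles, and it is this switch that makes the frame grow while each window gains height and loses width; the hypothesis $k\le\min_i a_i$ is exactly what prevents a window from ``turning inside out,'' after which the next switch would produce a graph not embedded in the grid.

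Your dictionary for the second equality is wrong in mechanism. Removing the flat windows $O_{0,a_i}^{(c_i)}$ --- runs of $a_i+1$ unit squares along $\ell$ --- forces no dominoes whatsoever, and the planar dual of the holed region is, directly by definition, the graph $AR_{m,m+a}$ with the vertices labeled by $A_1^{(c_1)}\cup\cdots\cup A_s^{(c_s)}$ \emph{deleted}; no ``swap produced by forcing'' occurs. Indeed your version, with the retained vertices on $\ell$ being those indexed by $\bigcup_i A_i^{(c_i)}$, cannot be right: balance requires deleting exactly $a=\sum_i(a_i+1)$ of the $m+a$ vertices on $\ell$, whereas retaining the $a$ labels in $\bigcup_i A_i^{(c_i)}$ deletes $m$ of them, leaving an unbalanced (hence matchless) graph whenever $m\neq a$. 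There is a genuine notational wrinkle you may have been reacting to --- in Section 2 the set $T$ in $R(m,n;T)$ lists the retained labels, while in the theorem the union of the $A_i$'s, of cardinality $a$, plays the role of the deleted set, as the footnote's balance count confirms --- but it is resolved by reading the notation correctly, not by inventing tiling-theoretic forcing. Once this is fixed and the per-step exponent is computed as above, your telescoping bookkeeping is correct and coincides with the paper's proof.
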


Note that the second equality follows by definition, since the odd Aztec rectangle region $O_{0,a_i}^{(c_i)}$ consists of $a_i+1$ consecutive unit squares, whose labels form a run of $a_i$ consecutive integers centered at~$c_i$.

The way to use Theorem \ref{tba} is as follows. Given an Aztec rectangle region with odd windows that belongs to this family (height $<$ width, windows symmetric about $\ell$), read off first the value of $k$ --- it is one unit less than the common height of the odd Aztec windows. Then proceed inwards from the outer boundary, at each step tracing the boundary of an Aztec rectangular region with both dimensions decremented by one unit. This yields at the $k$th step the reference frame --- which is the Aztec rectangle $AR_{m,m+a}$. Label the unit squares on the horizontal symmetry axis from left to right by consecutive integers, starting with 1 for the unit square just inside the frame. Interpret the $R$-graph on the third line in \eqref{ebb} as referring to this labeling of the vertices on the horizontal symmetry axis of the graph $AR_{m,m+a}$, and use formula \eqref{ebax}.


Things change in the above discussion if $m$ is odd, in that the balancing of the region $AR_{m+k,m+k+a}$ cannot be achieved by placing the windows $O_{k,a_i-k}$ symmetrically about the horizontal symmetry axis $\ell$; however, the balancing {\it is} achieved if instead we place them so that they are symmetric about $\ell'$, the translation of $\ell$ one unit southeast.

\begin{figure}[t]
\centerline{
\hfill
{\includegraphics[width=0.75\textwidth]{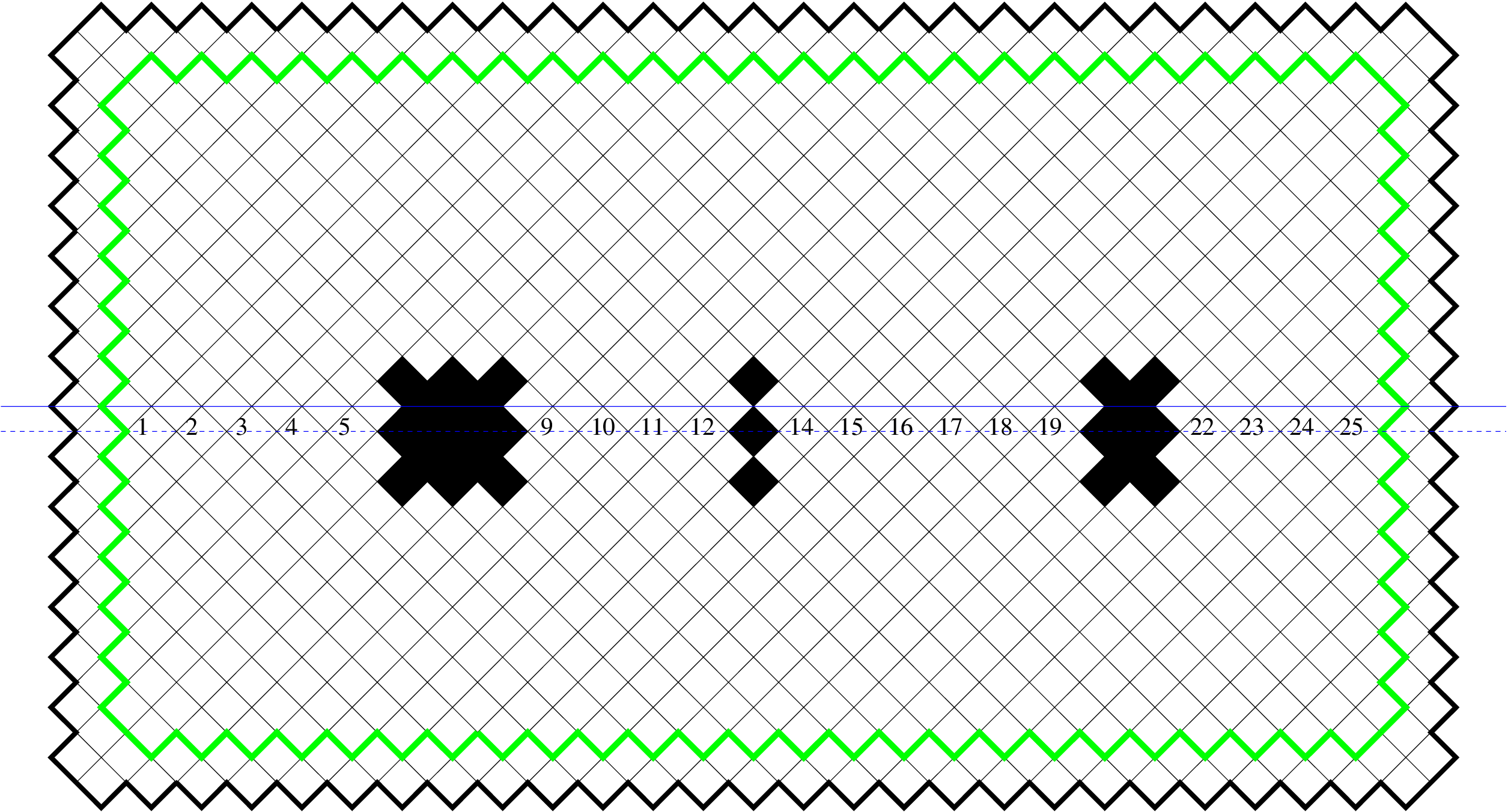}}
\hfill
}
\vskip0.1in
\caption{ The case {\it height} $<$ {\it width} and the region is nearly symmetric (equivalently, the frame height is odd). In this example, the frame (shown in green) is $AR_{m,m+a}$ with $m=13$, $a=12$, and, except for $m$, all the parameters in the region $AR'_{m+k,m+k+a}\setminus O_{k,a_1-k}^{(c_1)}\cup\cdots\cup O_{k,a_s-k}^{(c_s)}$ displayed here are the same as in Figure \ref{fba}.}
\vskip-0.1in
\label{fbb}
\end{figure}

Because of this, for $m$ odd, we define the Aztec rectangles with odd windows by making a small modification in the above definition. Namely, label by $1,2,\dotsc$ the unit squares {\it on $\ell'$} (and not on the symmetry axis $\ell$), starting with the one just inside the frame, and place the odd windows symmetrically about $\ell'$. Apart from this change, the rest of the definition is exactly the same as for $m$ even. We denote the resulting region by\footnote{ Strictly speaking, one can tell just from the indices of an Aztec rectangle with odd windows if the $m$-parameter is even or odd, as $m=(m+k)-k$, $m+k$ is the height of the outer boundary, and $k$ is the first index of each odd-Aztec-rectangle-shaped hole. 
However, in order to stress the fact that geometrically the regions look different for $m$ even or odd (for instance, the regions are symmetric in the former case, but only nearly-symmetric in the latter), we include a prime symbol in the notation of the latter.} $AR'_{m+k,m+k+a}\setminus O_{k,a_1-k}^{(c_1)}\cup\cdots\cup O_{k,a_s-k}^{(c_s)}$ (see Figure \ref{fbb} for an example).

The corresponding result is stated below.


\begin{theo}
\label{tbb}
Let $m$ be odd, and let $k$ be a positive integer with $k\leq\underset{i}{\min a_i}$. Then we have
\begin{align}
&
\M\left(AR'_{m+k,m+k+a}\setminus O_{k,a_1-k}^{(c_1)}\cup\cdots\cup O_{k,a_s-k}^{(c_s)}\right)
\nonumber
\\
&\ \ \ \ \ \ \ \ \ \ \ \ \ \ \ \ \ \ \ \ \ \ \ \ \ \ \ \ \ \ \ \ \ \ \ \
=
2^{{k+1\choose2}(s+1)+km}
\M\left(AR'_{m,m+a}\setminus O_{0,a_1}^{(c_1)}\cup\cdots\cup O_{0,a_s}^{(c_s)}\right)
\nonumber
\\
&\ \ \ \ \ \ \ \ \ \ \ \ \ \ \ \ \ \ \ \ \ \ \ \ \ \ \ \ \ \ \ \ \ \ \ \
=2^{{k+1\choose2}(s+1)+km}
\M\left(R'(m,m+a;A_1^{(c_1)}\cup\cdots\cup A_s^{(c_s)})\right),
\label{ebd}
\end{align}
where $A_i^{(c_i)}$ is the set of $a_i+1$ consecutive integers centered at $c_i$, and the graph $R'(m,n;T)$ is defined in Section $2$.
\end{theo}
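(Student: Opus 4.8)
The plan is to prove the first equality in \eqref{ebd} and to observe that the second is immediate from the definitions. Indeed, for $m$ odd the unit squares are labeled along $\ell'$, and the point-window $O_{0,a_i}^{(c_i)}$ occupies precisely the run of $a_i+1$ consecutive unit squares on $\ell'$ centered at $c_i$, i.e.\ the label set $A_i^{(c_i)}$; hence deleting $\bigcup_i O_{0,a_i}^{(c_i)}$ from $AR'_{m,m+a}$ produces exactly the region whose planar dual is $R'(m,m+a;\bigcup_i A_i^{(c_i)})$. Passing to planar duals throughout, I would work entirely with perfect matchings of the associated cellular graphs.

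The first equality I would prove by peeling one Aztec layer at a time. Concretely, I would establish, for each integer $k\geq1$ with $k\leq\min_i a_i$, the single-step identity
\[
\M\Big(AR'_{m+k,m+k+a}\setminus{\textstyle\bigcup_{i=1}^s} O_{k,a_i-k}^{(c_i)}\Big)
=2^{k(s+1)+m}\,\M\Big(AR'_{m+k-1,m+k-1+a}\setminus{\textstyle\bigcup_{i=1}^s} O_{k-1,a_i-k+1}^{(c_i)}\Big),
\]
in which the outer frame loses its outermost layer (height $m+k\to m+k-1$) while each odd window simultaneously shrinks by one layer ($O_{k,a_i-k}\to O_{k-1,a_i-k+1}$). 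Iterating this identity down to $k=0$ produces the region $AR'_{m,m+a}\setminus\bigcup_iO_{0,a_i}^{(c_i)}$, and the exponents telescope as $\sum_{j=1}^{k}\big(j(s+1)+m\big)=\binom{k+1}{2}(s+1)+km$, which is exactly the power of $2$ asserted in \eqref{ebd}. This matches the exponent appearing in Theorem \ref{tba}, a strong indication that the peeling factor is insensitive to the parity of $m$.

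The heart of the argument is the single-step identity, which I would obtain from the complementation theorem for perfect matchings of cellular graphs of \cite{CT}. I would designate as the complemented cells the outermost width-one layer of the Aztec rectangle graph $AR'_{m+k,m+k+a}$ together with the outermost layer of each of the $s$ odd windows; the complementation operation carries the height-$(m+k)$ configuration to the height-$(m+k-1)$ configuration, and the theorem records the change in the matching count as an explicit power of $2$. Matching this to $2^{k(s+1)+m}$ is then a matter of counting the cells involved, which I expect to split as a factor $2^{k}$ from each of the $s$ windows and a factor $2^{k+m}$ from the outer frame.

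The main obstacle, and the sole essential departure from the even-$m$ case of Theorem \ref{tba}, is the bookkeeping forced by the parity of $m$: here the windows are symmetric about $\ell'$ rather than $\ell$, so the region is only nearly symmetric, and one must redo the coloring/balance check (verifying that placing $O_{k,a_i-k}$ about $\ell'$ gives the windows jointly $a=\sum_i(a_i+1)$ more white squares than black, restoring balance) as well as confirm that the boundary cell structure required by \cite{CT} applies verbatim in this shifted geometry. Since the complementation theorem is local and does not see the global near-symmetry, I expect the peeling factor to be unchanged from the even case; the work lies in setting up the cells correctly about $\ell'$ and in treating the half-integer window centers $c_i$ (those with $a_i-k$ odd) uniformly with the integer ones.
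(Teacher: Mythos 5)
Your proposal is correct and is essentially the paper's own proof: the paper also establishes Theorem \ref{tbb} by $k$ successive applications of the complementation theorem of \cite{CT}, with exactly your single-step factor and the same telescoping $\sum_{j=1}^{k}\bigl(m+j(s+1)\bigr)={k+1\choose2}(s+1)+km$, the only cosmetic difference being direction --- the paper grows the $R'$-graph $AR'_{m,m+a}\setminus O_{0,a_1}^{(c_1)}\cup\cdots\cup O_{0,a_s}^{(c_s)}$ outward to $AR'_{m+k,m+k+a}\setminus O_{k,a_1-k}^{(c_1)}\cup\cdots\cup O_{k,a_s-k}^{(c_s)}$ rather than peeling inward, which is the same complementation step read in reverse (and, for the record, the exponent is accounted for there by types of horizontal paths of cells: $m$ full rows plus $j$ hole-broken rows of $s+1$ paths each at step $j$, consistent with your split). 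Your parity worry is dispatched in the paper by a single observation rather than a recomputation: the shading that enlarges the outer boundary is the same one that makes each hole taller and narrower, and this coupling (the one crucial fact in the proof of Theorem \ref{tba}) persists verbatim for windows centered on $\ell'$, so the even-$m$ argument carries over ad litteram.
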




\begin{figure}[t]
\centerline{
\hfill
{\includegraphics[width=0.35\textwidth]{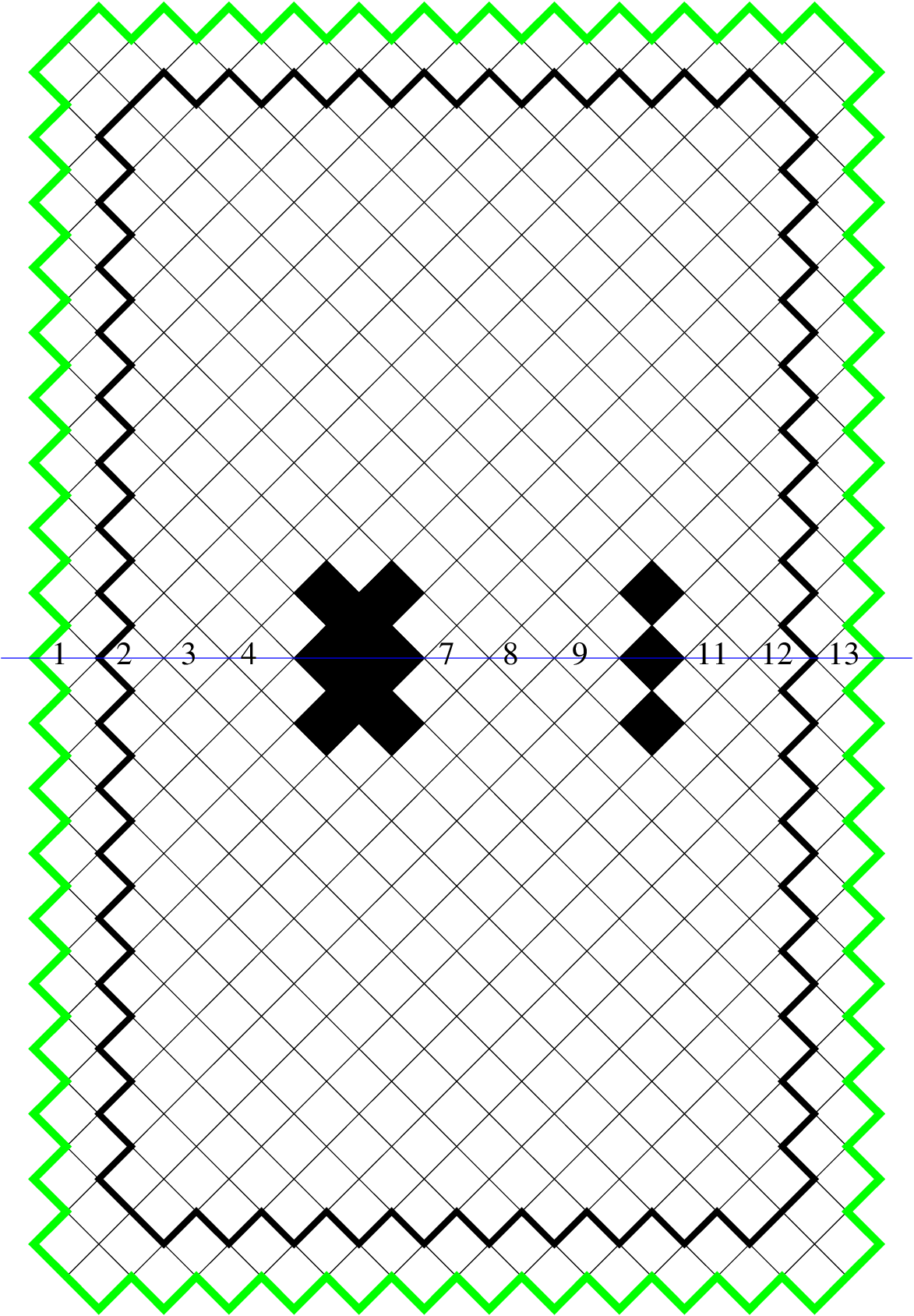}}
\hfill
{\includegraphics[width=0.325\textwidth]{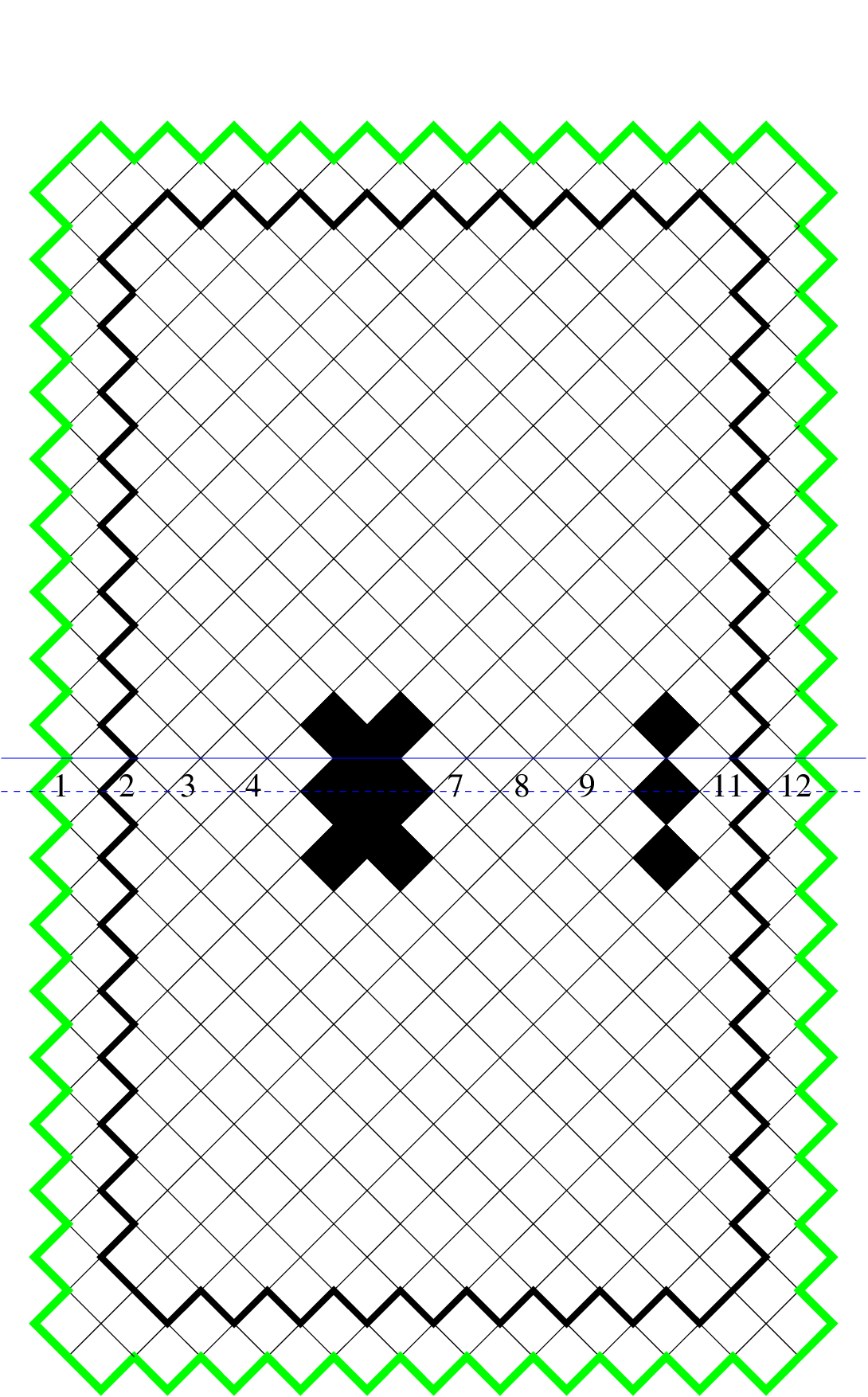}}
\hfill
}
\vskip0.1in
\caption{The case {\it height} $>$ {\it width}. {\it Left.} An instance of an Aztec rectangle region with odd windows and odd frame height: the frame (shown in green) is $AR_{m,m-a}$ with $m=19$, $a=7$, and the region is $AR''_{m-k,m-a-k}\setminus O_{k,a_1-k}^{(c_1)}\cup\cdots\cup O_{k,a_s-k}^{(c_s)}$, where the number of windows is $s=2$, their shapes $O_{k,a_1-k},\dotsc,O_{k,a_s-k}$ are $O_{2,1}$ and $O_{2,0}$ (so $k=2$, $a_1=3$, $a_2=2$), and they are placed so that their centers $c_1,\dotsc,c_s$ are $c_1=5\frac12$ and $c_2=10$. {\it Right.} An Aztec rectangle region with odd windows and even frame height: the frame (shown in green) is $AR_{m,m-a}$ with $m=18$, $a=7$, and the region is $AR'''_{m-k,m-a-k}\setminus O_{k,a_1-k}^{(c_1)}\cup\cdots\cup O_{k,a_s-k}^{(c_s)}$, all parameters except $m$ having the same values as for the region on the left (by definition, $a=\sum_{i=1}^s (a_i+1))$.} 
\vskip-0.1in
\label{fbc}
\end{figure}

The remaining two families of regions are obtained starting with Aztec rectangles in which the height is bigger than the width (see Figure \ref{fbc}). Start with the Aztec rectangle $AR_{m,m-a}$ --- this will be now the frame --- and center {\it inside} it an Aztec rectangle region $AR_{m-k,m-k-a}$. Suppose $m$ is odd. Then it is not hard to verify that we can produce a balanced region from $AR_{m-k,m-k-a}$ by making $s$ windows in the shape of the odd Aztec rectangles $O_{k,a_1-k},\dotsc,O_{k,a_s-k}$ symmetrically along the horizontal symmetry axis $\ell$. Label the unit squares along $\ell$ from left to right $1,2,\dotsc$, starting with the leftmost square that is inside the frame. We denote by $AR''_{m-k,m-a-k}\setminus O_{k,a_1-k}^{(c_1)}\cup\cdots\cup O_{k,a_s-k}^{(c_s)}$ the region obtained by placing the windows so that the center of $O_{k,a_i-k}$ has coordinate $c_i$ on $\ell$. The number of its domino tilings is given by the following formula and equation \eqref{ebaz}.

\begin{theo}
\label{tbc}
For odd $m$ and $0<k\leq\min a_i$ we have
\begin{align}
&
\M\left(AR''_{m-k,m-k-a}\setminus O_{k,a_1-k}^{(c_1)}\cup\cdots\cup O_{k,a_s-k}^{(c_s)}\right)
\nonumber
\\
&\ \ \ \ \ \ \ \ \ \ \ \ \ \ \ \ \ \ \ \ \ \ \ \ \ \ \ \ \ \ \ \ \ \ \ \
=
2^{{k+1\choose2}(s+1)-k(m+1)}
\M\left(AR''_{m,m-a}\setminus O_{0,a_1}^{(c_1)}\cup\cdots\cup O_{0,a_s}^{(c_s)}\right)
\nonumber
\\
&\ \ \ \ \ \ \ \ \ \ \ \ \ \ \ \ \ \ \ \ \ \ \ \ \ \ \ \ \ \ \ \ \ \ \ \
=2^{{k+1\choose2}(s+1)-k(m+1)}
\M\left(R''(m,m-a;A_1^{(c_1)}\cup\cdots\cup A_s^{(c_s)})\right),
\label{ebe}
\end{align}
where $A_i^{(c_i)}$ is the set of $a_i+1$ consecutive integers centered at $c_i$, and the graph $R''(m,n;S)$ is defined in Section $2$.
\end{theo}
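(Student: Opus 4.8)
The plan is to prove Theorem \ref{tbc} by reduction to the already-established complementation machinery, exactly paralleling the argument for Theorems \ref{tba} and \ref{tbb}, with the sign changes in the exponent reflecting the geometry of the case \emph{height} $>$ \emph{width}. The second equality is definitional: since $O_{0,a_i}^{(c_i)}$ is a horizontal run of $a_i+1$ consecutive unit squares centered at $c_i$, deleting its dual vertices from the symmetry axis $\ell$ of $AR_{m,m-a}$ produces exactly the graph $R''(m,m-a;A_1^{(c_1)}\cup\cdots\cup A_s^{(c_s)})$, with the labeling convention matching that of Section 2. So the content is entirely in the first equality, the power-of-$2$ reduction that peels the $k$ extra layers off the frame.

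First I would recast the regions as cellular graphs and set up the complementation theorem from \cite{CT} as the engine. The key observation is that passing from $AR_{m,m-a}$ to $AR_{m-k,m-k-a}$ shrinks \emph{both} dimensions by $k$, and that placing the windows $O_{k,a_i-k}$ (rather than the degenerate $O_{0,a_i}$) corresponds precisely to the cellular structure that the complementation theorem is designed to exploit: the odd-Aztec-rectangle hole of height $k+1$ is the ``complement'' of a solidly tileable collar region sitting between the inner boundary and the windows. I would make this explicit by identifying, layer by layer as one proceeds inward from the frame through $k$ nested Aztec rectangle boundaries, the forced or free dominoes whose removal leaves the next smaller region. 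Each such layer contributes a fixed power of $2$ independent of the window positions $c_i$, which is exactly the structure that lets the multiplicative factor $2^{\binom{k+1}{2}(s+1)-k(m+1)}$ come out as a pure power of $2$.

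The main work is then bookkeeping the exponent. I would track the power-of-$2$ factor produced at each of the $k$ peeling steps and sum the contributions. The term $\binom{k+1}{2}(s+1)=\binom{k+1}{2}s+\binom{k+1}{2}$ should split naturally into a per-window contribution $\binom{k+1}{2}$ (one for each of the $s$ holes, arising because growing each window from $O_{0,a_i}$ to $O_{k,a_i-k}$ adds a triangular stack of $1+2+\cdots+k=\binom{k+1}{2}$ layers) plus one further $\binom{k+1}{2}$ from the interaction of the nested frames with the outer boundary. The remaining term $-k(m+1)$ is where the \emph{height} $>$ \emph{width} geometry enters with the opposite sign compared to the $+km$ of Theorem \ref{tba}: here each of the $k$ layers removed is a boundary strip whose length is governed by the larger dimension $m$, and the count of forced edges along it scales with $m+1$ rather than $m$ because of the parity of the frame (odd $m$) and the fact that in the $R''$ family one deletes from $\ell$ itself, which carries $n+1=m-a+1$ vertices. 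I would verify the precise constant by checking the base case $s=1$ against formula \eqref{ebaz}, or by a direct small example such as the region on the left of Figure \ref{fbc}.

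The hard part will be getting the hypotheses of the complementation theorem of \cite{CT} to apply cleanly at every one of the $k$ nested layers simultaneously, and ensuring that the forced-domino analysis near the \emph{corners} of the Aztec rectangle (where the boundary changes direction) does not introduce position-dependent factors that would spoil the clean power-of-$2$ form. I expect the parity conditions to be the delicate point: one must confirm that for odd $m$ the windows genuinely balance when centered on $\ell$ (as asserted in the setup preceding the theorem), and that this balancing persists at each intermediate frame $AR_{m-j,m-j-a}$ for $0\le j\le k$, so that the induction on $k$ — reducing from layer $j$ to layer $j+1$ by a single application of complementation — is well-defined at every stage. Once the single-layer step is shown to contribute the constant factor $2^{\binom{k+1}{2}(s+1)/k\text{-th increment}-\,(m+1)}$ cleanly, the full formula follows by telescoping.
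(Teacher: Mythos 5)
Your top-level strategy --- pass to planar duals and apply the complementation theorem of \cite{CT} repeatedly, so that the outer boundary shrinks from $AR_{m,m-a}$ down to $AR_{m-k,m-k-a}$ while each hole evolves from $O_{0,a_i}$ to $O_{k,a_i-k}$ with fixed center, then telescope --- is exactly the paper's. But the entire content of the first equality in \eqref{ebe} is the exponent of $2$, and there your proposal has a genuine gap: the mechanism you invoke is not how the complementation theorem produces the power of $2$. There is no forced-domino analysis, no ``solidly tileable collar,'' and no corner-by-corner count of forced edges anywhere in the argument; the factor at each application is $2^t$ with $t=\sum_i\tau(L_i)$, the sum of the \emph{types} of the paths of cells in a partition of the cellular completion. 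The paper partitions the cells into horizontal paths: at the first application, of the $m$ rows of $4$-cycles containing cells, all but the central one are single uninterrupted paths of type $1$, while the central row is broken by the $s$ holes into $s+1$ paths --- the two extreme ones of type $0$, the other $s-1$ of type $-1$ --- giving $t_1=(m-1)\cdot 1+(s-1)\cdot(-1)=(m+1)-(s+1)$. At each subsequent application one type-$1$ row disappears and one more becomes a broken central row, so the exponent drops by exactly $s+1$ per step, i.e.\ $t_j=(m+1)-j(s+1)$, and summing over $j=1,\dotsc,k$ yields $k(m+1)-\binom{k+1}{2}(s+1)$, which is \eqref{ebe} read backwards. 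Your proposal never derives any per-step count: the heuristic split of $\binom{k+1}{2}(s+1)$ into per-window ``triangular stacks'' plus a frame term, and the claim that the $-k(m+1)$ comes from boundary strips whose forced edges ``scale with $m+1$,'' are arithmetic reverse-engineering of the answer, and your fallback --- verifying the constant for $s=1$ against \eqref{ebaz} or on one small example --- does not establish that the per-step factor is independent of the window positions $c_i$ and sizes $a_i$, which is precisely what the type count proves. As written, the core of the proof is missing rather than deferred.

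Two secondary points. First, the parity/balance worry you flag as ``the delicate point'' (that balance persists at each intermediate frame $AR_{m-j,m-j-a}$) needs no separate induction: the complementation theorem applies to any subgraph with the given cellular completion and preserves $\M$ up to $2^t$, so matchability of the intermediate graphs is automatic from either end; the only genuine constraint is $k\le\min_i a_i$, which prevents a hole from evolving past the degenerate shape $O_{a_i,0}$ (beyond which the next complement would not embed in the grid). Second, the theorem's scope (see Remark 4) includes configurations where the leftmost or rightmost window is flush with the outer boundary and communicates with the exterior; the paper handles this by observing that the only change in the path decomposition is the disappearance of a path of type $0$, which leaves the exponent unchanged --- a case your proposal does not address and which your forced-edge picture would not resolve.
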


The definition in the last case --- when the height is bigger than the width, and the frame height is even --- is very similar to the previous one. The only difference is that now, in order to balance the Aztec rectangle region $AR_{m-k,m-a-k}$, we need to place the odd Aztec windows symmetrically with respect to $\ell'$, so that the resulting region is only nearly-symmetric; we denote it by $AR'''_{m-k,m-k-a}\setminus O_{k,a_1-k}^{(c_1)}\cup\cdots\cup O_{k,a_s-k}^{(c_s)}$. The number of its domino tilings are given by the following result.

\begin{theo}
\label{tbd}
For even $m$ and $0<k\leq\min a_i$ we have
\begin{align}
&
\M\left(AR'''_{m-k,m-k-a}\setminus O_{k,a_1-k}^{(c_1)}\cup\cdots\cup O_{k,a_s-k}^{(c_s)}\right)
\nonumber
\\
&\ \ \ \ \ \ \ \ \ \ \ \ \ \ \ \ \ \ \ \ \ \ \ \ \ \ \ \ \ \ \ \ \ \ \ \
=
2^{{k+1\choose2}(s+1)-k(m+1)}
\M\left(AR'''_{m,m-a}\setminus O_{0,a_1}^{(c_1)}\cup\cdots\cup O_{0,a_s}^{(c_s)}\right)
\nonumber
\\
&\ \ \ \ \ \ \ \ \ \ \ \ \ \ \ \ \ \ \ \ \ \ \ \ \ \ \ \ \ \ \ \ \ \ \ \
=2^{{k+1\choose2}(s+1)-k(m+1)}
\M\left(R'''(m,m-a;A_1^{(c_1)}\cup\cdots\cup A_s^{(c_s)})\right),
\label{ebf}
\end{align}
where $A_i^{(c_i)}$ is the set of $a_i+1$ consecutive integers centered at $c_i$, and the graph $R'''(m,n;S)$ is defined in Section $2$.
\end{theo}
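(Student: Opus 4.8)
The plan is to prove Theorem~\ref{tbd} by the same complementation argument that handles the three preceding theorems, the sole genuine novelty being that $m$ is now \emph{even}. This parity forces the balancing to be achieved by placing the odd windows symmetrically about the off-axis line $\ell'$ rather than about $\ell$ (so the region is only nearly-symmetric), and consequently the reduced graph on the bottom line of \eqref{ebf} lands in the $R'''$ family \eqref{ebaw} instead of the $R''$ family of Theorem~\ref{tbc}. First I would pass to the dual picture: the region $AR'''_{m-k,m-k-a}\setminus\bigcup_i O_{k,a_i-k}^{(c_i)}$ is replaced by its planar dual, an Aztec rectangle graph of parameters $(m-k,\,m-k-a)$ with the vertices occupied by the odd windows deleted, so that $\M$ of the region equals the number of perfect matchings of this graph. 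Every subsequent step is carried out on graphs.

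The engine is the complementation theorem for perfect matchings of cellular graphs of \cite{CT}. I would exhibit the dual graph as a subgraph of a cellular graph and take, as the region to be complemented, the annular band lying between the frame $AR_{m,m-a}$ and the outer boundary $AR_{m-k,m-k-a}$, together with the $k$ inner rings of each odd window. Applying the complementation theorem then simultaneously shrinks every window from $O_{k,a_i-k}$ down to the degenerate $O_{0,a_i}$ and inflates the outer boundary back out to the frame, turning the graph into the dual of $AR'''_{m,m-a}\setminus\bigcup_i O_{0,a_i}^{(c_i)}$ while producing a multiplicative factor that is a power of $2$. The geometric input to be verified here---this is exactly what differs from Theorem~\ref{tbc}---is that, with the windows symmetric about $\ell'$, the cellular completion and the complementation region can still be arranged so that the hypotheses of \cite{CT} hold and each intermediate graph stays balanced throughout the reduction.

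The remaining work is the bookkeeping of the power of $2$, which I would organize as $k$ successive one-ring reductions. Lowering the common window-height parameter from $j$ to $j-1$ contributes a factor $2^{\,j(s+1)-(m+1)}$, where the $j(s+1)$ records that each of the $s$ windows together with the single outer frame sheds a layer of $j$ cells, and $-(m+1)$ is the boundary correction accounting for the width of the peeled ring in this height-$>$-width regime. Summing the exponents over $j=1,\dots,k$ telescopes to
\[
\sum_{j=1}^{k}\bigl(j(s+1)-(m+1)\bigr)=\binom{k+1}{2}(s+1)-k(m+1),
\]
which is precisely the exponent appearing in \eqref{ebf}; this yields the first equality. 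The second equality is then a direct unwinding of the definitions: the degenerate window $O_{0,a_i}^{(c_i)}$ is a run of consecutive unit squares centered at $c_i$, so that prescribing the windows is the same as prescribing the label set $T=A_1^{(c_1)}\cup\cdots\cup A_s^{(c_s)}$ in the definition of the $R'''$ family, whence $AR'''_{m,m-a}\setminus\bigcup_i O_{0,a_i}^{(c_i)}$ is by definition the graph $R'''(m,m-a;T)$, and one invokes \eqref{ebaw}.

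The step I expect to be the main obstacle is confirming that the complementation theorem of \cite{CT} applies verbatim in the $\ell'$-symmetric (nearly-symmetric) geometry, and that the accompanying cell count produces exactly $\binom{k+1}{2}(s+1)-k(m+1)$ rather than a value shifted by the parity of $m$. In particular one must control the behaviour of the reduction at the two ends of the Aztec rectangle and where adjacent window boundaries meet, checking that the disjointness hypothesis on the $A_i^{(c_i)}$ (recorded in the footnote to Theorem~\ref{tba}) is exactly what keeps the intermediate graphs balanced. Since this count has already been carried out for Theorem~\ref{tbc} and the final exponent is identical, the expectation is that the same computation survives the substitution of $\ell'$ for $\ell$; but it is precisely this transfer, and not the overall strategy, that requires care.
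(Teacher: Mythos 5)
Your proposal is correct and follows essentially the same route as the paper: the published proof of Theorem \ref{tbd} consists precisely of observing that, with the windows placed symmetrically about $\ell'$, the outer boundary and the hole boundaries evolve under the complementation theorem exactly as in Theorem \ref{tbc} (the transfer you flag as the main obstacle, which the paper settles by the two-shadings observation from the proof of Theorem \ref{tbb}), so the $k$ successive applications produce the exponent $\sum_{j=1}^{k}\bigl(j(s+1)-(m+1)\bigr)={k+1\choose 2}(s+1)-k(m+1)$ and the second equality is definitional, via \eqref{ebaw}. One small slip: the complementation theorem cannot be applied in one shot to the whole ``annular band'' as your second paragraph suggests, since each application moves every boundary by exactly one unit, but your subsequent reorganization into $k$ successive one-ring reductions is exactly the paper's argument (with the per-step exponent properly justified there by counting types of horizontal paths of cells, rather than by your layer-shedding heuristic).
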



\parindent0pt

\medskip
{\it Remark $1$.} The special case of Theorems \ref{tba}--\ref{tbd} when $a_1=\cdots=a_s=k$ provide simple product formulas for the number of perfect matchings of the graphs obtained from Aztec rectangles by making same-length vertical slits so that the resulting graphs are horizontally symmetric or nearly-symmetric.

\parindent15pt
In particular, for $s=1$, this gives a pleasing counterpart of the Aztec rectangle with two intrusions (see Figure 9 and formula (3.6) in \cite{df}), which in turn is a special case of Krattenthaler's results \cite[Theorems 13--14]{Kratt} (more precisely, in \cite[Figure 9]{df} two consecutive runs of vertices that communicate with the exterior are removed from along a grid diagonal from an Aztec rectangle graph, while in the current situation one consecutive run --- positioned symmetrically or nearly-symmetrically --- is removed from a grid diagonal).

\medskip
\parindent0pt
{\it Remark $2$.} Our arguments that prove Theorems \ref{tba}--\ref{tbd} also work when the odd Aztec windows are lined up symmetrically along {\it any} horizontal grid diagonal, not just for $\ell$ or $\ell'$. Namely, if this grid diagonal is $d$ units below\footnote{ One unit here means translating $\ell$ one unit southeast.} $\ell$, our arguments imply that in this more general situation the number of domino tilings is equal to the same power of two as in Theorems \ref{tba}--\ref{tbd}, times the number of perfect matchings of graphs analogous to the ones in Figure \ref{fbaa}, but with the unit holes lined up along horizontal lines that are $d$ units below $\ell$. Formulas expressing the number of perfect matchings of the latter as $\lfloor d/2 \rfloor$-fold sums of round quantities were given by Krattenthaler in \cite[Theorems 11--12]{Kratt}; see also \cite[Theorem 7.3]{nesy} for a simpler expression (and a different proof) in the case $d=2$.

\medskip
\parindent0pt
{\it Remark $3$.} 
In our earlier work \cite{gd}, we conjectured that the asymptotic behavior of the bulk correlation of holes in domino tiling systems is given by the following
simple formula.
Given the holes $O_1,\dotsc,O_n$ on the square lattice, we conjectured that their correlation in the bulk $\omega(O_1,\dotsc,O_n)$ has asymptotics given by
\begin{equation}
\omega(O_1,\dotsc,O_n)\sim c \prod_{1\leq i<j\leq n} \de(O_i,O_j)^{\frac12 \q(O_i)\q(O_j)}
\label{EC}
\end{equation}
in the limit of large mutual separations between the holes,
where $\de$ is the Euclidean distance, $\q(O)$ denotes the charge of the hole $O$ (the number of white unit squares minus the number of black unit squares in $O$), and the constant $c$ depends only on the multiset of shapes of the holes $O_1,\dotsc,O_n$.
We call equation \eqref{EC} the {\it electrostatic conjecture}\footnote{
  To be precise, the electrostatic conjecture \cite[Equation (2.12)]{gd} (as well as its lozenge tilings version \cite[Conjecture 1]{ov}) specifies also the multiplicative constant, as being the product of the individual correlations of the holes. These explicit forms hold only for special, carefully defined correlations. 
  For the purposes of this discussion we will stick to the original version we presented in \cite[Section 14]{sc}, which does not specify the multiplicative constant, and holds for a larger class of naturally defined correlations --- the correlations $\tilde\omega$ and $\tilde\omega'$ presented here are two examples, each with specific advantages for calculations.
}
(for the lozenge tilings version, see \cite[Conjecture 1]{ov} and \cite[Section 14]{sc}; for general doubly periodic lattices, see \cite{ge}).

\parindent12pt
We point out here how the exact formulas given by Theorem \ref{tba} can be used to prove the electrostatic conjecture in the case when the collection of holes consists of odd Aztec rectangles of the same height, placed symmetrically along a horizontal lattice diagonal. For this, we define their correlation $\tilde\omega'$ as follows.


\parindent15pt
First, we recall from \cite[Section 5]{gd} the definition of the correlation $\tilde\omega$ of a collection of consecutive runs of monomers along a diagonal of the square grid:
\begin{equation}
\tilde\omega(O_{0,a_1}^{(c_1)},\dotsc,O_{0,a_s}^{(c_s)})
=
\lim_{m\to\infty}
\frac{\M(AR_{m,m+a}\setminus O_{0,a_1}^{(c_1)}\cup\cdots\cup O_{0,a_s}^{(c_s)})}
     {\M(AR_{m,m+a}\setminus O_{0,a_1}^{(c^0_1)}\cup\cdots\cup O_{0,a_s}^{(c^0_s)})},
\label{ebxa}
\end{equation}
where the locations $c^0_1,\dotsc,c^0_s$ of the centers of the runs at the denominator on the right hand side are reference positions, with $c^0_1=c_1$, and $c^0_2,\dotsc,c^0_s$ chosen so that these $s$ runs of monomers are contiguous (i.e., form together one single consecutive run).   

The correlation $\tilde\omega'$ is patterned on the above definition:
given holes in the shape of odd Aztec rectangles $O_{k,a_1-k}^{(c_1)},\dotsc,O_{k,a_s-k}^{(c_s)}$ along a grid diagonal (which we think of as being the symmetry axis $\ell$ from Theorem \ref{tba}), we define
\begin{equation}
\tilde\omega'(O_{k,a_1-k}^{(c_1)},\dotsc,O_{k,a_s-k}^{(c_s)})
=
\lim_{m\to\infty}
\frac{\M(AR_{m+k,m+k+a}\setminus O_{k,a_1-k}^{(c_1)}\cup\cdots\cup O_{k,a_s-k}^{(c_s)})}
   {\M(AR_{m+k,m+k+a}\setminus O_{k,a_1-k}^{(c^0_1)}\cup\cdots\cup O_{k,a_s-k}^{(c^0_s)})},
\label{ebxb}
\end{equation}
where the values of the reference coordinates $c^0_1,\dotsc,c^0_s$ which specify the centers of the holes are the same as in \eqref{ebxa} (which is the special case $k=0$ of \eqref{ebxb}; in particular, for $k>0$ the Aztec rectangle holes in \eqref{ebxb} do not touch each other).

Two applications of Theorem \ref{tba} give
\begin{equation}
\M(AR_{m+k,m+k+a}\setminus O_{k,a_1-k}^{(c_1)}\cup\cdots\cup O_{k,a_s-k}^{(c_s)})
=
2^{{k+1\choose2}(s+1)+km}
\M(AR_{m,m+a}\setminus O_{0,a_1}^{(c_1)}\cup\cdots\cup O_{k,a_s-k}^{(c_s)})
\end{equation}
and
\begin{equation}
\M(AR_{m+k,m+k+a}\setminus O_{k,a_1-k}^{(c^0_1)}\cup\cdots\cup O_{k,a_s-k}^{(c^0_s)})
=
2^{{k+1\choose2}(s+1)+km}
\M(AR_{m,m+a}\setminus O_{0,a_1}^{(c^0_1)}\cup\cdots\cup O_{k,a_s-k}^{(c^0_s)}).
\end{equation}
Taking their ratio and letting $m\to\infty$ we obtain
\begin{equation}
\tilde\omega'(O_{k,a_1-k}^{(c_1)},\dotsc,O_{k,a_s-k}^{(c_s)})
=
\tilde\omega(O_{0,a_1}^{(c_1)},\dotsc,O_{0,a_s}^{(c_s)}).
\label{ebxc}
\end{equation}
The arguments in the proof of \cite[Lemma 5.1]{gd} imply that, when restricted to holes that are consecutive runs of monomers lined up along a common grid diagonal, the correlation $\tilde\omega$ defined above and the correlation $\bar\omega$ defined by \cite[Equations (2.2)--(2.4)]{gd} differ only by a multiplicative constant. Therefore, Theorem 3.1 of \cite{gd}, which proves the strong version of the electrostatic conjecture for the correlation $\bar\omega$ of runs of monomers along a common diagonal, implies that the correlation $\tilde\omega$ of such defects satisfies the weak form of the electrostatic conjecture. By \eqref{ebxc}, this gives in turn that the correlation $\tilde\omega'$ of odd Aztec rectangles of the same height lined up along a diagonal also satisfies the weak form of the electrostatic conjecture.

\medskip
\parindent0pt
{\it Remark $4$.} Note that by our definitions, for the last two types of Aztec rectangles with odd windows it is allowed for the leftmost or rightmost window (or both) to communicate with the outside --- this happens precisely when those extremal windows are flush with the outer boundary of the region\footnote{ This is not possible for the first two types, as for them the frame (which contains the windows) is strictly inside their outer boundary.}. It turns out that in these situations some new families of regions arise, which are still covered by the proofs of Theorems \ref{tbc} and \ref{tbd}.

\begin{figure}[t]
\centerline{
\hfill
{\includegraphics[width=0.40\textwidth]{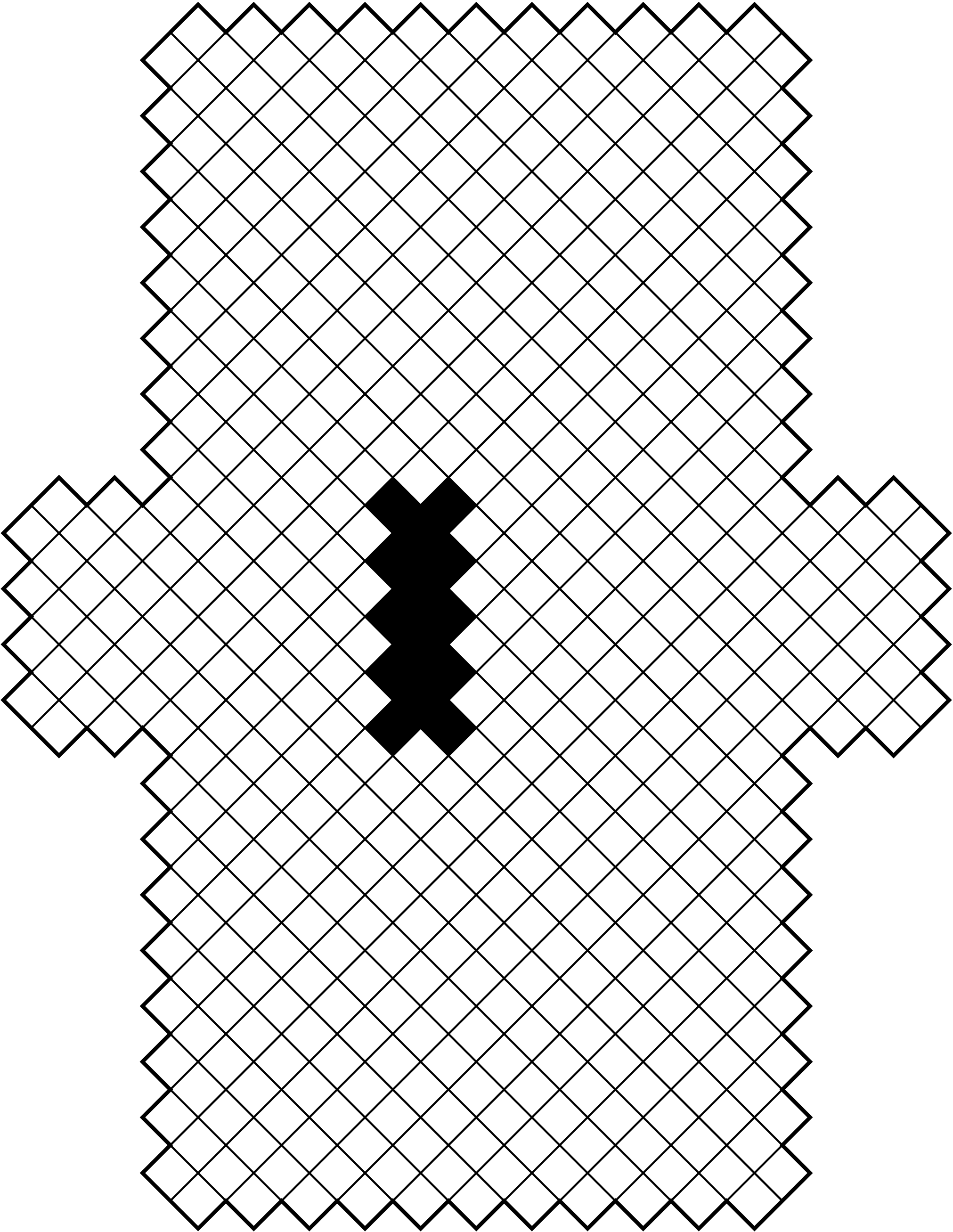}}
\hfill
}
\vskip0.1in
\caption{ The cruciform region $C_{m-a-k,k}^{(m-1)/2-k,a_s+1-k,(m-1)/2-k,a_1+1-k}$ with one hole of shape $O_{k,a_2-k}$, for $m=25$, $k=4$, $s=3$, $a_1=1$, $a_2=5$, $a_3=1$ (so $a=\sum_{i=1}^s (a_i+1)=10$).}
\vskip-0.1in
\label{fbd}
\end{figure}

\parindent15pt
In order to arrive at them, it will be helpful to think of the regions involved in Theorem \ref{tbc} as being the planar duals of graphs obtained by repeated applications of the complementation theorem \cite[Theorem 2.1]{CT} to the graph $R''(m,n;T)$ on the right hand side of \eqref{ebe} (see Figure~\ref{fdx} and its caption).

Suppose for instance that both the first and the last run of consecutive missing monomers in $R''(m,n;T)$ communicate with the outside (the picture on the left in Figure~\ref{fdx} illustrates such a case). Then, after $k$ successive applications of the above mentioned complementation theorem, it is not hard to see that the outside boundary of $R''(m,n;T)$ becomes the planar dual of a certain cruciform region --- these regions were introduced in our earlier work \cite{df} --- namely the cruciform region $C_{m-a-k,k}^{(m-1)/2-k,a_s+1-k,(m-1)/2-k,a_1+1-k}$. As long as $k\leq a_1$ and $k\leq a_s$, these are $AR''$-type Aztec rectangles with odd windows. But for larger values of $k$ --- in particular, if $k$ is greater or equal than both $a_1+1$ and $a_s+1$ --- then the obtained regions do have actual cruciform shape; Figure \ref{fbd} shows such an example (it is the region whose planar dual is the graph on the right in Figure \ref{fdx}).

It is not hard to see (and it is part of our proof of Theorem \ref{tbc}) that the effect on the exponent of 2 coming from the repeated applications of the complementation theorem contributed by the first (and similarly the last) hole is the same whether or not that hole communicates with the exterior, as long as $k\leq a_1$. It turns out (as we show in the proof of Theorem \ref{tbc}) that the same is true for $k\geq a_1+1$.
We obtain that the right hand side of \eqref{ebe} also gives the number of domino tilings of the cruciform region $C_{m-a-k,k}^{(m-1)/2-k,a_s+1-k,(m-1)/2-k,a_1+1-k}$ with holes $O_{k,a_2-k},\dotsc,O_{k,a_{s-1}-k}$ placed symmetrically along the horizontal symmetry axis; an example is shown in Figure \ref{fbd}. Our proof of Theorem~\ref{tbd} also covers an analogous family of cruciform regions with holes.




\section{Toroidal Aztec rectangles with odd Aztec windows}

Denote by $T_{m,n}$ the graph obtained from the Aztec rectangle $AR_{m,n}$ by identifying corresponding vertices along its top and bottom, and along its left and right boundaries (Figure \ref{fca} shows $T_{13,17}$ with some holes in it).

Since it is obtained from $AR_{m,n}$ by identifying vertices of the same color, the graph $T_{m,n}$ is bipartite. Fix a black and white proper coloring of its vertices. We will make holes in this graph by removing sets of vertices that induce subgraphs isomorphic to odd Aztec rectangles. Each such collection of vertices can be placed in two different ways: if, under its placement, the majority of the vertices are white, we say that the odd Aztec rectangle hole is {\it white-placed}, while if the majority are black, we say that the hole is {\it black-placed}$\,$\footnote{ The difference between the number of majority and minority vertices in $O_{k,l}$ is $k+l+1$, so there is always a strict majority.}. 

One convenient property of the toroidal Aztec rectangle $T_{m,n}$ is that, for all values of $m$ and~$n$, the number of its white and black vertices is the same (indeed, pairing each white vertex with its northeastern neighbor\footnote{ Under the toroidal boundary conditions we described. } gives a bijection between the two color classes). 
Therefore, in order to end up with a balanced graph, the number of white and black vertices in the union of the holes has to be the same. In other words, if we define the {\it charge} $\q(O)$ of a hole $O$ to be the number of white vertices in $O$ minus the number of black vertices in $O$, what we need is that the total charge (i.e., the sum of the individual charges) of the holes is zero.

\begin{figure}[t]
\centerline{
\hfill
{\includegraphics[width=0.50\textwidth]{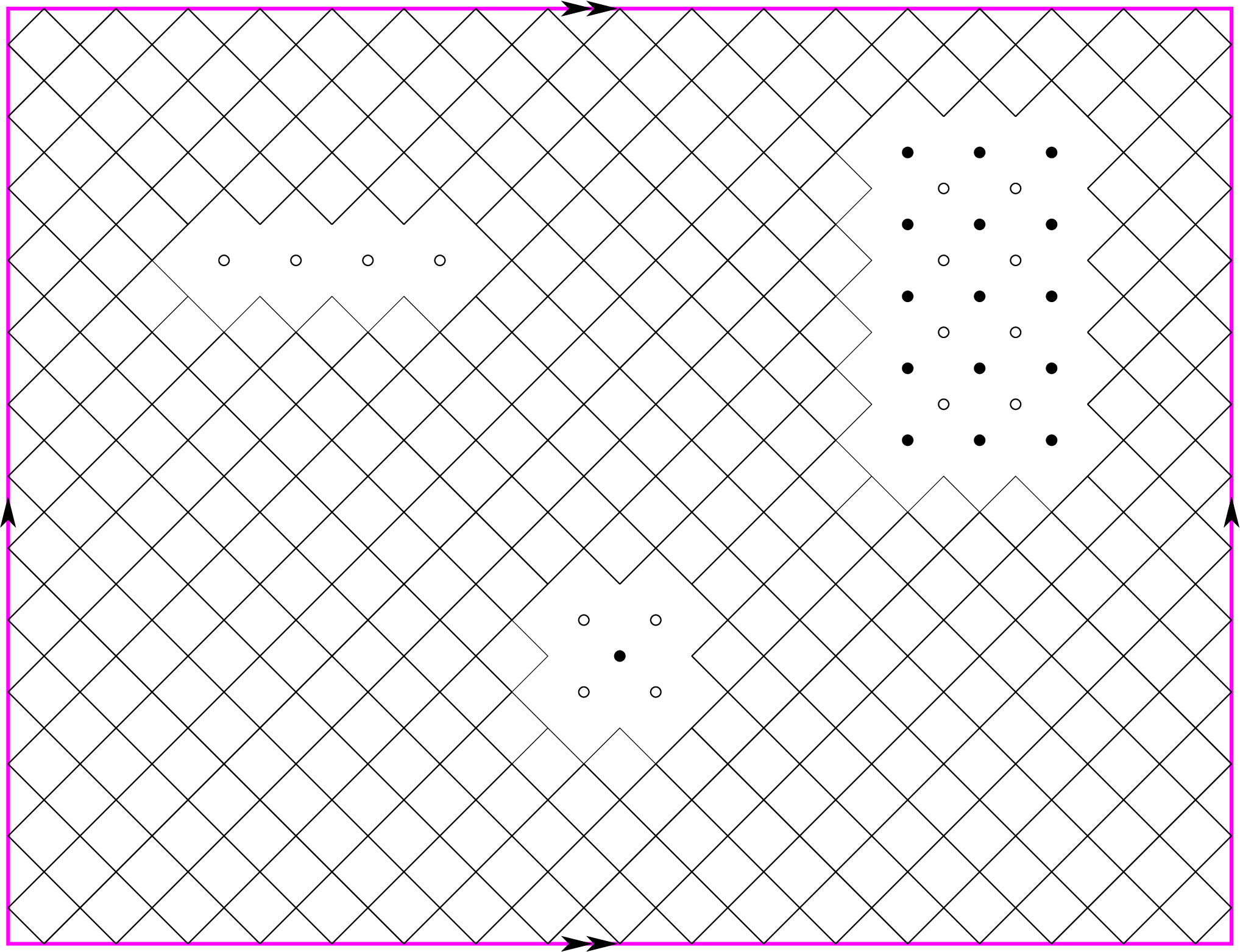}}
\hfill
{\includegraphics[width=0.35\textwidth]{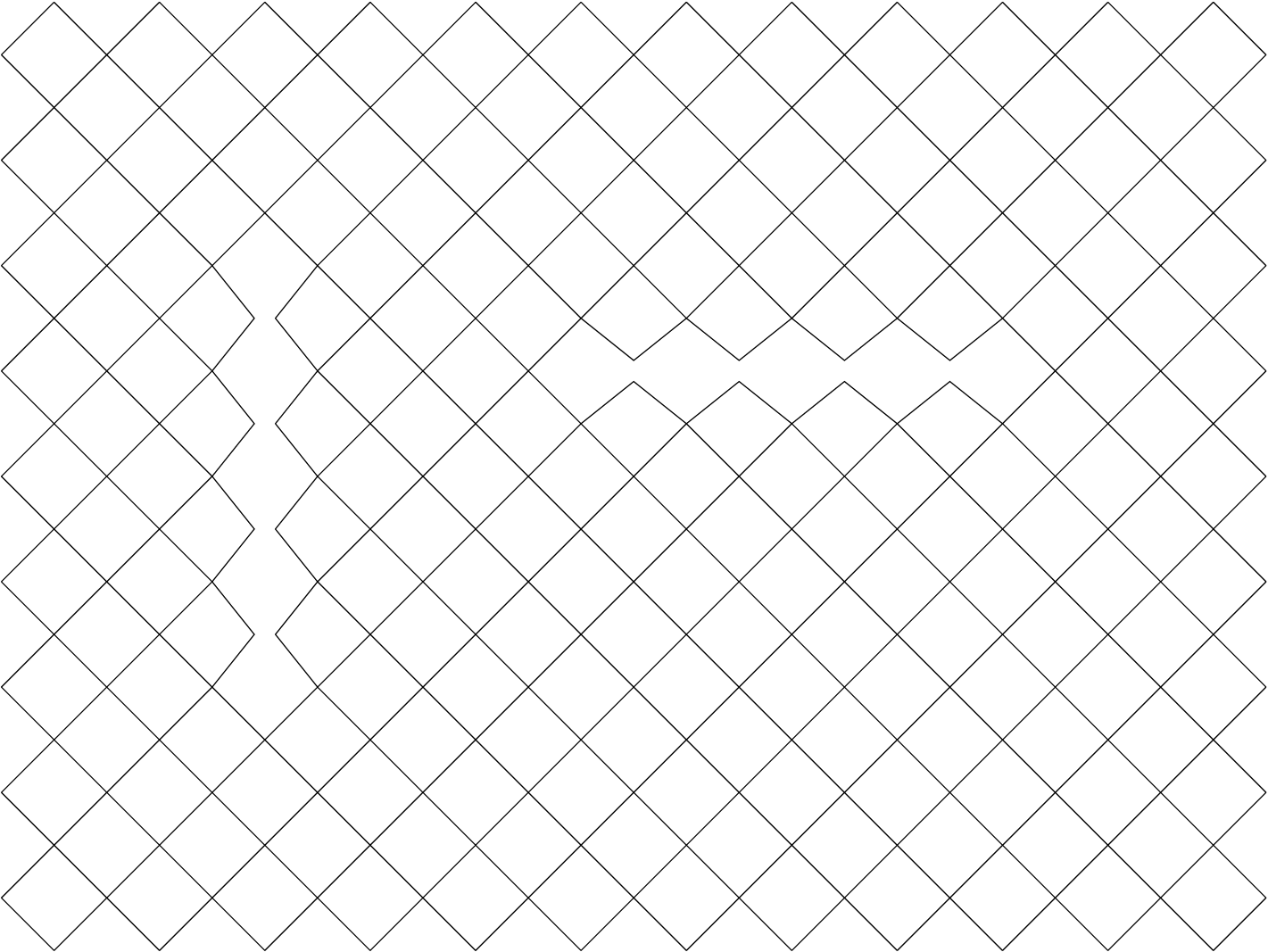}}
\hfill
}
\vskip0.1in
\caption{{\it Left.} A toroidal Aztec rectangle graph with three odd Aztec windows: two white-placed (in the shape of $O_{0,3}$ and $O_{1,1}$) and one black-placed (of shape $O_{4,2}$). {\it Right.} A vertical and a horizontal defect consisting of four contiguous separations.} 
\vskip-0.1in
\label{fca}
\end{figure}

One readily checks that the charge of a white-placed odd Aztec rectangle $O_{k,l}$ is $k+l+1$, while the charge of a black-placed odd Aztec rectangle $O_{k,l}$ is $-(k+l+1)$.


Let $k,l\geq0$ be integers. Given a hole $O$ of shape $O_{k,l}$ with $l\geq1$ which is white-placed, we define the {\it evolved form of $O$} to be the hole $\e(O)$ of shape $O_{k+1,l-1}$ which has the same center as~$O$. For a a hole $O$ of shape $O_{k,l}$ with $k\geq1$ which is {\it black}-placed, we define its evolved form to be the hole $\e(O)$ of shape $O_{k-1,l+1}$, again placed so that it has the same center as~$O$. If $O$ is white-placed of shape $O_{k,0}$, we define $\e(O)$ to be the defect consisting of $k+1$ contiguous vertical separations, placed so that its center agrees with the center of $O$. For $O$ a black-placed hole of shape $O_{0,l}$, define $\e(O)$ to be the defect consisting of $l+1$ contiguous {\it horizontal} separations, placed so that its center agrees with the center of $O$. The picture on the right in Figure \ref{fca} shows examples of such defects.

The following fact will be useful for the phrasing of the results in this section.

\begin{figure}[t]
\centerline{
\hfill
{\includegraphics[width=0.25\textwidth]{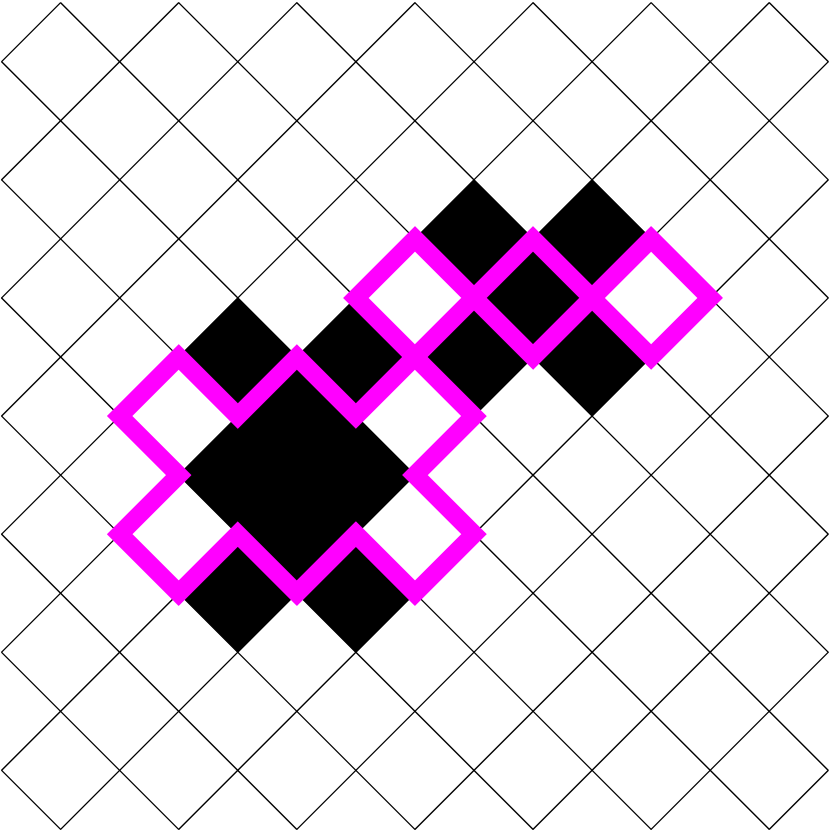}}
\hfill
{\includegraphics[width=0.25\textwidth]{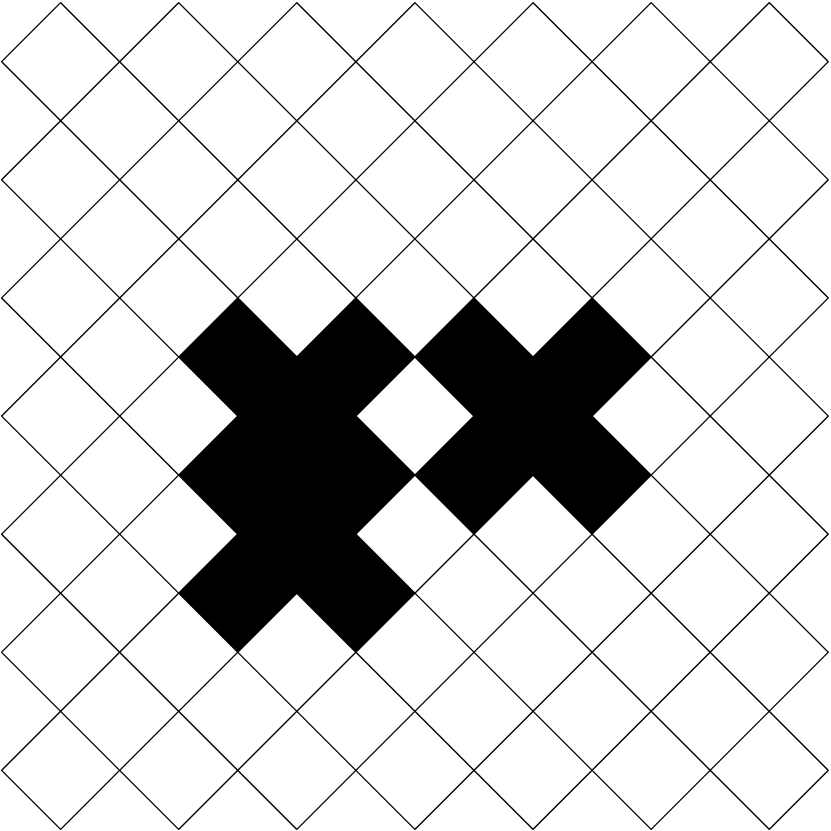}}
\hfill
}
\vskip0.1in
\caption{ {\it Left.} Two holes of odd Aztec rectangle shape (shown in black) that touch vertically with one common corner have disjoint evolved forms (their contours are shown in magenta). {\it Right.} If the holes touch vertically with at least two common corners, a region with these holes has no domino tiling} 
\vskip-0.1in
\label{fcaa}
\end{figure}

\begin{lem}
\label{tcaa}
Let $T_{m,n}$ be a toroidal Aztec rectangle, and let $O_1,\dotsc,O_r$ be holes in it, each in the shape of some odd Aztec rectangle. Assume that the $O_i$'s are mutually disjoint, and that the graph $T_{m,n}\setminus O_1\cup\cdots\cup O_r$ obtained from $T_{m,n}$ by making in it these holes has at least one perfect matching. Then the evolved forms $\e(O_1),\dotsc,\e(O_r)$ of the holes are also mutually disjoint.

\end{lem}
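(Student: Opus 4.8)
The plan is to establish the dichotomy suggested by Figure \ref{fcaa}: two disjoint odd Aztec rectangle holes have overlapping evolved forms precisely when they are in a \emph{facing contact} --- touching with at least two common corners along a side perpendicular to the direction in which the evolution expands them --- and such a contact is incompatible with the existence of a perfect matching. Since mutual disjointness is a pairwise condition, I would fix a single pair $O_i,O_j$ with $O_i\cap O_j=\emptyset$ and show $\e(O_i)\cap\e(O_j)=\emptyset$, using the global hypothesis that $T_{m,n}\setminus O_1\cup\cdots\cup O_r$ has a perfect matching only to exclude the offending configuration for this one pair.

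First I would record the geometry of a single evolution step. Passing to the diagonal coordinates in which each odd Aztec rectangle $O_{k,l}$ becomes an axis-aligned rectangle, with width governed by $l$ and height by $k$, the move $O_{k,l}\mapsto O_{k+1,l-1}$ for a white-placed hole adds one unit row at the top and at the bottom and deletes one unit column on the left and on the right, while the black-placed move $O_{k,l}\mapsto O_{k-1,l+1}$ does the opposite; the degenerate cases $l=0$ (resp.\ $k=0$) produce the vertical (resp.\ horizontal) strings of separations, which are thin versions of the same caps. The key consequence is that $\e(O)\setminus O$ consists only of caps lying on the two sides of $O$ perpendicular to its growth direction --- vertical caps for white-placed holes, horizontal caps for black-placed holes --- each of thickness one.

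Next I would analyze when these caps create an overlap, organizing the argument by the placements of the pair. Since $O_i$ and $O_j$ are disjoint, any point of $\e(O_i)\cap\e(O_j)$ must lie in a cap of at least one of them; tracking the one-unit expansions together with the simultaneous one-unit contractions shows that an overlap can arise only if the boundaries of $O_i$ and $O_j$ run parallel and adjacent along a segment in the relevant growth direction, i.e.\ only if they touch with at least two common corners, as opposed to a single-corner, tip-to-tip contact, which leaves the grown forms just barely disjoint (compare the two panels of Figure \ref{fcaa}). I expect the white/white and black/black cases to be the clean prototypes, with the mixed case requiring one to examine the expansions in the two coordinate directions separately.

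Finally I would prove that a two-common-corner facing contact forces non-tileability, which I anticipate will be the main obstacle. The argument should be local to the junction: the two parallel adjacent boundaries force the matching edges on the cells wedged in the crook between the holes, and carrying this forcing out exhibits either an uncoverable cell or a bounded region of unequal color content. Since the hypothesis guarantees a perfect matching of the full graph, no such forced contradiction can occur, which rules out the facing contact for the pair $O_i,O_j$; combined with the previous step this yields $\e(O_i)\cap\e(O_j)=\emptyset$, and hence the mutual disjointness of all the $\e(O_t)$. The points demanding the most care are to show that this local obstruction is insensitive to the placement of the remaining holes $O_h$ with $h\neq i,j$, and that the color imbalance appears uniformly across all four parity/placement combinations, so as to justify the single clean statement depicted in Figure \ref{fcaa}.
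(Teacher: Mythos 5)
Your proposal follows essentially the same route as the paper's proof: reduce to a pair of holes, observe that an overlap of the evolved forms can only arise from a facing contact with at least two common corners (one common corner leaving the evolved forms barely disjoint, exactly as in Figure \ref{fcaa}), and rule out that contact because the unit square wedged between the two holes has all its neighbors removed and hence cannot be covered by any domino --- a purely local obstruction, so your worry about the remaining holes $O_h$ is moot. The only simplification you miss is the mixed white/black-placed case, which the paper dispatches without the matching hypothesis at all: the two holes evolve in opposite directions (one grows taller and narrower, the other shorter and wider, both keeping their centers), so disjointness is preserved automatically, with no case analysis of caps needed.
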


\begin{proof}
  Consider two of the holes, without loss of generality $O_1$ and $O_2$; by assumption, they are disjoint. We claim that if one of them is white-placed and the other black-placed, it follows that $\e(O_1)$ and $\e(O_2)$ are also disjoint, even without using the assumption that $T_{m,n}\setminus O_1\cup\cdots\cup O_r$ has at least one perfect matching. Indeed, it follows from the definition of the evolved form that $O_1$ and $O_2$ evolve in opposite directions. Namely, since one of them is white-placed and the other black-placed, for one of them the evolved form makes the odd Aztec rectangle shape of the hole be one unit wider and one unit less tall, while for the other it is one unit taller and one unit less wide. Due to these movements being compatible and the fact that the evolved form of a hole has the same center as the original hole, the disjoint holes $O_1$ and $O_2$ produce disjoint evolved forms $\e(O_1)$ and $\e(O_2)$.

Suppose now that both $O_1$ and $O_2$ are white-placed holes in the shape of odd Aztec rectangles (the case when they are black-placed is handled similarly). By definition, $\e(O_1)$ and $\e(O_2)$ have odd Aztec rectangle shapes that have the same centers as $O_1$ and $O_2$, and are one unit wider and one unit less tall. Suppose towards a contradiction that $\e(O_1)$ and $\e(O_2)$ are not disjoint. Since $O_1$ and $O_2$ are, this could only happen if $O_1$ and $O_2$, when regarded as holes in a region tiled by dominos, touch in at least two points along a vertical diagonal (see Figure \ref{fcaa}). However, then there is at least one unit square ``island'' between $O_1$ and $O_2$, which cannot be covered by a domino, contradicting the fact that $T_{m,n}\setminus O_1\cup\cdots\cup O_r$ has at least one perfect matching. This completes the proof.
\end{proof}  

The results in this section concern toroidal Aztec rectangle graphs $T_{m,n}$ in which holes $O_1,\dotsc,O_r$ were made, each hole having the shape of an odd Aztec rectangle. We will assume from here on that the removed odd Aztec rectangles are disjoint, and that the total charge of the holes is zero. We call the holes {\it windows}, and the resulting graphs $T_{m,n}\setminus O_1\cup\cdots\cup O_r$ {\it toroidal Aztec rectangles with odd Aztec windows} (or simply toroidal Aztec rectangles with windows).

The {\it flank charge} of a hole $O$ of shape $O_{k,l}$ is defined to be $\f(O)=l$ if the hole $O_{k,l}$ is white-placed, and $\f(O)=-(l+1)$ if the hole $O_{k,l}$ is black-placed.

The way the number of perfect matchings of a toroidal Aztec diamond with odd Aztec windows changes under evolution of windows is given by the following result.

\begin{theo}
\label{tca} For any toroidal Aztec rectangle with odd Aztec windows $O_1,\dotsc,O_r$ which has at least one perfect matching we have
\begin{equation}
\M(T_{m,n}\setminus O_1\cup\cdots\cup O_r)
=
2^{\sum_{i=1}^r \f(O_i)}
\M(T_{m,n}\setminus \e(O_1)\cup\cdots\cup \e(O_r)).
\label{eca}
\end{equation}

\end{theo}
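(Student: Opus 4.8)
The plan is to deduce the evolution identity \eqref{eca} from a single application of the complementation theorem of \cite[Theorem 2.1]{CT}, carried out simultaneously at all the windows. First I would reduce matters to understanding the effect of evolving one window while holding the others fixed, so that the contributions multiply. This reduction is legitimate because the windows $O_1,\dotsc,O_r$ are disjoint by hypothesis and their evolved forms $\e(O_1),\dotsc,\e(O_r)$ are disjoint by Lemma \ref{tcaa}; consequently the local cellular patch that gets complemented near each window---roughly, the strip lying between the boundaries of $O_i$ and $\e(O_i)$---can be arranged to be disjoint from the patches of the other windows, and the complementation theorem may be applied to all of them at once. The target graph produced by this simultaneous complementation is exactly $T_{m,n}\setminus\e(O_1)\cup\cdots\cup\e(O_r)$, and the accumulated power of $2$ is the sum of the individual contributions.

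The heart of the argument is the single-window step. Fixing a white-placed window $O_i$ of shape $O_{k,l}$ with $l\geq1$, I would regard a bounded neighborhood of $O_i$ in $T_{m,n}\setminus O_1\cup\cdots\cup O_r$ as a cellular graph in the sense of \cite{CT}, displaying the cells that fill the region swept out as $O_{k,l}$ deforms into $O_{k+1,l-1}$ (one unit taller, one unit narrower, same center). Applying the complementation theorem to these cells replaces the patch by its complement, which one checks geometrically to be precisely the neighborhood of the evolved window $\e(O_i)=O_{k+1,l-1}$; the black-placed case $O_{k,l}\to O_{k-1,l+1}$ with $k\geq1$ is handled symmetrically, with the two diagonal directions interchanged (so that $\M$ increases rather than decreases under the evolution). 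The theorem records this replacement at the cost of a factor $2^{N_i}$, where $N_i$ is the (signed) number of cells complemented near $O_i$, the sign encoding the direction in which the theorem is applied.

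It then remains to evaluate $N_i$ and to confirm that it equals the flank charge $\f(O_i)$. For a generic white-placed $O_{k,l}$ the complemented cells form a run along a flank of the hole, and counting them should give $N_i=l=\f(O_i)$; for a black-placed $O_{k,l}$ the analogous count yields $N_i=-(l+1)=\f(O_i)$. The two degenerate cases must be treated separately but fit the same scheme: a white-placed $O_{k,0}$ has no flank to complement, so $N_i=0=\f(O_{k,0})$ and its evolution to $k+1$ vertical separations leaves $\M$ unchanged, while a black-placed $O_{0,l}$ contributes $N_i=-(l+1)=\f(O_{0,l})$ and its evolution to $l+1$ horizontal separations divides $\M$ by $2^{l+1}$. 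Summing over $i$ produces the exponent $\sum_{i=1}^r\f(O_i)$ appearing in \eqref{eca}.

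The main obstacle I anticipate is the faithful verification that the local patches genuinely satisfy the hypotheses of the complementation theorem of \cite{CT}---in particular, that the strip between $O_i$ and $\e(O_i)$ decomposes into cells of the required type and that complementing them reproduces exactly the neighborhood of $\e(O_i)$---together with the bookkeeping that pins $N_i$ to the flank charge in all four cases, generic and degenerate. A secondary point requiring care is confirming that the cellular patches attached to distinct windows can indeed be chosen pairwise disjoint; here I would lean on Lemma \ref{tcaa} and the argument in its proof that rules out windows touching along a diagonal, which guarantees that after evolution no two holes collide and hence that the simultaneous application of the theorem is well posed.
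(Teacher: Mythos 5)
Your proposal misuses the main tool. The complementation theorem \cite[Theorem 2.1]{CT} is not a local surgery rule that can be applied to a ``patch'' of cells near each window: it requires a cellular completion $G$ of the \emph{entire} graph $H=T_{m,n}\setminus O_1\cup\cdots\cup O_r$ (so $H$ is an induced subgraph of $G$, and every vertex of $G$ not in $H$ is extremal), and its exponent of $2$ is the sum of the \emph{types} (each $-1$, $0$ or $1$) of the paths in a partition of \emph{all} cells of $G$ --- not, as you write, ``the (signed) number of cells complemented near $O_i$.'' There is no version of the theorem that complements only the strip between $O_i$ and $\e(O_i)$ while leaving the rest of the graph untouched, so your simultaneous local-patch application is not well posed. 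The correct mechanism, which is what the paper does, is a \emph{single global} application with one fixed chessboard shading of the $4$-cycles of $T_{m,n}$: on the torus the partition into vertical paths of cells consists mostly of rings, which have no extremal vertices and hence type $0$ (this --- and not any locality of the surgery --- is why the bulk contributes nothing and only the hole boundaries evolve, white-placed holes automatically becoming taller and narrower and black-placed ones shorter and wider under the same shading). The exponent then comes from counting path types: a white-placed $O_{k,l}$ is crossed by exactly $l$ vertical paths with both extremal vertices in $H$ (type $1$ each), and a black-placed $O_{k,l}$ by exactly $l+1$ paths with neither extremal vertex in $H$ (type $-1$ each), giving $\f(O_i)$ per hole.

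Even granting a global application, your additivity claim has a real gap: disjointness of the holes and of their evolved forms (Lemma \ref{tcaa}) does \emph{not} imply that distinct windows occupy disjoint vertical bands of $4$-cycles. When a vertical ring of cells meets two or more holes, the paths of cells are interrupted several times and the contributions of the holes to the type-sum are entangled; your ``the patches can be arranged disjoint, so the contributions multiply'' step silently assumes this case away. The paper closes exactly this gap with a dedicated argument: including an additional hole interrupts one path of cells in each of the $l$ middle rings, and in every case the two newly created paths have types summing to exactly one more than the type of the interrupted path (type $1$ splits into $1+1$, type $0$ into $0+1$, type $-1$ into $0+0$), so the type-sum still increases by precisely $\f(O)$ regardless of how the vertical bands of different holes overlap. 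Without that case analysis (or some substitute for it), your proof establishes the theorem only under the extra hypothesis that each vertical ring of $4$-cycles meets at most one window, which is strictly weaker than the statement.
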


\medskip
\parindent=0pt
{\it Remark $5$.} It seems that the symmetry has been broken when we chose to define the flank charge of an odd Aztec window based on its width. This is resolved by noticing that a definition in terms of heights leads to the same expression as \eqref{eca}. Indeed, the correct definition of flank charge in terms of heights turns out to be $\f(O_{k,l})=-(k+1)$ if $O_{k,l}$ is white-placed, and $\f(O_{k,l})=k$ if $O_{k,l}$ is black-placed. For white-placed holes $O_{k_1,l_1},\dotsc,O_{k_s,l_s}$ and black-placed holes $O_{p_1,r_1},\dotsc,O_{p_t,r_t}$, the exponent of $2$ in \eqref{eca} is $l_1+\cdots+l_s-(r_1+1)-\cdots-(r_t+1)$, while with the above-mentioned alternative definition of flank charge it becomes $-(k_1+1)-\cdots-(k_s+1)+p_1+\cdots+p_t$. The two expressions are equal because the white-placed hole $O_{k_i,l_i}$ has charge $k_i+l_i+1$, the black-placed hole $O_{p_j,r_j}$ has charge $-(p_j+r_j+1)$, and the total charge of the holes is zero.

\parindent=15pt
\medskip
The special case when all white-placed windows are translates of each other, and all black-placed ones are 90 degree rotations of the white-placed ones, is especially simple.

\begin{cor}
\label{tcb}
Let $O_1,\dotsc,O_s$ be white-placed odd Aztec windows of shape $O_{k,l}$ and $O_{s+1},\dotsc,O_{2s}$ black-placed odd Aztec windows of shape $O_{l,k}$ in a toroidal Aztec rectangle $T_{m,n}$, and assume that the resulting toroidal Aztec rectangle with windows has at least one perfect matching. For an odd Aztec window $O$ of shape $O_{k,l}$, denote by $O^{+,-}$ $($resp., $O^{-,+}$$)$ the odd Aztec window of shape $O_{k+1,l-1}$ $($resp., $O_{k-1,l+1}$$)$ having the same center as $O$. Then we have

\begin{equation}
\M(T_{m,n}\setminus O_1\cup \cdots \cup O_s\cup O_{s+1}\cup \cdots \cup O_{2s})
=
2^{s(l-k-1)}\M(T_{m,n}\setminus O_1^{+,-}\cup \cdots \cup O_s^{+,-}\cup O_{s+1}^{-,+}\cup \cdots \cup O_{2s}^{-,+}).
\label{ecb}
\end{equation}
\end{cor}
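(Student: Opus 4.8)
The plan is to derive the corollary as a direct specialization of Theorem \ref{tca}, so the only work is to verify that its hypotheses apply and then to evaluate the exponent of $2$ on the right-hand side of \eqref{eca} in this particular configuration. Throughout I take $l\geq1$, which is forced by the fact that $O^{+,-}$ and $O^{-,+}$ are required to be genuine odd Aztec rectangles.

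First I would confirm the hypotheses of Theorem \ref{tca}. Disjointness of the windows and the existence of a perfect matching are assumed in the statement, so what remains is the zero-total-charge condition built into the definition of a toroidal Aztec rectangle with windows. Each white-placed window $O_i$ $(1\leq i\leq s)$ has shape $O_{k,l}$ and hence charge $k+l+1$, while each black-placed window $O_{s+i}$ $(1\leq i\leq s)$ has shape $O_{l,k}$ and hence charge $-(l+k+1)$; summing over all $2s$ windows gives $s(k+l+1)-s(k+l+1)=0$, as required.

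Next I would match the evolved forms against the windows appearing on the right-hand side of \eqref{ecb}. By definition a white-placed $O_{k,l}$ evolves to the odd Aztec rectangle $O_{k+1,l-1}$ with the same center, which is precisely $O_i^{+,-}$; and a black-placed window of shape $O_{l,k}$ (whose first index $l\geq1$ permits evolution) evolves to $O_{l-1,k+1}$ with the same center, which is precisely $O_{s+i}^{-,+}$. Hence $\e(O_i)=O_i^{+,-}$ for $1\leq i\leq s$ and $\e(O_{s+i})=O_{s+i}^{-,+}$ for $1\leq i\leq s$, so the matching count on the right of \eqref{eca} coincides with the one on the right of \eqref{ecb}.

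Finally I would compute the exponent $\sum_{i=1}^{2s}\f(O_i)$. Each white-placed $O_{k,l}$ contributes $\f=l$, for a total of $sl$ from the first $s$ windows; each black-placed window of shape $O_{l,k}$ contributes $\f=-(k+1)$, obtained by reading off the flank-charge formula $\f(O_{k,l})=-(l+1)$ with the two indices of the shape $O_{l,k}$ in their correct roles, for a total of $-s(k+1)$. Thus $\sum_{i=1}^{2s}\f(O_i)=sl-s(k+1)=s(l-k-1)$, exactly the exponent in \eqref{ecb}. Substituting this value together with the identified evolved forms into \eqref{eca} yields \eqref{ecb}. There is no substantive obstacle beyond bookkeeping; the one point demanding care is the orientation of the indices in the flank-charge and evolution rules for the black-placed windows, where the shape $O_{l,k}$ has the roles of $k$ and $l$ interchanged relative to the white-placed windows, and it is this interchange that produces the asymmetric exponent $s(l-k-1)$.
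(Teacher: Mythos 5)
Your proposal is correct and follows essentially the same route as the paper's own proof: both apply Theorem \ref{tca} directly, identify $\e(O_i)=O_i^{+,-}$ for the white-placed windows and $\e(O_{s+i})=O_{s+i}^{-,+}$ for the black-placed ones, and read off the flank charges ($l$ for a white-placed $O_{k,l}$ and $-(k+1)$ for a black-placed $O_{l,k}$) to get the exponent $s(l-k-1)$. Your explicit verification of the zero-total-charge hypothesis and your remark that the index interchange in the shape $O_{l,k}$ is what produces the asymmetric exponent are welcome bookkeeping details that the paper leaves implicit.
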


\begin{proof}
Apply Theorem \ref{tca}. By definition, the evolved form of an Aztec window $O$ is $O^{+,-}$ if $O$ is white-placed, and $O^{-,+}$ if $O$ is black-placed. The flank charge of a white-placed $O_{k,l}$ is $l$, while the flank charge of a black-placed $O_{l,k}$ is $-(k+1)$. Equation \eqref{ecb} thus follows directly from \eqref{eca}.
\end{proof}

By repeated application of Corollary \ref{tcb}, we can relate the case when all windows are congruent to~$O_{k,l}$ to that when all are congruent to $O_{0,k+l}$.

In the spirit of \cite{ge} --- where a pair of vertices of the same color on a face of a bipartite graph was called a doublet --- we define a {\it diagonal multiplet of length $k$} (also called sometimes a {\it diagonal slit}) to be the union of a consecutive run of $k$ diagonally adjacent vertices. With our drawing of the grid graph, diagonal multiplets are either horizontal or vertical (the leftmost hole in Figure \ref{fca} is a horizontal multiplet of length~4). Alternatively, a horizontal (resp., vertical) $k$-multiplet is just a hole of shape $O_{0,k-1}$ (resp.,~$O_{k-1,0}$).

For an odd Aztec window $O$ of shape $O_{k,l}$, denote by $\h(O)$ (resp., $\ve(O)$) the horizontal (resp., vertical) diagonal multiplet of length $k+l+1$ having the same center as $O$.

\begin{theo}
\label{tcc}
Let $O_1,\dotsc,O_s$ be white-placed odd Aztec windows of shape $O_{k,l}$ and $O_{s+1},\dotsc,O_{2s}$ black-placed odd Aztec windows of shape $O_{l,k}$ in a toroidal Aztec rectangle $T_{m,n}$, and assume that the resulting toroidal Aztec rectangle with windows has at least one perfect matching.
Then we have

\begin{equation}
\M(T_{m,n}\setminus O_1\cup \cdots \cup O_s\cup O_{s+1}\cup \cdots \cup O_{2s})
=
2^{-slk}\M(T_{m,n}\setminus \h(O_1)\cup \cdots \cup \h(O_s)\cup \ve(O_{s+1})\cup \cdots \cup \ve(O_{2s})).
\label{ecc}
\end{equation}

\end{theo}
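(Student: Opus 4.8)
The plan is to obtain Theorem \ref{tcc} by iterating Corollary \ref{tcb} exactly $k$ times. Keeping the centers of all $2s$ windows fixed, for $0\le j\le k$ let $G_j$ be the toroidal Aztec rectangle carrying the $s$ white-placed windows of shape $O_{j,\,k+l-j}$ and the $s$ black-placed windows of shape $O_{k+l-j,\,j}$. At one end, $G_k$ has white windows $O_{k,l}$ and black windows $O_{l,k}$, so it is the region on the left of \eqref{ecc}; at the other end, $G_0$ has white windows $O_{0,k+l}$ and black windows $O_{k+l,0}$, which by the definitions of $\h$ and $\ve$ are precisely $\h(O_1),\dots,\h(O_s)$ and $\ve(O_{s+1}),\dots,\ve(O_{2s})$, so $G_0$ is the region on the right of \eqref{ecc}. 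The key point is that passing from $G_j$ to $G_{j+1}$ is a single evolution step in the sense of Corollary \ref{tcb} (the white windows become one unit wider and one unit less tall, and the black windows evolve oppositely), so $G_0,G_1,\dots,G_k$ is a chain of $k$ evolution steps joining the two sides of \eqref{ecc}. I note that this requires moving the white windows from $O_{k,l}$ down to $O_{0,k+l}$, i.e.\ in the \emph{de}-evolution direction, a fact that matters below.

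With the chain in place, I would read off the exponents. Corollary \ref{tcb} applied to $G_j$ — whose white windows have shape $O_{j,k+l-j}$ — contributes the factor $2^{s((k+l-j)-j-1)}=2^{s(k+l-2j-1)}$, so that $\M(G_j)=2^{s(k+l-2j-1)}\M(G_{j+1})$. Multiplying these identities for $j=0,1,\dots,k-1$ gives $\M(G_0)=2^{\,s\sum_{j=0}^{k-1}(k+l-2j-1)}\M(G_k)$. The exponent sums to $k(k+l-1)-k(k-1)=kl$, whence $\M(G_0)=2^{skl}\M(G_k)$, that is $\M(G_k)=2^{-slk}\M(G_0)$, which is exactly \eqref{ecc}.

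The step I expect to be the main obstacle is justifying that Corollary \ref{tcb} applies at \emph{every} link of the chain, i.e.\ that each $G_j$ has mutually disjoint windows and at least one perfect matching; the hypothesis of Theorem \ref{tcc} supplies this only for $G_k$. Disjointness should propagate by the reasoning of Lemma \ref{tcaa}: a white/black pair automatically produces disjoint (de-)evolved forms, while for a same-colored pair a loss of disjointness would force the two windows to touch at two points along a grid diagonal, leaving an uncoverable unit cell and contradicting the existence of a perfect matching of the adjacent link. The delicate part is matchability of the intermediate $G_j$: the chain runs from the matchable $G_k$ towards the multiplet configuration $G_0$, which is the de-evolution direction, whereas Lemma \ref{tcaa} is phrased for the evolution direction. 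I would therefore need either to verify that the identity of Corollary \ref{tcb} persists as soon as both configurations have disjoint windows — so that, once disjointness is propagated, each nonzero power of $2$ forces $\M(G_j)>0$ from $\M(G_{j+1})>0$ — or to establish matchability of $G_0$ directly and then evolve \emph{forward} to $G_k$. Getting this propagation right, and checking that the windows stay disjoint throughout, is the crux; once all $G_j$ are known to be legitimate, matchable toroidal Aztec rectangles with windows, the telescoping computation of the second paragraph completes the proof.
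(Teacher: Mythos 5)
Your proposal follows the paper's proof essentially verbatim: the paper uses the very same chain of $k$ applications of Corollary \ref{tcb} (run forward from the multiplet configuration $G_0$ toward $G_k$), with the identical telescoping exponent $s\sum_{j=0}^{k-1}\bigl((k+l-2j)-1\bigr)=skl$. The crux you flag is exactly what the paper relegates to a footnote --- matching counts propagate across \eqref{ecb} because the factor is a nonzero power of $2$ --- and your route (a) is the right way to make this airtight: matchability enters Theorem \ref{tca} only through Lemma \ref{tcaa}, as a device to guarantee disjointness of the evolved windows, so once the identity holds at each link, nonvanishing propagates in both directions along the chain, which also covers the base case (the hypothesis concerns $G_k$, not $G_0$) that the paper leaves implicit.
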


\begin{proof}
Apply Corollary \ref{tcb} repeatedly, starting with the graph on the right hand side of \eqref{ecc}, which is
the toroidal Aztec diamond $T_{m,n}$ with $s$ white-placed Aztec windows of shape $O_{0,k+l}$, and $s$ black-placed Aztec windows of shape $O_{k+l,0}$. After $k$ successive applications\footnote{ The reason why it is possible to apply Corollary~\ref{tcb} successively is because the new graph obtained after each application still meets the condition that it has at least one perfect matching. Indeed, this follows from Corollary \ref{tcb} itself: if the number of perfect matchings on the left hand side of \eqref{ecb} is non-zero, then so is the one on the right hand side.} of Corollary~\ref{tcb}, the holes become $s$ white-placed Aztec windows of shape $O_{k,l}$, and $s$ black-placed Aztec windows of shape $O_{l,k}$. In other words, the resulting graph is precisely the one on the left hand side of \eqref{tcc}. The overall contribution of the powers of two coming from equation \eqref{ecb} is
\begin{equation*}
2^{s[(k+l)-0-1)]}\cdot2^{s[(k+l-1)-1-1]}\cdots2^{s[(l+1)-(k-1)-1]}=2^{skl},
\end{equation*}
and the statement follows.
\end{proof}

Specializing the above result to the case $l=0$ we obtain a surprising symmetry.


\begin{cor}[{\bf Flip symmetry of diagonal multiplets}]
\label{tcd}
Let $O_1,\dotsc,O_{2s}$ be diagonal multiplets of length $k$, $s$ of them horizontal and white-placed, the other $s$ vertical and black-placed, in a toroidal Aztec rectangle $T_{m,n}$, so that the resulting toroidal Aztec rectangle with windows has at least one perfect matching. For a horizontal $($resp., vertical$)$ diagonal multiplet $O$, define its \textnormal{flip} $\overline{O}$ to be the vertical $($resp., horizontal$)$ diagonal multiplet having the same length and the same center as $O$. Then we have

\begin{equation}
\M(T_{m,n}\setminus O_1\cup \cdots \cup O_{2s})
=
\M(T_{m,n}\setminus \overline{O}_1\cup \cdots \cup \overline{O}_{2s}).
\label{ecd}
\end{equation}

\end{cor}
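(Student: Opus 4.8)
The plan is to obtain this corollary as the degenerate, $l=0$ case of Theorem \ref{tcc}, where the odd Aztec windows collapse to diagonal multiplets and the power of $2$ disappears. Concretely, I would apply Theorem \ref{tcc} with second index $0$ and first index $k-1$ (so that the resulting multiplets have length $(k-1)+0+1=k$, matching the corollary). With these choices the white windows $O_{k-1,0}$ are vertical diagonal multiplets of length $k$, the black windows $O_{0,k-1}$ are horizontal diagonal multiplets of length $k$, and the exponent $2^{-slk}$ in \eqref{ecc} becomes $2^{0}=1$. The point to highlight is that the configuration appearing in \eqref{ecd} — namely $s$ horizontal, white-placed multiplets together with $s$ vertical, black-placed multiplets — is \emph{exactly} the multiplet side (the right-hand side of \eqref{ecc}) of this specialization, since that side consists of white $O_{0,k-1}$'s and black $O_{k-1,0}$'s.

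Next I would observe that when one of the two indices is $0$ the operations $\h$ and $\ve$ of Section~4 reduce to the flip of the corollary. Indeed, $\h(O)$ (resp.\ $\ve(O)$) is by definition the horizontal (resp.\ vertical) diagonal multiplet of length $k$ having the same center as $O$; applied to a multiplet $O$ of length $k$, this simply reverses the orientation while keeping the length and the center fixed, which is precisely $\overline{O}$. Hence the left-hand (window) side of the specialized \eqref{ecc} is $T_{m,n}$ with every multiplet replaced by its flip, and \eqref{ecc} reads $\M(T_{m,n}\setminus \overline{O}_1\cup\cdots\cup\overline{O}_{2s}) = \M(T_{m,n}\setminus O_1\cup\cdots\cup O_{2s})$, which is \eqref{ecd}. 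I would also record that the hypothesis ``at least one perfect matching'' transfers correctly: by the reasoning in the footnote to the proof of Theorem \ref{tcc}, existence of a perfect matching propagates through Corollary \ref{tcb} in both directions, so the configuration of \eqref{ecd} having a matching guarantees that Theorem \ref{tcc} is applicable to the corresponding flipped configuration.

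The step I expect to require the most care is the bookkeeping of centers and placements. One must verify that the ``same center'' used in the definitions of $\h$ and $\ve$ agrees with the ``same length and same center'' in the definition of the flip, and in particular that the bipartition class (white- versus black-placement) assigned to a flipped multiplet is the one dictated by Theorem \ref{tcc}. Here the subtlety is that the bipartition class of a diagonal multiplet of even length switches under flipping (the vertical and horizontal multiplets of the same length sharing a common center need not lie in the same color class), so one cannot simply assert that the flip preserves the placement. The resolution I would emphasize is that this causes no difficulty: $\h(O_i)$ and $\ve(O_j)$ are determined by center, length, and orientation alone, so as vertex sets they coincide with $\overline{O_i}$ and $\overline{O_j}$ regardless of color, and whatever placement results is exactly the one already tracked by the flank-charge exponent in \eqref{eca}, which in the present specialization vanishes. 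Thus the flip identity \eqref{ecd} is simply the $l=0$ shadow of the more refined, charge-sensitive statement \eqref{ecc}, and no separate parity analysis is needed beyond confirming that the multiplet side of the specialization is the configuration named in the corollary.
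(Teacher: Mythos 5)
Your proposal is correct and takes essentially the same route as the paper: both deduce \eqref{ecd} by specializing Theorem \ref{tcc} to the case where one window index is $0$ and the other is $k-1$, so that the multiplets have length $k$, the maps $\h$ and $\ve$ become the flip, and the exponent $2^{-slk}$ equals $1$. The only cosmetic difference is that you identify the given configuration with the multiplet (right-hand) side of \eqref{ecc} while the paper identifies it with the window (left-hand) side after an implicit relabeling of the two color classes; your parity caveat about even-length multiplets switching bipartition class under flipping is precisely the point that this relabeling silently absorbs, so your extra discussion there is compatible with (indeed more explicit than) the paper's argument.
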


An example illustrating the statement of Corollary \ref{tcd} is shown in Figure \ref{fcb}.

\begin{figure}[t]
\centerline{
\hfill
{\includegraphics[width=0.4\textwidth]{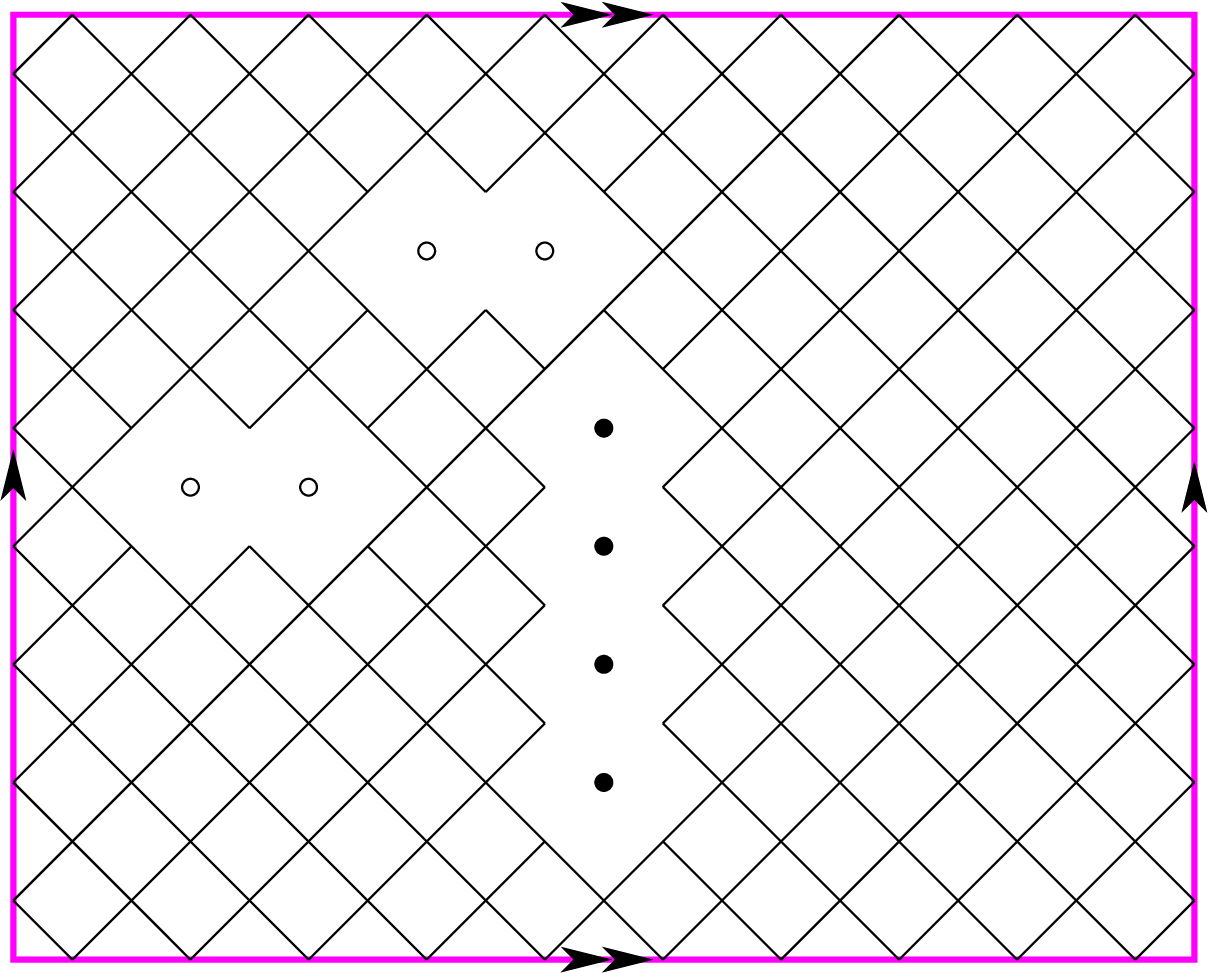}}
\hfill
{\includegraphics[width=0.4\textwidth]{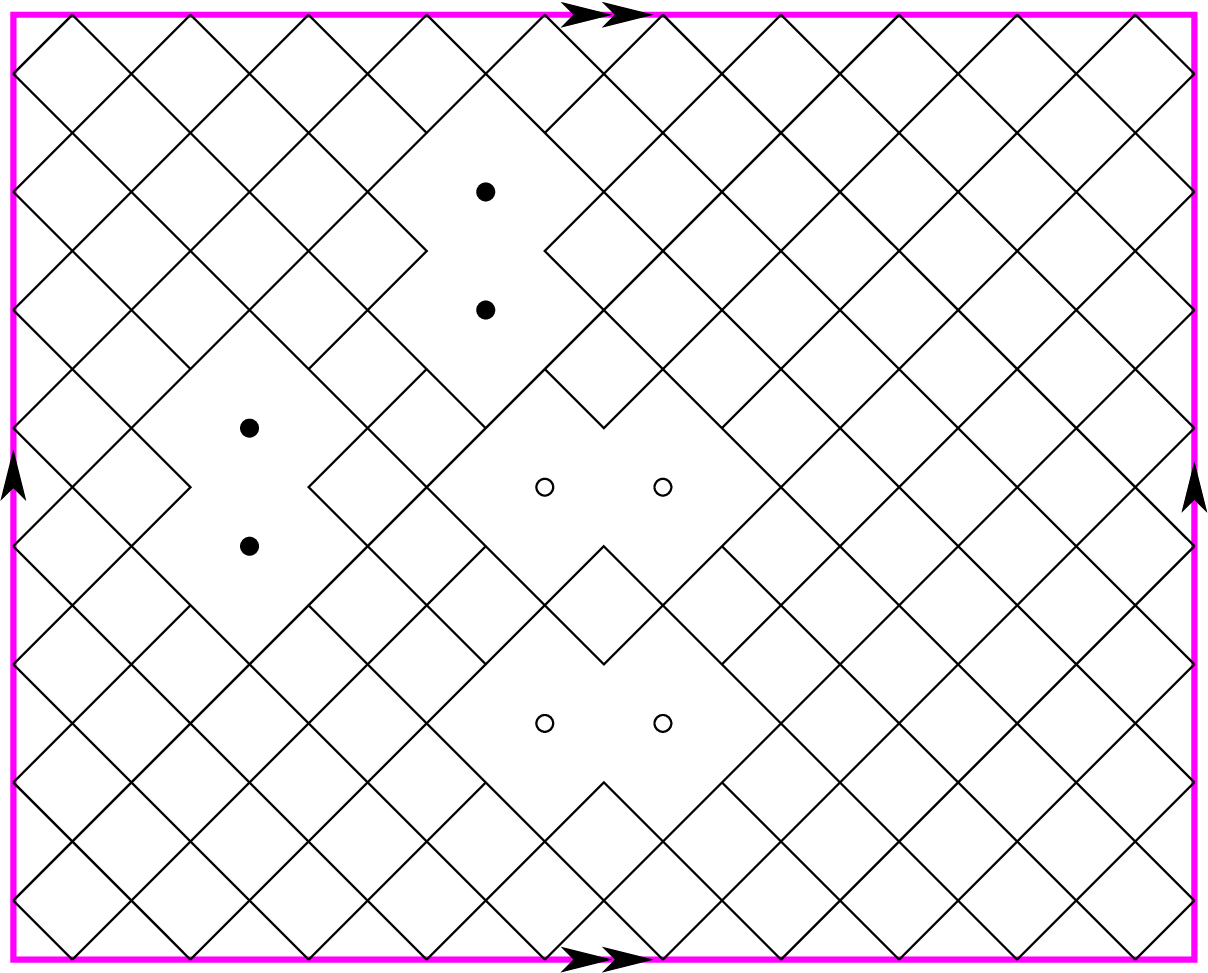}}
\hfill
}
\vskip0.1in
\caption{ The toroidal Aztec rectangle $T_{8,10}$ with four diagonal slits of length 2 (left; the two black slits happen to be contiguous in this example), and the same toroidal Aztec rectangle but with the diagonal slits flipped (right). By Theorem \ref{tcd}, they have the same number of perfect matchings.} 
\vskip-0.1in
\label{fcb}
\end{figure}

\begin{proof}
Without loss of generality assume that $O_1,\dotsc,O_s$ are black-placed vertical $k$-multiplets, and $O_{s+1},\dotsc,O_{2s}$ are white-placed horizontal $k$-multiplets. Then the former are black-placed $O_{k-1,0}$-shaped holes, while the latter are white-placed $O_{0,k-1}$-shaped holes. By definition, the shape of $\h(O_{k-1,0})$ is $O_{0,k-1}$, and the shape of $\ve(O_{0,k-1})$ is $O_{k-1,0}$. Therefore the statement follows directly from Theorem \ref{tcc} (as for $l=0$, the exponent of 2 in \eqref{ecc} becomes zero).
\end{proof}

We define the {\it finite size correlation} $\omega_{m,n}(O_1,\dotsc,O_k)$ of a collection of gaps $O_1,\dotsc,O_k$ of total charge zero on the square grid by encasing them in a toroidal Aztec rectangle graph $T_{m,n}$, and taking the ratio between the number of perfect matchings of the toroidal graph with holes and the intact toroidal graph:
\begin{equation}
\omega_{m,n}(O_1,\dotsc,O_k)=\frac{\M(T_{m,n}\setminus O_1\cup\cdots\cup O_k)}{\M(T_{m,n})}.
\label{ecex}
\end{equation}
Their {\it correlation} $\omega(O_1,\dotsc,O_k)$ is defined by setting $m=n$ and letting $n$ approach infinity:
\begin{equation}
\omega(O_1,\dotsc,O_k)=\lim_{n\to\infty}\frac{\M(T_{n,n}\setminus O_1\cup\cdots\cup O_k)}{\M(T_{n,n})}.
\label{ecey}
\end{equation}
We can extend this definition of $\omega$ to collections of holes $O_1,\dotsc,O_k$ of arbitrary total charge just as we did in \cite[Section 2]{gd}.

\medskip
\parindent0pt
{\it Remark $6$.}
The statement of Theorem \ref{tcd} is an unexpected symmetry of the correlation of diagonal slits, which holds already at the level of the {\it finite size correlation} of the gaps (and not just for their correlation)! There seems to be no intuitive reason why such a configuration of holes should have the same correlation as its flip. Indeed, one special case is when the horizontal, white-placed multiplets form some arbitrary ``profile'' recorded by some discrete function $f$, and the vertical, black-placed ones form some other arbitrary profile given by a function $g$. After flipping all these slits, the relative positions of the horizontal ones are related by $g$, and the relative positions of the vertical ones are related by $f$. As $f$ and $g$ are arbitrary, it is not at all clear why the two correlations should be equal.

\parindent15pt
For instance, it follows that the correlation of the four diagonal slits on the left in Figure~\ref{fcb} is the same as the correlation of the four on the right, even though geometrically the two configurations look quite different. This is confirmed also by a direct calculation of the two correlations (using the approach from \cite[Section 4.6]{Kenone}), both of which turn out to equal
\begin{equation*}
\frac{(\pi - 4) (54 \pi^2  - 339 \pi + 532)}{54\pi^4}.
\end{equation*}
%


Note also that the special case $k=2$ is reminiscent of something we discovered in our earlier work \cite{gd}. Namely, \cite[Equation (6.1)]{gd} gives a simple product formula for the correlation of doublets, defined there to be either pairs of consecutive monomers along a fixed diagonal $\ell$ of the square grid, or pairs of consecutive separation defects on that same diagonal $\ell$ (the latter play the role of pairs of neighboring monomers having color opposite to the color of the vertices on $\ell$, in a chessboard coloring of the vertices of the square grid; see \cite[Figure 1.1]{gd} for their definition). One readily verifies that this formula is invariant under swapping the type of the doublets (this amounts to the fact that the 2D Coulomb energy \cite[Equation (4.1)]{gd} of a system consisting of $n$ positive and $n$ negative unit charges is invariant under changing the signs of all charges). The flip-invariance of the correlation implied by Theorem \ref{tcd} discussed above holds in a much more general setting, as the diagonal multiplets can be placed anywhere in the plane (as opposed to being required to be on a common diagonal of the square grid, as in \cite{gd}).

\medskip
Under the assumption that the electrostatic conjecture (see \cite[Equation (2.12)]{gd}, and also \cite[Conjecture 1]{ov}) is true\footnote{ Technically speaking, the definition \eqref{ecey} is slightly different from the definition of the correlation $\bar\omega$ of \cite{gd}, as the toroidal Aztec diamonds graphs $T_{m,n}$ are different from the toroidal grid graphs considered in \cite{gd}. However, one can derive that $\omega=\bar\omega$ from the assumption that the electrostatic conjecture holds (see Remark~8).}, we obtain an explicit formula for the correlation of the vertices that form an odd Aztec rectangle $O_{k,l}$.

\begin{theo}
\label{tce}

\begin{equation}
\omega(O_{k,l})=2^{-\frac {kl}{2}} \omega(O_{0,k+l}).
\label{ece}
\end{equation}

\end{theo}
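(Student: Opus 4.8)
The plan is to derive Theorem \ref{tce} from the exact finite-size identity of Theorem \ref{tcc} by taking the correlation limit and invoking the electrostatic conjecture to control the change in the multiplicative constant. First I would specialize Theorem \ref{tcc} to the case $s=1$: place a single white-placed window $O_{k,l}$ together with a single black-placed window $O_{l,k}$ in $T_{m,n}$, so that the total charge is zero and the resulting toroidal graph has at least one perfect matching. Theorem \ref{tcc} then gives, after dividing both sides by $\M(T_{m,n})$, the finite-size relation
\begin{equation*}
\omega_{n,n}(O_{k,l},O_{l,k})=2^{-kl}\,\omega_{n,n}(\h(O_{k,l}),\ve(O_{l,k})),
\end{equation*}
where on the right the two holes are a horizontal and a vertical diagonal multiplet, each of length $k+l+1$, i.e.\ each congruent to $O_{0,k+l}$ (resp.\ $O_{k+l,0}$). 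Setting $m=n$ and letting $n\to\infty$ turns the finite-size correlations into the correlations defined in \eqref{ecey}, yielding $\omega(O_{k,l},O_{l,k})=2^{-kl}\,\omega(O_{0,k+l},O_{k+l,0})$.

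Next I would invoke the electrostatic conjecture in the form \eqref{EC} to factor these two-hole correlations as the mutual separation grows. Under \eqref{EC}, the leading asymptotics of a two-hole correlation is $c\,\de(O,O')^{\frac12\q(O)\q(O')}$, where the constant $c$ depends only on the multiset of shapes of the two holes. For $\{O_{k,l},O_{l,k}\}$ the charges are $q(O_{k,l})=k+l+1$ and $q(O_{l,k})=-(k+l+1)$, so the distance-exponent is $-\frac12(k+l+1)^2$; the same charges and hence the same exponent arise for $\{O_{0,k+l},O_{k+l,0}\}$. Thus the distance-dependent factors cancel in the ratio, and comparing the two asymptotic expressions in the limit of large separation extracts a relation purely between the shape-constants: writing $c(O,O')$ for the constant attached to the pair, the finite identity above forces
\begin{equation*}
c(O_{k,l},O_{l,k})=2^{-kl}\,c(O_{0,k+l},O_{k+l,0}).
\end{equation*}
Here I would use that, by definition, the shape-constant $c$ for a pair factors (in the version of the conjecture adopted in \cite{sc}, \cite{gd}) as the product of the individual single-hole correlations; since $O_{k,l}$ and $O_{l,k}$ are $90$-degree rotations of one another, symmetry of the square grid under this rotation gives $\omega(O_{k,l})=\omega(O_{l,k})$, and likewise $\omega(O_{0,k+l})=\omega(O_{k+l,0})$. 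Consequently $c(O_{k,l},O_{l,k})=\omega(O_{k,l})^2$ and $c(O_{0,k+l},O_{k+l,0})=\omega(O_{0,k+l})^2$.

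Combining the last two displays gives $\omega(O_{k,l})^2=2^{-kl}\,\omega(O_{0,k+l})^2$, and taking the positive square root (correlations are positive) yields $\omega(O_{k,l})=2^{-kl/2}\,\omega(O_{0,k+l})$, which is \eqref{ece}. I expect the main obstacle to be the careful bookkeeping at the factorization step: one must argue that the \emph{same} distance-exponent appears on both sides so that the distance-dependent parts genuinely cancel, and that the remaining constants really do reduce to products of single-hole correlations under the adopted form of the conjecture (as flagged in the footnote to \eqref{EC}). The rotational-symmetry identifications $\omega(O_{k,l})=\omega(O_{l,k})$ are routine from the invariance of $\omega$ under lattice rotations, but they are essential for reducing a statement about pairs back to a statement about a single odd Aztec rectangle. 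Everything else is a direct limit of the exact identity \eqref{ecc}.
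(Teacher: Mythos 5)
Your proposal matches the paper's proof essentially step for step: specialize Theorem \ref{tcc} to $s=1$, divide by $\M(T_{m,n})$, set $m=n$ and let $n\to\infty$, then invoke the electrostatic conjecture (with the multiplicative constant given by the product of the individual correlations, so that both sides carry the identical distance exponent $-(k+l+1)^2/2$ and the constants yield $[\omega(O_{k,l})]^2=2^{-kl}[\omega(O_{0,k+l})]^2$). Your one extra explicit step --- the $90$-degree rotation identification $\omega(O_{k,l})=\omega(O_{l,k})$ --- is exactly what the paper uses implicitly when it writes the pair constants directly as squares, so the argument is the same.
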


\begin{proof}
Setting $s=1$ in Theorem \ref{tcc} we obtain that for any white-placed $O_{k,l}$ and black-placed $O_{l.k}$ we have
\begin{equation}
\M(T_{m,n}\setminus O_{k,l}\cup O_{l,k}) = 2^{-kl} \M(T_{m,n}\setminus O_{0,k+l}\cup O_{k+l,0}).
\label{ecee}
\end{equation}
Dividing through by $\M(T_{m,n})$ yields
\begin{equation}
\omega_{m,n}(O_{k,l},O_{l,k}) = 2^{-kl} \omega_{m,n}(O_{0,k+l}, O_{k+l,0}).
\label{eceee}
\end{equation}
In turn, setting $m=n$ and sending $n$ to infinity, this gives
\begin{equation}
\omega(O_{k,l},O_{l,k}) = 2^{-kl} \omega(O_{0,k+l}, O_{k+l,0}).
\label{eceeee}
\end{equation}
The electrostatic conjecture implies that the left hand side has asymptotics $[\omega(O_{k,l})]^2\de^{-(k+l+1)^2/2}$ and the right hand side has asymptotics $[\omega(O_{0,k+l})]^2\de^{-(k+l+1)^2/2}$, where $\de$ is the distance between the centers of the considered pairs of holes. This proves
\eqref{ece}.
\end{proof}

In particular, the correlation of the odd Aztec diamond $OD_n=O_{n,n}$ --- which is a measure of how many tilings its exterior has --- satisfies the following relation, which can be regarded as a dual of the Aztec diamond theorem of \cite{EKLP} (since the latter can be stated as $\M(AD_n)=2^{\frac{n(n+1)}{2}}\M(AD_0)$).

\begin{cor}
\label{tcf}

\begin{equation}
\omega(O_{n,n})=2^{-\frac {n^2}{2}} \omega(O_{0,2n}).
\label{ecf}
\end{equation}

\end{cor}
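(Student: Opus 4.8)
The plan is to obtain \eqref{ecf} as the immediate specialization of Theorem \ref{tce} to the diagonal case $k=l=n$. Since the odd Aztec diamond $OD_n$ is by definition the odd Aztec rectangle $O_{n,n}$, the left hand side of \eqref{ece} with $k=l=n$ is precisely $\omega(O_{n,n})$, the quantity we wish to evaluate. So the whole task reduces to reading off what the right hand side of \eqref{ece} becomes under this substitution.

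Concretely, I would substitute $k=l=n$ into the exponent and into the second index on the right hand side of \eqref{ece}. The exponent becomes $-kl/2=-n^2/2$, while the second index becomes $k+l=2n$, so that $\omega(O_{0,k+l})$ turns into $\omega(O_{0,2n})$. This yields exactly
\begin{equation*}
\omega(O_{n,n})=2^{-n^2/2}\,\omega(O_{0,2n}),
\end{equation*}
which is \eqref{ecf}. No additional argument is required, because every hypothesis and every object appearing in Theorem \ref{tce} is reused verbatim: $O_{n,n}$ is a bona fide odd Aztec rectangle, and $\omega$ here is the single-hole correlation of that theorem, defined through \eqref{ecey} and its extension to collections of nonzero total charge.

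There is, strictly speaking, no obstacle in the derivation itself — the mathematical content lies entirely in Theorem \ref{tce}, whose proof in turn rests on the flip identity \eqref{ecc} (taken at $s=1$) together with the electrostatic conjecture. The only point worth flagging is interpretive rather than computational. To read \eqref{ecf} as a dual of the Aztec diamond theorem, one should recognize $\omega(O_{n,n})$ as measuring, up to the normalization by $\M(T_{n,n})$ built into \eqref{ecey}, the number of tilings of the complement of the odd Aztec diamond; then \eqref{ecf} describes the growth of this exterior count in a form parallel to the rewriting $\M(AD_n)=2^{n(n+1)/2}\M(AD_0)$ of \eqref{eia}. That structural parallel, and not any difficulty in the one-line proof, is what justifies stating the corollary separately.
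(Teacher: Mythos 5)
Your proposal is correct and matches the paper exactly: the corollary is stated as an immediate consequence of Theorem \ref{tce}, obtained precisely by the specialization $k=l=n$ in \eqref{ece}, which is why the paper offers no separate proof. Your remarks on the interpretive parallel with $\M(AD_n)=2^{n(n+1)/2}\M(AD_0)$ likewise mirror the paper's own framing of \eqref{ecf} as a dual of the Aztec diamond theorem.
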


\parindent0pt
The exact expression for $\omega(O_{0,m})$ is derived in Remark 8 below (see equations \eqref{eceea}--\eqref{eceeb}).

\medskip
\parindent0pt
{\it Remark $7$.}
Theorem \ref{tce} shows that in the sequence of odd Aztec rectangles $O_{0,n},O_{1,n-1},\dotsc,O_{n,0}$ in which the sum of the width and height is constant, the individual correlations of the sets of monomers they consist of are all the same up to a power of two. It also follows that the correlation is minimum (i.e., it is hardest to accommodate the corresponding structure in a dimer covering of the infinite square grid) in the case when the width and height are equal (for $n$ even) or nearly equal (i.e., one unit apart, for $n$ odd). Both these facts seem highly non-trivial, yet, as we will see from our proofs, they follow in a simple fashion by repeated application of the complementation theorem \cite[Theorem 2.1]{CT}.

\medskip
\parindent0pt
{\it Remark $8$.} The conjectural exact value of $\omega(O_{k,l})$ can be derived as follows. By Theorem \ref{tce}, it is enough to derive the value of $\omega(\underbrace{\circ\cdots\circ}_{m})$ of the correlation of a diagonal multiplet consisting of $m$ monomers. In \cite[Proposition 6.1]{gd} we proved that for the correlation $\bar{\omega}$ we were working with there (see \cite[Section 2]{gd} for its definition), we have
\begin{align}
\bar\omega(\underbrace{\circ\cdots\circ}_{2i})&=
\frac{2^{i(i-1)/2}}{\pi^i}0!\,1!\cdots(2i-1)!
\label{eceea}
\\[5pt]
\bar\omega(\underbrace{\circ\cdots\circ}_{2i+1})&=
2^{i^2/2}\bar\omega(\circ)\, 0!\,1!\cdots(2i)!,
\label{eceeaa}
\end{align}
where, if $A= 1.28242712...$ is the Glaisher-Kinkelin constant\footnote{ The Glaisher-Kinkelin constant  $A$ (see \cite{Glaish}) is defined by the limit 
$\lim_{n\to\infty}
\dfrac
 {0!\,1!\,\cdots\,(n-1)!}
 {n^{\frac{n^2}{2}-\frac{1}{12}}\,(2\pi)^{\frac{n}{2}}\,e^{-\frac{3n^2}{4}}}
=
\dfrac
 {e^{\frac{1}{12}}}
 {A}
$.}, we have
\begin{equation}
\bar\omega(\circ)=\frac{e^{\frac14}}{2^{\frac{7}{24}}A^3}.
\label{eceeb}
\end{equation}
The electrostatic conjecture for our correlation $\omega$ (which should hold, as $\omega$ is a  natural square lattice analog of the correlation in \cite[Conjecture 1]{ov}) implies that\footnote{ Here $\de$ denotes the Euclidean distance.}
\begin{equation}
\omega(\circ,\bullet)\sim \omega(\circ)\,\omega(\bullet)\de(\circ,\bullet)^{-1/2},
\label{eceec}
\end{equation}
in the limit of large separation between a white monomer $\circ$ on a lattice diagonal $\ell$ and a black monomer $\bullet$ on the diagonal $\ell'$ just below $\ell$. On the other hand, a special case of \cite[Theorem 3.1]{gd} implies that for the correlation $\bar\omega$ defined in \cite[Section 2]{gd} we have
\begin{equation}
\bar\omega(\circ,\bullet)\sim \bar\omega(\circ)\,\bar\omega(\bullet)\de(\circ,\bullet)^{-1/2}.
\label{eceed}
\end{equation}
However, the correlations $\omega$ and $\bar\omega$  should only differ by a multiplicative constant (coming from the fact that the two definitions use different normalizations), as both are defined by including the holes in large regions --- growing to infinity --- in locations whose dimer statistics is not distorted in the limit. Thus we should have $\omega(\circ)=c\,\bar\omega(\circ)$, for some constant $c>0$. But since by translation invariance $\omega(\circ)=\omega(\bullet)$ and $\bar\omega(\circ)=\bar\omega(\bullet)$, equations \eqref{eceec} and \eqref{eceed} imply $c=1$, which shows that $\omega$ is in fact equal to $\bar\omega$. Thus formulas \eqref{eceea} and \eqref{eceeaa} hold for $\omega$ in place of $\bar\omega$ as well.

\parindent15pt



%
%


\section{Proof of Theorems \ref{tba}--\ref{tbd}}

For all these proofs it will be convenient to phrase the statements in the equivalent language of perfect matchings of the graphs that are the planar duals of the regions involved in Theorems~\ref{tba}--\ref{tbd}. This is because the key to our proofs is to apply the complementation theorem for cellular graphs we obtained in \cite{CT} (see Theorem 2.1 there).

We now recall this theorem. We use for this purpose the graph $H$ indicated in Figure \ref{fda} by the thick black contours.

\begin{figure}[t]
\centerline{
\hfill
{\includegraphics[width=0.40\textwidth]{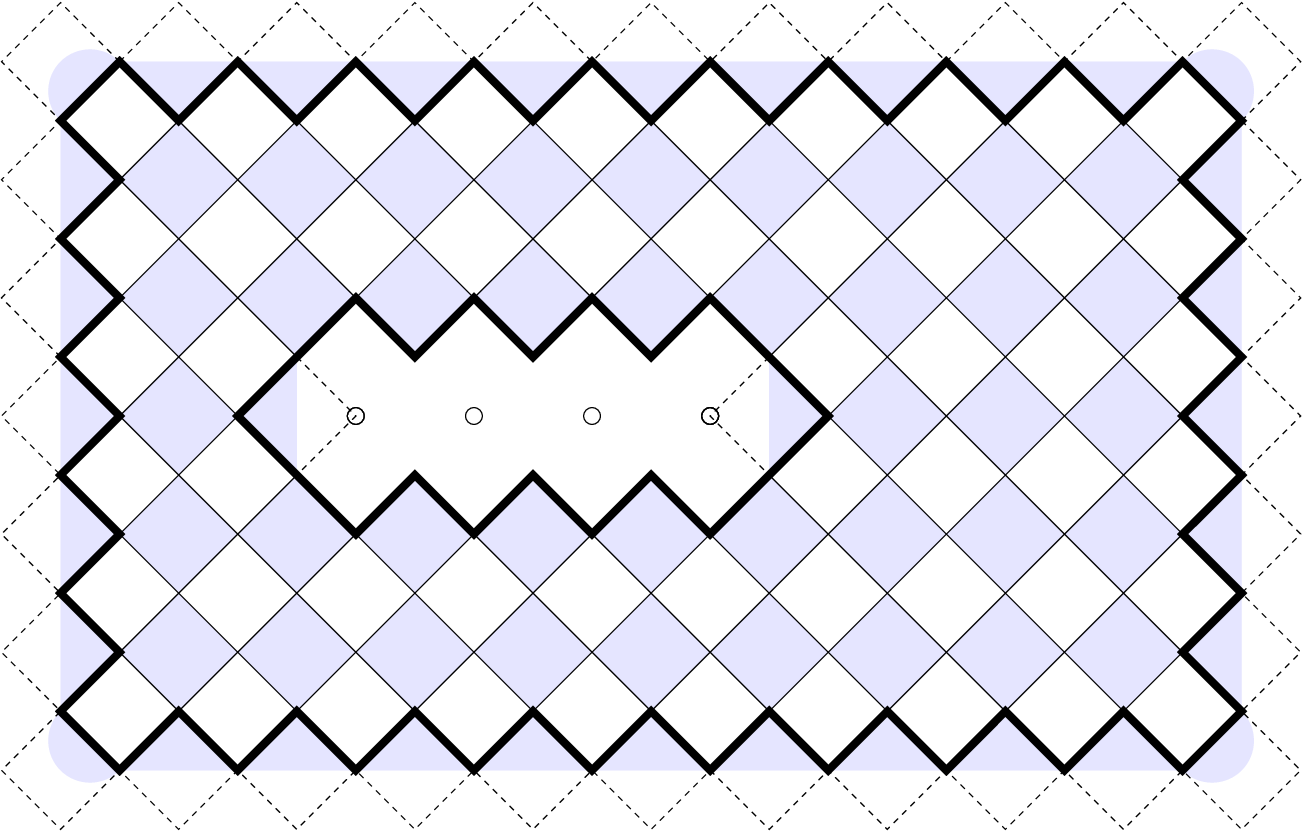}}
\hfill
{\includegraphics[width=0.40\textwidth]{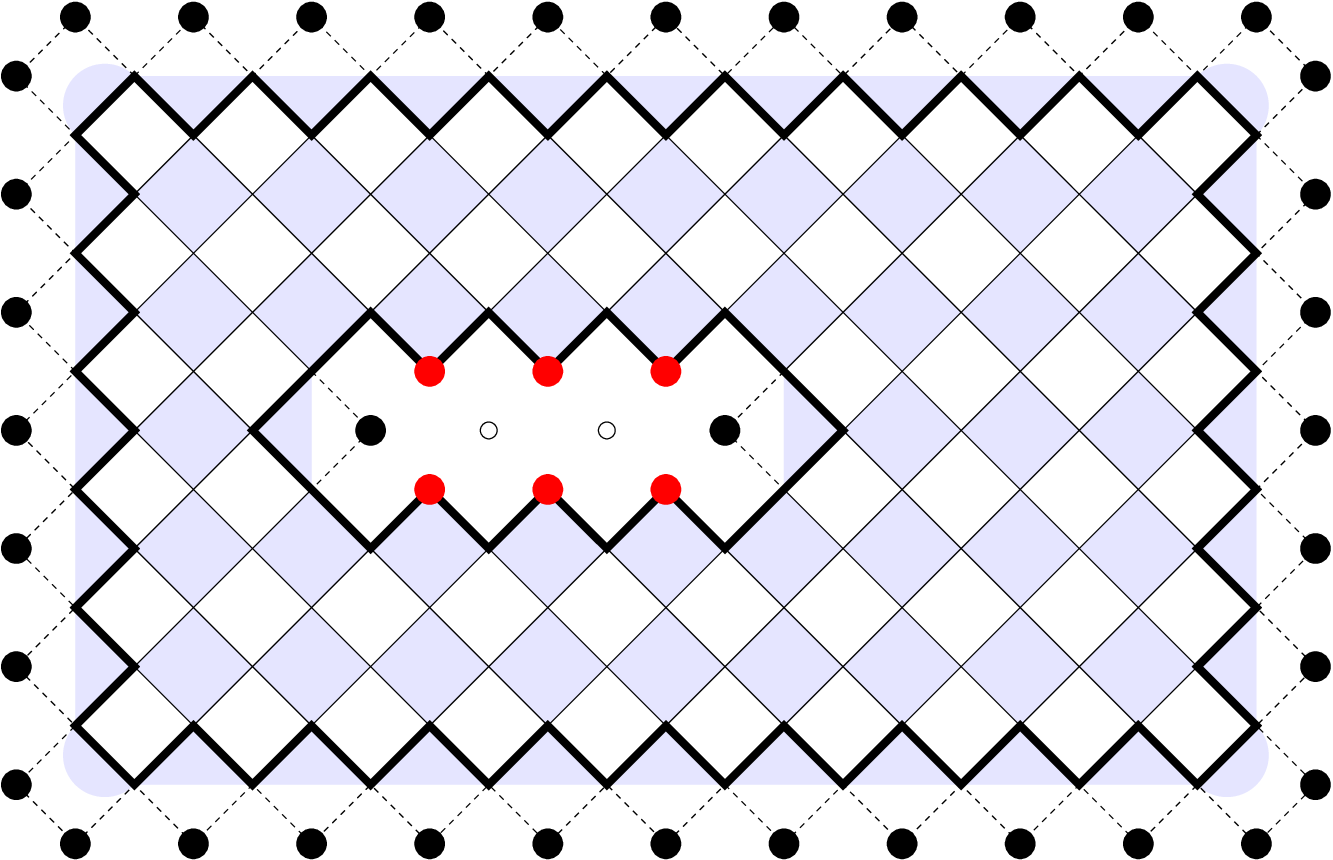}}
\hfill
}
\vskip0.1in
\caption{ {\it Left.} A graph $H$ (the portion of the grid graph contained between the solid black contours) and its cellular completion $G$ (the graph obtained by  adding the dotted lines); the cells of $G$ are shaded (completely if all 4 edges of the cell are contained in $H$, partially otherwise).  {\it Right.} The extremal vertices of $G$ that are in $H$ are indicated by red dots, and those that are not in $H$ by black dots.}
\vskip-0.1in
\label{fda}
\end{figure}

\begin{figure}[t]
\centerline{
{\includegraphics[width=0.40\textwidth]{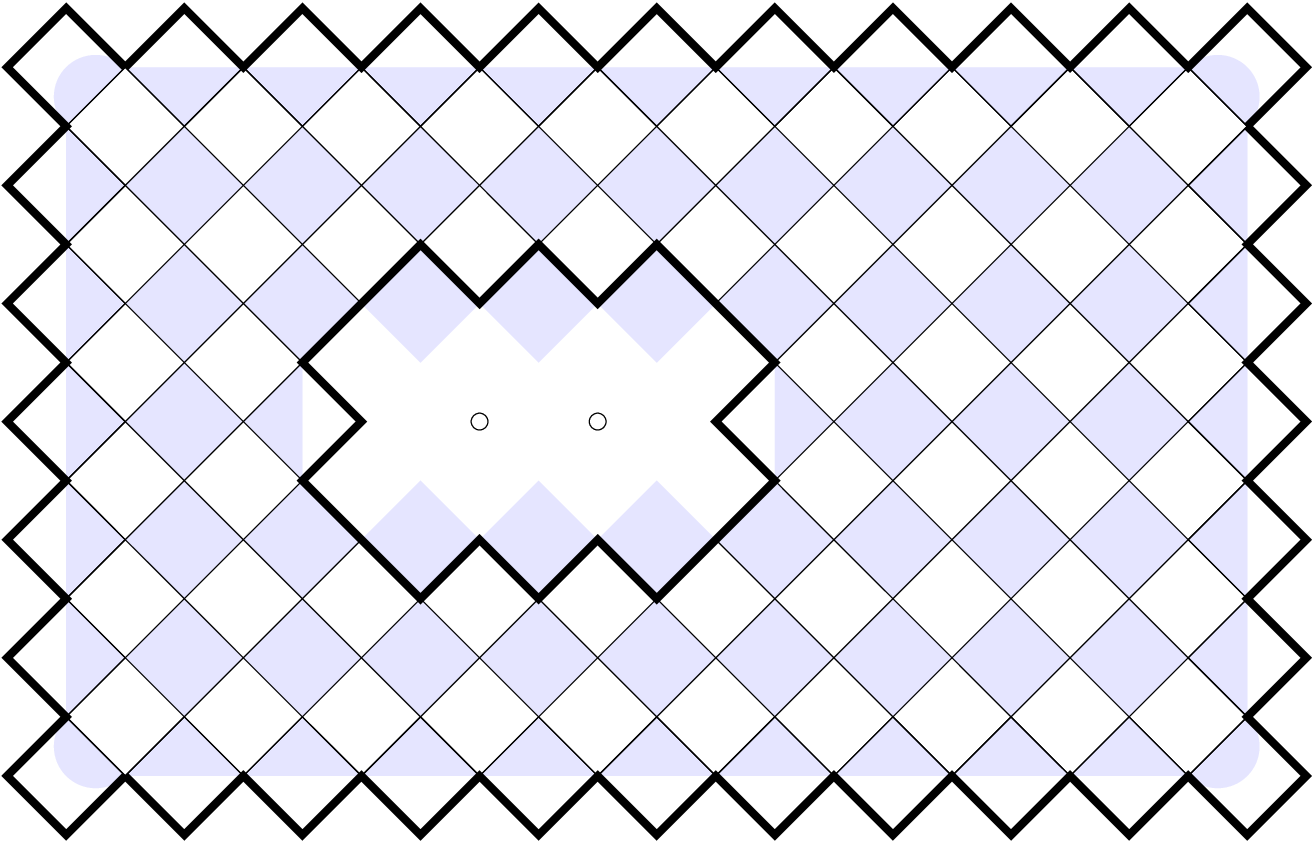}}
}
\vskip0.1in
\caption{ The complement $H'$ (the portion contained between the solid black contours) of $H$ with respect to its cellular completion $G$ is the induced subgraph of $G$ whose vertex set is obtained from the vertex set of $H$ by discarding all extremal vertices of $G$ that are in $H$, and including all of them that are not in $H$.}
\vskip-0.1in
\label{fdb}
\end{figure}

Color the unit squares of the square grid in chessboard fashion as indicated on the left in Figure~\ref{fda}, and consider the shaded unit squares that contain at least one edge of $H$ (in the figure, the instances when not all the four edges of the shaded unit square belong to $H$ are shown as only partially shaded). Let $G$ be the graph consisting of the union of all 4-cycles of the grid which enclose a (completely or partially) shaded face.

Then $G$ is an instance of a {\it cellular graph}. Its shaded 4-cycles are called {\it cells}. The cells can be stringed up along grid diagonals (horizontal or vertical) into maximal contiguous runs, called {\it paths of cells}. Each of the two vertices in a path that are furthest apart from one another is called an {\it extremal vertex} of $G$; the set of extremal vertices of $G$ is denoted by $X(G)$. Note that $(i)$ $H$ is an induced subgraph of $G$, and $(ii)$ all the vertices of $G$ that are not in $H$ are extremal vertices. In such instances we say that $G$ is a {\it cellular completion of $H$}.

The {\it complement of $H$} with respect to its cellular completion $G$ is the induced subgraph $H'$ of $G$ whose vertex set is determined by the equation\footnote{ $V(G)$ denotes the set of vertices of the graph $G$.} $V(H')\triangle V(H)=X(G)$. In other words, $V(H')$ is the set obtained from $V(H)$ after performing the following operation at each end of every path of cells of $G$: if the corresponding extremal vertex belongs to $V(H)$, remove it; otherwise, include it.

For our graph $H$, this construction is illustrated in Figures \ref{fda} and \ref{fdb}. The red dots in the picture on the right in Figure \ref{fda} indicate the extremal vertices of $G$ that are in $H$, while the black dots mark the ones that are not in $H$. The complement $H'$ of $H$ is the portion of the grid graph enclosed between the thick black contours in Figure \ref{fdb}.

The complementation theorem states that the number of perfect matchings of $H$ and the number of perfect matchings of its complement $H'$ differ only by a power of 2. More precisely, define the {\it type} $\tau(L)$ of a path of cells $L$ to be 1 less than the number of closed ends of $L$ (an end of~$L$ is said to be closed if $H$ contains the corresponding extremal vertex of $G$). Then we have the following relationship between the numbers of perfect matchings of $H$ and $H'$.

\begin{theo} \cite[Theorem 2.1 (Complementation Theorem)]{FT}
Let $G$ be a cellular graph with its set of cells partitioned into disjoint paths $L_{1},L_{2},\dotsc,L_{k}$. If $G$ is a cellular 
completion of the subgraph $H$, and $H'$ is the complement of $H$ with respect to $G$, we have
\begin{equation}
\M(H)=2^{t}\M(H'),
\label{eda}
\end{equation}
where the exponent of $2$ is $t=\tau(L_{1})+\tau(L_{2})+\cdots+\tau(L_{k})$.
\end{theo}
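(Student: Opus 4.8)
The plan is to prove the identity $\M(H)=2^{t}\M(H')$ by localizing the analysis to the individual cells and paths of cells of $G$ and then assembling the contributions. The fundamental source of the factors of $2$ is the elementary observation that a single $4$-cycle has exactly two perfect matchings: each time a cell can be resolved internally in a perfect matching it contributes an independent binary choice, hence a factor of $2$. So the whole proof is a matter of deciding, cell by cell, how many such free internal choices a matching affords, and relating this count for $H$ to that for $H'$.

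First I would isolate the structure of a single path of cells $L=(C_{1},\dots,C_{p})$. As one checks from the chessboard coloring, consecutive cells meet in a single shared vertex, and all of these shared (``joint'') vertices together with the two extremal vertices form a monochromatic ``spine'' $s_{0},s_{1},\dots,s_{p}$, while the remaining two corners of each cell (``flanks'') carry the opposite color. Since $G$ is a cellular completion of $H$, every vertex of $G$ that is not extremal lies in $H$; thus all flanks and all joints $s_{1},\dots,s_{p-1}$ belong to $H$, and the only vertices whose membership is in doubt are the two extremal vertices $s_{0},s_{p}$ --- which is precisely the data recorded by whether each end of $L$ is closed. Passing from $H$ to $H'$ flips exactly these two vertices, leaving all joints and flanks untouched.

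Next I would establish the single-path case by induction on $p$, peeling off the terminal cell $C_{p}$ at each step. The local lemma to prove is that detaching $C_{p}$ and toggling the extremal vertex $s_{p}$ changes the number of perfect matchings by a controlled power of $2$: conditioning on how the spine vertices of $C_{p}$ are matched --- internally within the path or externally to the rest of the graph $R$ --- the two internal matchings of the $4$-cycle supply the binary choice responsible for the factor $2$, while the external case is absorbed into the smaller instance. Careful bookkeeping of the two ends should yield the boundary correction $\tau(L)=c-1$, where $c$ is the number of closed ends, the ``$-1$'' reflecting that one spine degree of freedom is consumed in linking the path to its surroundings. This is exactly what makes the formula consistent under the complementation involution, since complementing swaps $c\leftrightarrow 2-c$ and hence $\tau\leftrightarrow-\tau$, and one checks $\M(H)=2^{\tau}\M(H')$ is equivalent to its dual $\M(H')=2^{-\tau}\M(H)$.

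Finally I would assemble the paths. Because the cells of $G$ are partitioned into paths $L_{1},\dots,L_{k}$, repeated application of the single-path result gives the global factor $2^{\sum_{i}\tau(L_{i})}=2^{t}$. The main obstacle, and the step demanding the most care, is the interaction between paths: a vertex that is a flank of a cell in one path may serve as a spine --- indeed as an extremal --- vertex of a perpendicular path, so toggling extremal vertices for one path can alter the graph seen by another. I would handle this by running the induction on the total number of cells rather than literally one path at a time, peeling a terminal cell of some path and showing that the local matching identity is insensitive to the external attachments of its spine and flank vertices. Verifying this robustness, together with the degenerate edge cases (paths of length one, and an end shared between two paths, where $s_{0}$ or $s_{p}$ has no room to be matched internally), is where the genuine work lies; everything else is the assembly of these local factors.
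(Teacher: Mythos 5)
This statement is not proved in the paper at all --- it is quoted from \cite{CT} (Theorem 2.1 there; the citation key in the source notwithstanding), so your proposal can only be measured against that original, and substantially involved, argument. Measured so, your sketch has a structural flaw at its foundation: you assert that passing from $H$ to $H'$ ``flips exactly these two vertices'' (the spine ends of a given path), ``leaving all joints and flanks untouched.'' That is false: $V(H')$ is obtained by toggling the full set $X(G)$ of extremal vertices, and a flank or joint vertex of one path is in general an extremal vertex of a \emph{transversal} maximal run of cells. This is visible in the paper's own running example (Figures 8--9): complementing the holey Aztec rectangle enlarges it on all four sides, so in particular the top and bottom flank vertices of the topmost and bottommost horizontal paths get toggled, even though the partition used to compute $t$ consists of horizontal paths only. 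You do flag this interaction at the end as ``where the genuine work lies,'' but your single-path lemma, as formulated, is inconsistent with it, and postponing it to an induction on the total number of cells names the difficulty rather than resolving it.

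The cell-peeling mechanism itself also cannot work as described. Perfect-matching counts do not satisfy local product identities: the factor $2^{\tau(L)}$ is carried entirely by the two ends of a whole path (its interior cells contribute nothing to $t$), so no identity of the form ``detach $C_p$, toggle $s_p$, gain a controlled power of $2$'' can be insensitive to the external attachments of the spine and flank vertices. A minimal test already breaks the isolated version: for a single cell with both extremal vertices present one has $\M=2$, while toggling only its two spine ends leaves two isolated flank vertices with no matching at all; the count is rescued only by the transversal toggles coming from $X(G)$, which shows the theorem is irreducibly global and no per-cell, or even per-path-in-isolation, statement is true. Negative exponents --- as in the paper's own computation $\M(H)=2^{-8}\M(H')$, where the complement has \emph{more} matchings --- likewise have no mechanism in the binary-choice-per-cell picture beyond your duality remark, which is a consistency check, not a proof. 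The proof in \cite{CT} has to analyze how matchings interact with an entire path of cells attached arbitrarily to the rest of the graph along its spine and flanks; that analysis is precisely the content your outline defers, so the proposal as it stands has a genuine gap rather than an alternative route.
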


Let us apply this theorem to our graph $H$ shown in Figure \ref{fda}. 
Consider the partition of the set of cells of $G$ into horizontal paths. All the paths are contained in the 7 rows of 4-cycles of the grid that contain some shaded (or partially shaded) cells. The top 3 of these rows contain one path of cells each, and each of them is of type $-1$; the same holds for the bottom 3 rows. The remaining  (middle) row contains 2 paths of cells, each of type $-1$ as well. Therefore, in \eqref{eda} we have $t=6\cdot(-1)+2\cdot(-1)$, and we obtain
\begin{equation}
\M(H)=2^{-8}\M(H').
\label{edbb}
\end{equation}

{\it Proof of Theorem $\ref{tba}$.} The above example generalizes. Let $H$ be the planar dual graph of the Aztec rectangle region with odd windows $AR_{m,m+a}\setminus O_{0,a_1}\cup\cdots\cup O_{0,a_s}$ (for simplicity, we drop from our notation the superscripts indicating the positions of the centers of the holes along the symmetry axis $\ell$). Apply the complementation theorem to $H$.

We can use Figures \ref{fda} and \ref{fdb} to illustrate the general situation; they correspond to the special case when $m=6$, $s=1$ and $a_1=3$ (so $a=\sum_{i=1}^s(a_i+1)=4$).

The definition of the cellular completion of $G$ we used in our example above readily extends to the general case. Consider the partition of the set of cells of $G$ into horizontal paths. All the paths are contained in the $m+1$ rows of 4-cycles of the grid that contain some shaded (or partially shaded) cells. The top $m/2$ of these rows\footnote{ Recall that in the statement of Theorem \ref{tca} it is assumed that $m$ is even.} contain one path of cells each, and each of them is of type $-1$; the same holds for the bottom $m/2$ rows. The remaining  (middle) row contains $s+1$ paths of cells, each of type $-1$ as well. Therefore, in \eqref{eda} we have $t=m\cdot(-1)+(s+1)\cdot(-1)$, and we obtain
\begin{equation}
\M(H)=2^{-m-(s+1)}\M(H').
\label{edb}
\end{equation}
Since we will apply the complementation theorem repeatedly, we will find it convenient to show its application in a single figure. To this end, we record the information in Figures \ref{fda} and \ref{fdb} in the picture on the left in Figure \ref{fdc}: the graph indicated by the solid black contours is the graph $H$ to which we apply the complementation theorem, the shading indicates the cells of the cellular completion $G$ of $H$, and the graph indicated by the thick magenta contours is its resulting complement $H'$.

The crucial fact upon which our proof hinges is the way the outer and inner boundaries of our regions evolve under such applications of the complementation theorem. Namely, it is clear that the outer boundary of the resulting $H'$ is the Aztec rectangle $AR_{m+1,m+a+1}$; so the outer boundary of $H$, which is $AR_{m,m+a}$, evolves into $AR_{m+1,m+a+1}$ by applying the complementation theorem in this way. As far as the inner boundaries are concerned, by definition, the $i$th hole in $H$ is obtained by removing a copy of the odd Aztec rectangle graph $O_{0,a_i}$. On the other hand, one readily sees that by the application of the complementation theorem, the corresponding resulting hole in $H'$ is obtained by removing a copy of the odd Aztec rectangle graph $O_{1,a_i-1}$, placed so that it has the same center as the copy of $O_{0,a_i}$ removed from $H$. We summarize this by saying that, under this application of the complementation theorem, the boundary of the $i$th hole evolved from having shape $O_{0,a_i}$ to having shape $O_{1,a_i-1}$. It follows that $H'$ is precisely the Aztec rectangle with odd windows $AR_{m+1,m+a+1}\setminus O_{1,a_1-1}\cup\cdots\cup O_{1,a_s-1}$, with holes placed so that their centers agree with the centers of the corresponding holes in $H$.

\begin{figure}[t]
\centerline{
\hfill
{\includegraphics[width=0.40\textwidth]{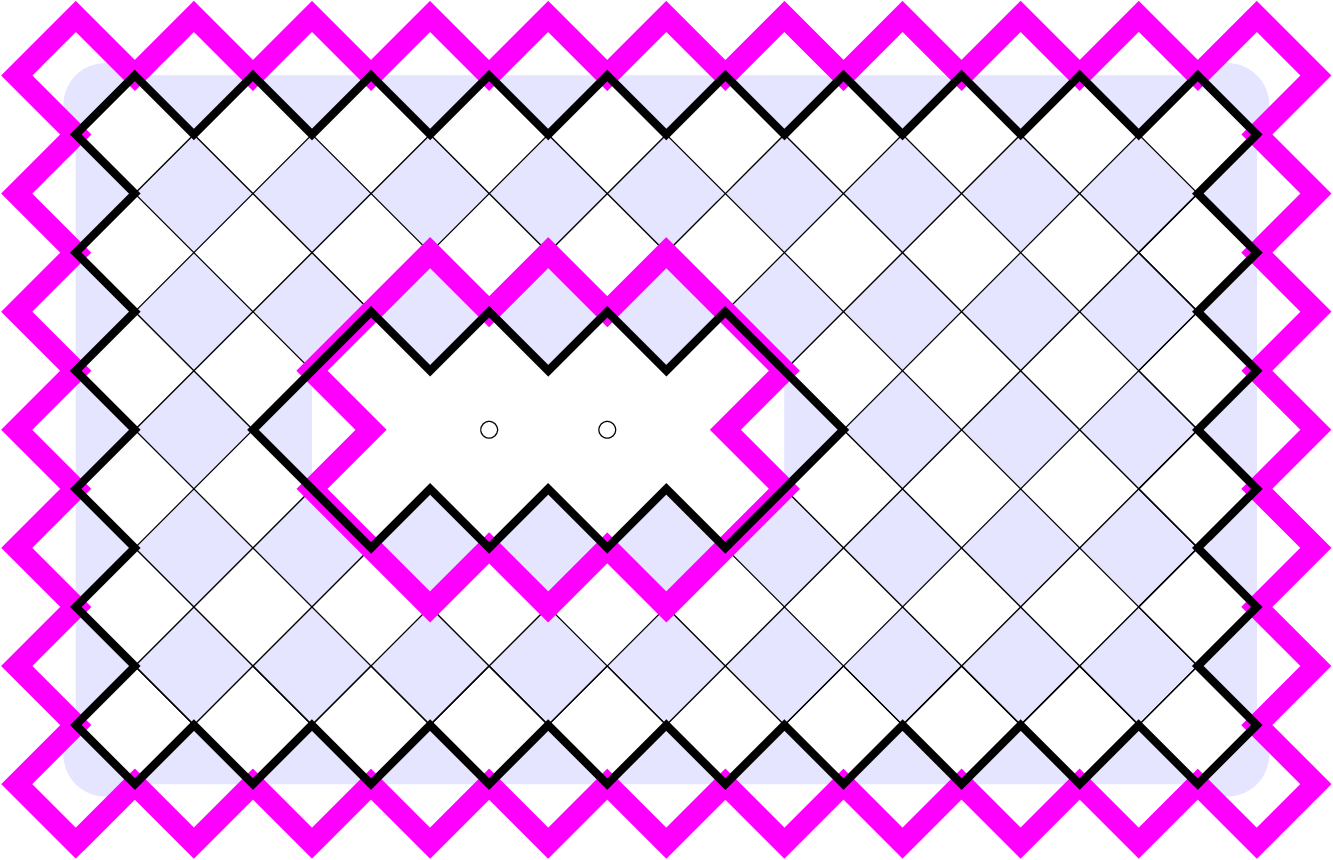}}
\hfill
{\includegraphics[width=0.40\textwidth]{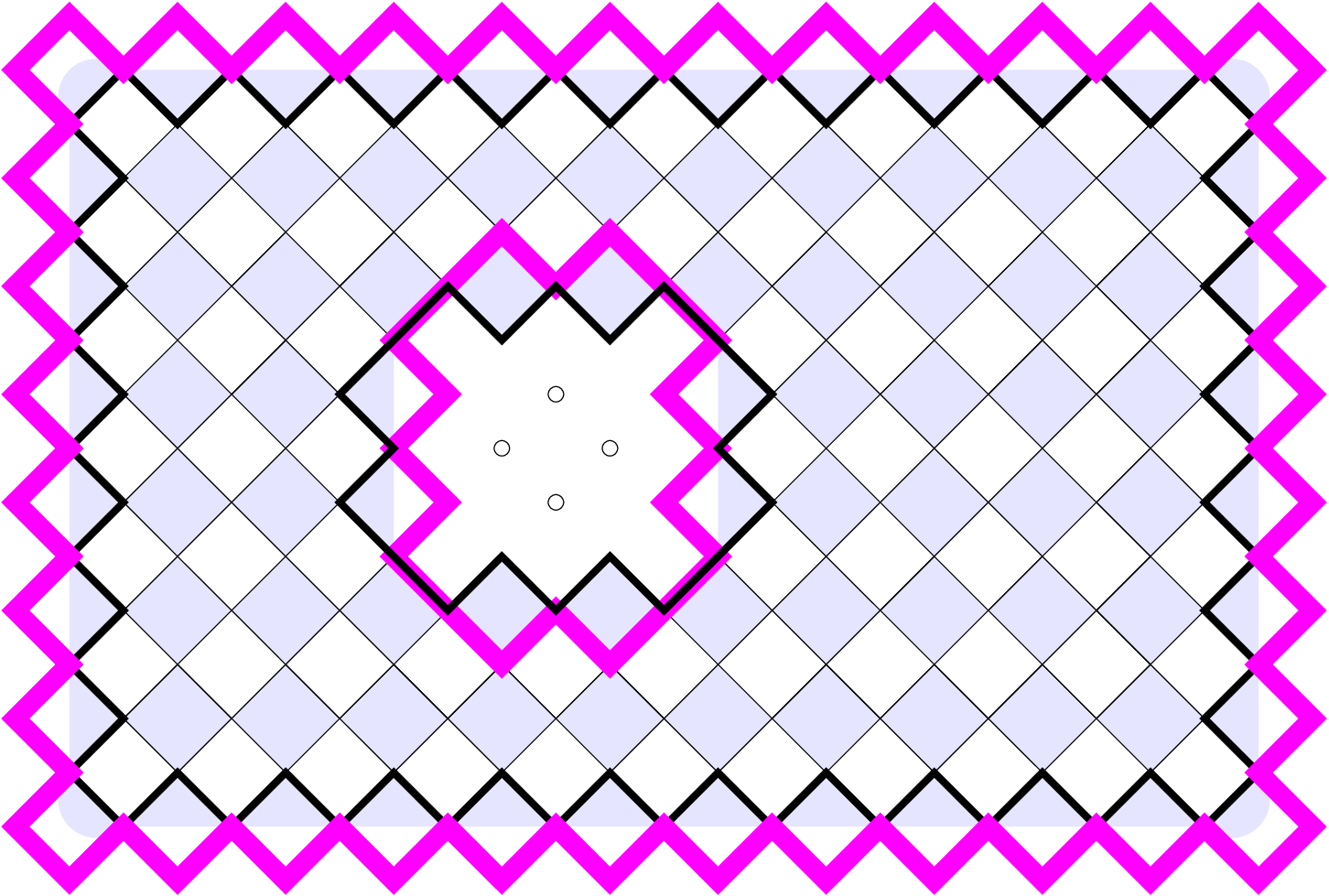}}
\hfill
}
\vskip0.1in
\caption{ {\it Left.} Applying the complementation theorem to the graph $H=AR_{m,m+a}\setminus O_{0,a_1}\cup\cdots\cup O_{0,a_s}$ (here $m=6$, $s=1$ and $a_1=3$, so that $a=\sum_{i=1}^s(a_i+1)=4$); $H$ is the portion of the grid graph between the solid black contours. The resulting complement $H'$ (the portion between the thick magenta contours) is the graph $AR_{m+1,m+a+1}\setminus O_{1,a_1-1}\cup\cdots\cup O_{1,a_s-1}$.  {\it Right.} The next application of the complementation theorem turns $AR_{m+1,m+a+1}\setminus O_{1,a_1-1}\cup\cdots\cup O_{1,a_s-1}$ into $AR_{m+2,m+a+2}\setminus O_{2,a_1-2}\cup\cdots\cup O_{2,a_s-2}$.}
\vskip-0.1in
\label{fdc}
\end{figure}

\begin{figure}[t]
\centerline{
{\includegraphics[width=0.40\textwidth]{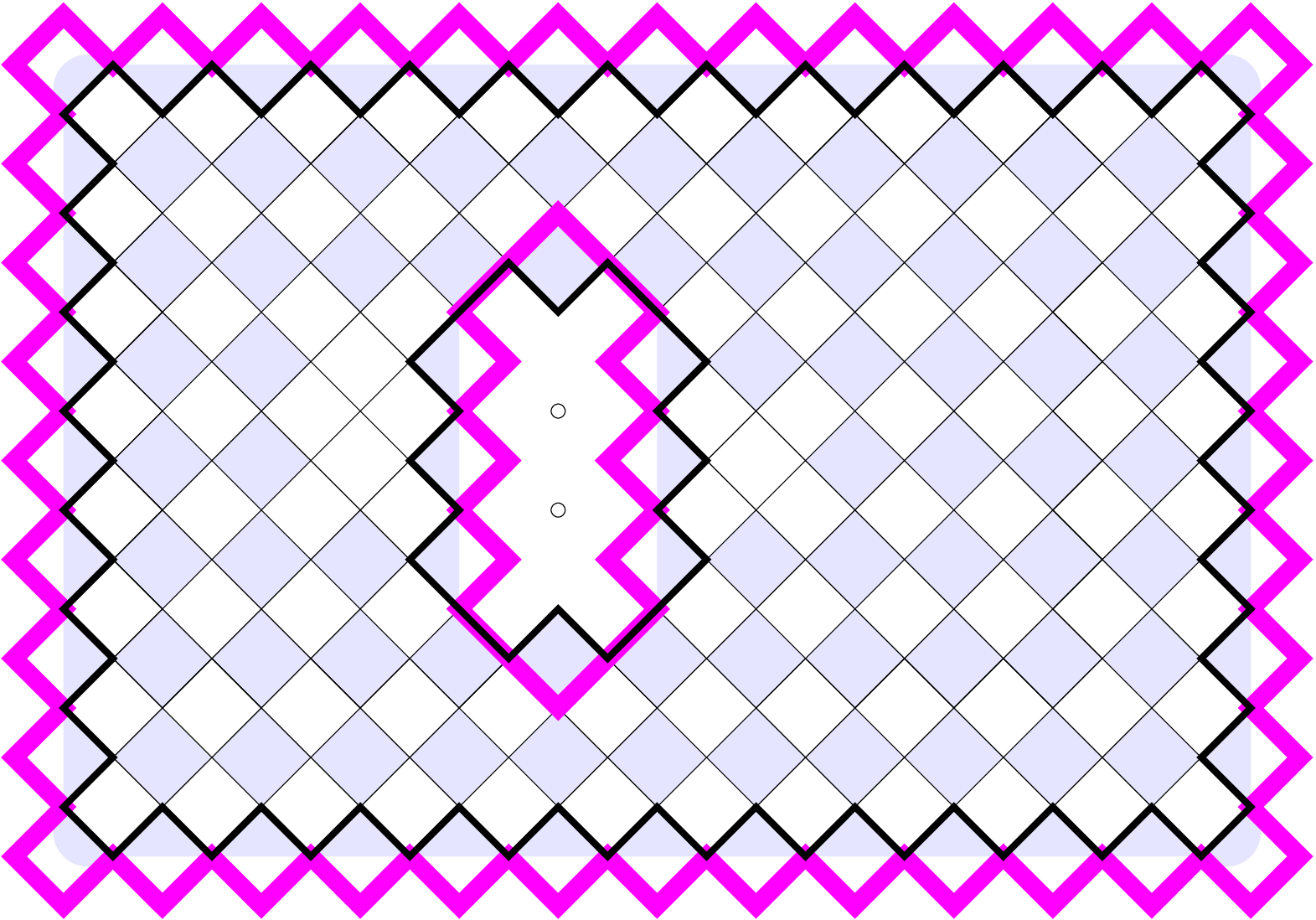}}
}
\vskip0.1in
\caption{ One more application of the complementation theorem, turning the graph $AR_{m+2,m+a+2}\setminus O_{2,a_1-2}\cup\cdots\cup O_{2,a_s-2}$ which resulted on the right in Figure \ref{fdc} into $AR_{m+3,m+a+3}\setminus O_{3,a_1-3}\cup\cdots\cup O_{3,a_s-3}$.}
\vskip-0.1in
\label{fdd}
\end{figure}

This pattern continues. Applying the complementation theorem next to $H=AR_{m+1,m+a+1}\setminus O_{1,a_1-1}\cup\cdots\cup O_{1,a_s-1}$ --- with the shading of the cells chosen so that some of the unit squares outside $H$ are partially shaded --- the resulting $H'$ is readily seen to be precisely the region $AR_{m+2,m+a+2}\setminus O_{2,a_1-2}\cup\cdots\cup O_{2,a_s-2}$. To obtain the exponent of 2, partition the set of cells as before into horizontal paths. There are again $m$ uninterrupted paths stretching across the graph, each of type $-1$. This time there are two rows of 4-cycles each of which contains $s+1$ shorter paths, all of type $-1$. Therefore, the exponent of 2 at this application of the complementation theorem is $m\cdot(-1)+2(s+1)\cdot(-1)=-m-2(s+1)$.

Continuing to apply the complementation theorem this way, at the $k$th application (provided $k\leq a_i$ for $i=1,\dotsc,s$, as we explain in the next paragraph) we are applying it to $H=AR_{m+k-1,m+a+k-1}\setminus O_{k-1,a_1-k+1}\cup\cdots\cup O_{k-1,a_s-k+1}$, the resulting $H'$ is $AR_{m+k,m+a+k}\setminus O_{k,a_1-k}\cup\cdots\cup O_{k,a_s-k}$, and the exponent of two is $m\cdot(-1)+k(s+1)\cdot(-1)=-m-k(s+1)$. Putting all this together we obtain that
\begin{align}
\M(AR_{m,m+a}\setminus O_{0,a_1}\cup\cdots\cup O_{0,a_s})
&=
2^{\sum_{j=1}^k(-m-j(s+1))}\M(AR_{m+k,m+a+k}\setminus O_{k,a_1-k}\cup\cdots\cup O_{k,a_s-k})
\nonumber
\\
&=
2^{-km-{k+1\choose 2}(s+1)}\M(AR_{m+k,m+a+k}\setminus O_{k,a_1-k}\cup\cdots\cup O_{k,a_s-k}),
\label{edc}
\end{align}
which proves \eqref{ebb}.

The reason we need $k\leq a_i$ for all $i$ is the following. Let $a_j$ be the minimum of the $a_i$'s. Then, after $a_j$ applications of the complementation theorem, the boundary of the $j$th hole has evolved into having shape $O_{a_j,0}$, and a further application of the complementation theorem --- which involves switching the shaded squares of the square grid for the ones that were not shaded at its previous application --- would result in a graph that is not embedded in the grid graph.
$\hfill\square$

\medskip
{\it Proof of Theorem $\ref{tbb}$.} There are two ways to choose the shading when we apply the complementation theorem to a subgraph of the grid graph, corresponding to the white or the black unit squares of the square grid when regarded as an infinite chessboard. For a subgraph $H$ whose outer boundary is that of an Aztec rectangle $AR_{m,n}$, one of these choices will make the boundary of $H'$ to be the larger Aztec rectangle $AR_{m+1,n+1}$, and the other will make it the smaller $AR_{m-1,n-1}$. If in addition $H$ has a hole in the shape of an odd Aztec rectangle $O_{k,l}$, for one shading the corresponding hole of $H'$ will evolve into the shape $O_{k+1,l-1}$ of bigger height and smaller width, while for the other the hole will evolve into the shape $O_{k-1,l+1}$ of smaller height and bigger width. One crucial fact in the proof of Theorem \ref{tba} above was that the holes were placed so that the shading that made the outer boundary become bigger also made the holes become taller and less wide. This fact holds true also for the planar dual graphs of the $AR'$ regions in Theorem \ref{tbb}, as one can readily see by comparing the two graphs on the left in Figure \ref{fbaa}.

Thanks to this, the same arguments as in the above proof apply ad litteram for the proof of Theorem \ref{tbb}. The only difference is that the holes are now placed symmetrically about $\ell'$ --- the translation by one unit in the southeast direction of the horizontal symmetry axis $\ell$ of the outer boundary. Since with each application of the complementation theorem this description of the placement of the evolving boundaries remains valid, this relates the number of perfect matchings of the $R'$-graph on the right hand side of \eqref{ebd} to that of the planar dual of the region on the left hand side of \eqref{ebd}, and the two are related by the same multiplicative constant as in \eqref{ebb}.
$\hfill\square$

\medskip
{\it Proof of Theorem $\ref{tbc}$.} The same approach as in the above two proofs works. Again, we start from an $R''$-graph and apply to it repeatedly the complementation theorem in such a way that at each application the height parameter of the odd Aztec rectangular holes (which is 0 to start with) increases one unit. The main difference compared to the above two proofs (and this will be the same for the proof of Theorem \ref{tbd}) is that the shading of the cells that achieves this has the effect of evolving the outer boundary into {\it smaller} Aztec rectangles, instead of larger ones. This can readily be seen in Figure \ref{fde}.

\begin{figure}[t]
\centerline{
{\includegraphics[width=0.30\textwidth]{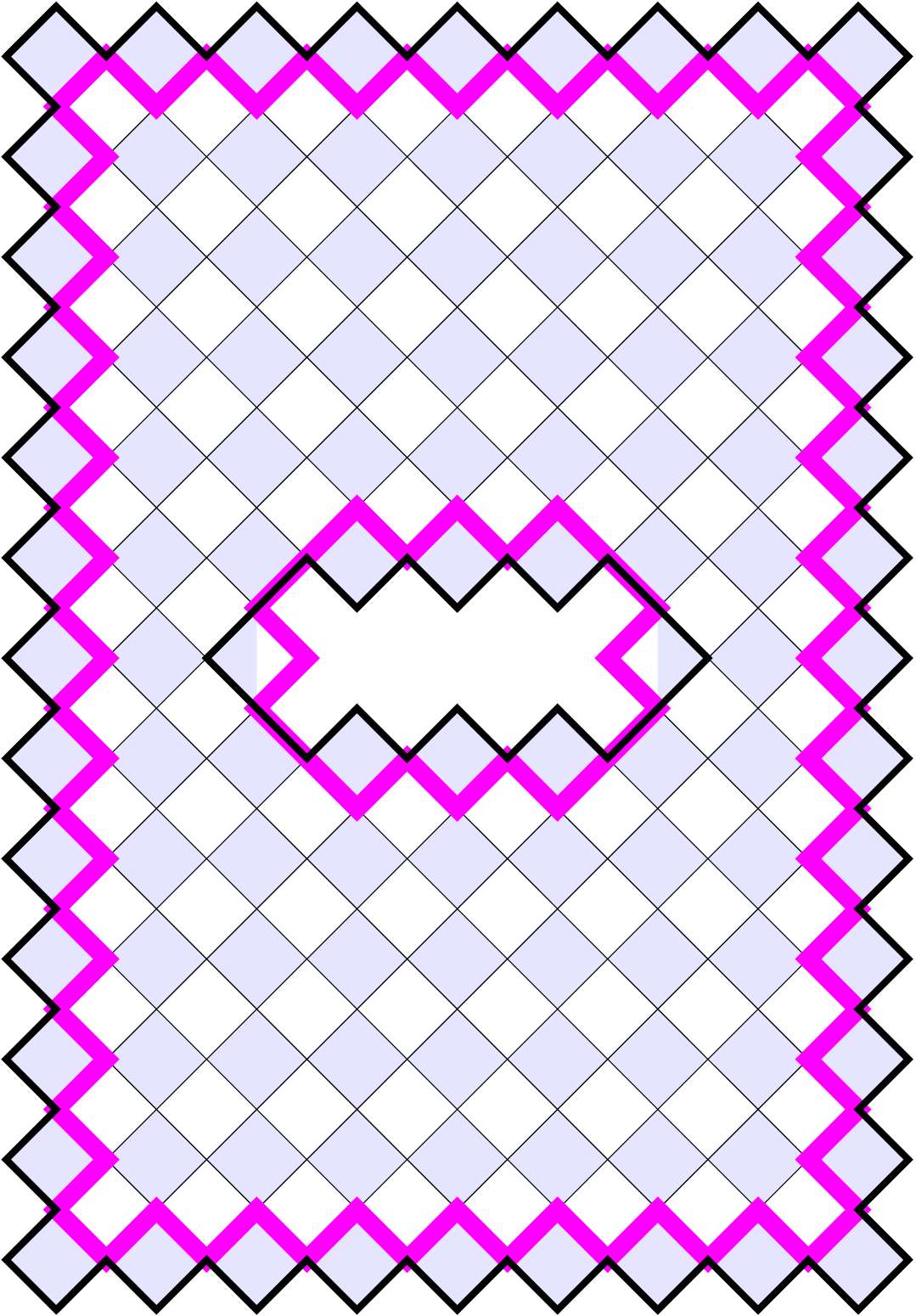}}
\hfill
{\includegraphics[width=0.30\textwidth]{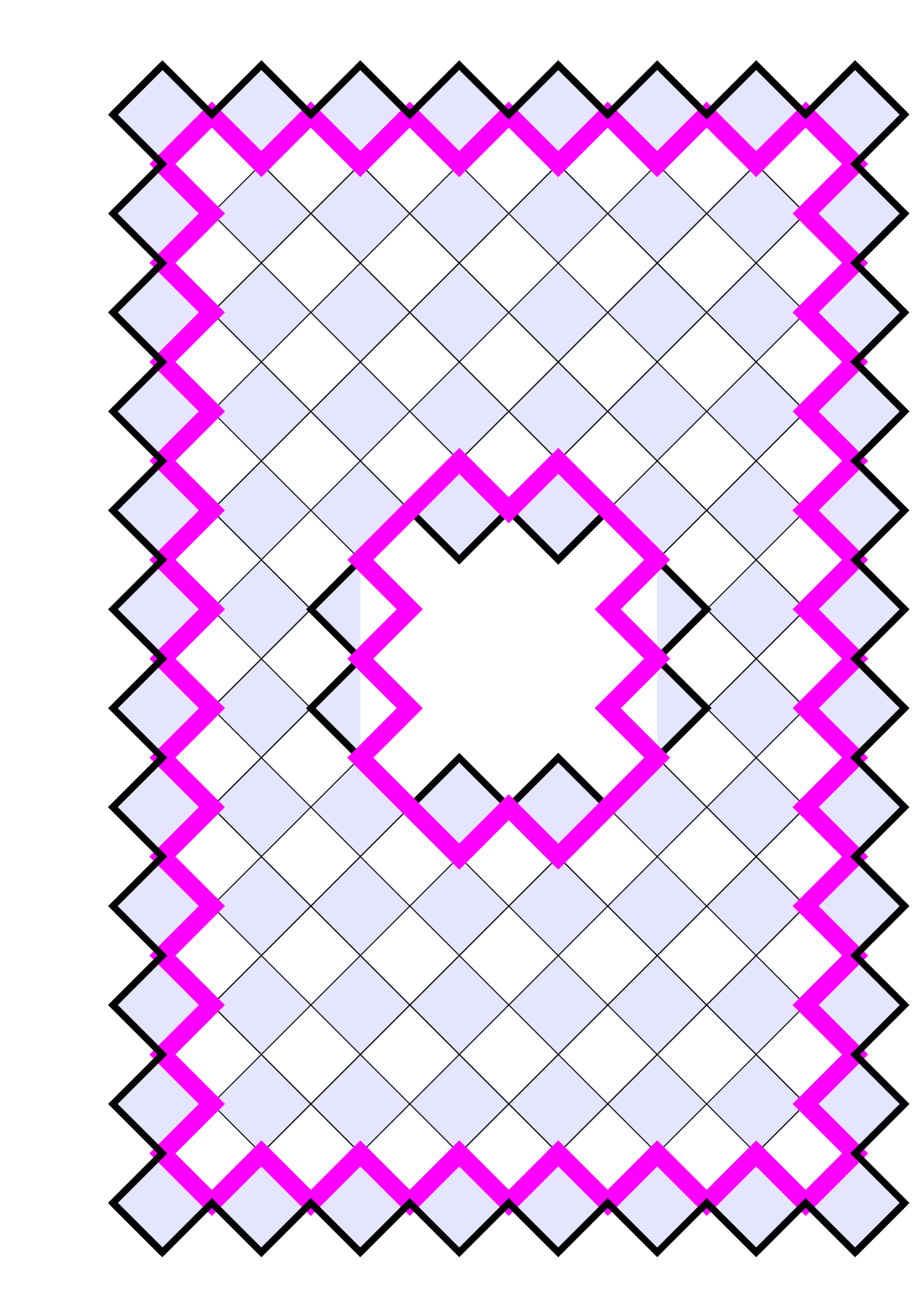}}
\hfill
{\includegraphics[width=0.30\textwidth]{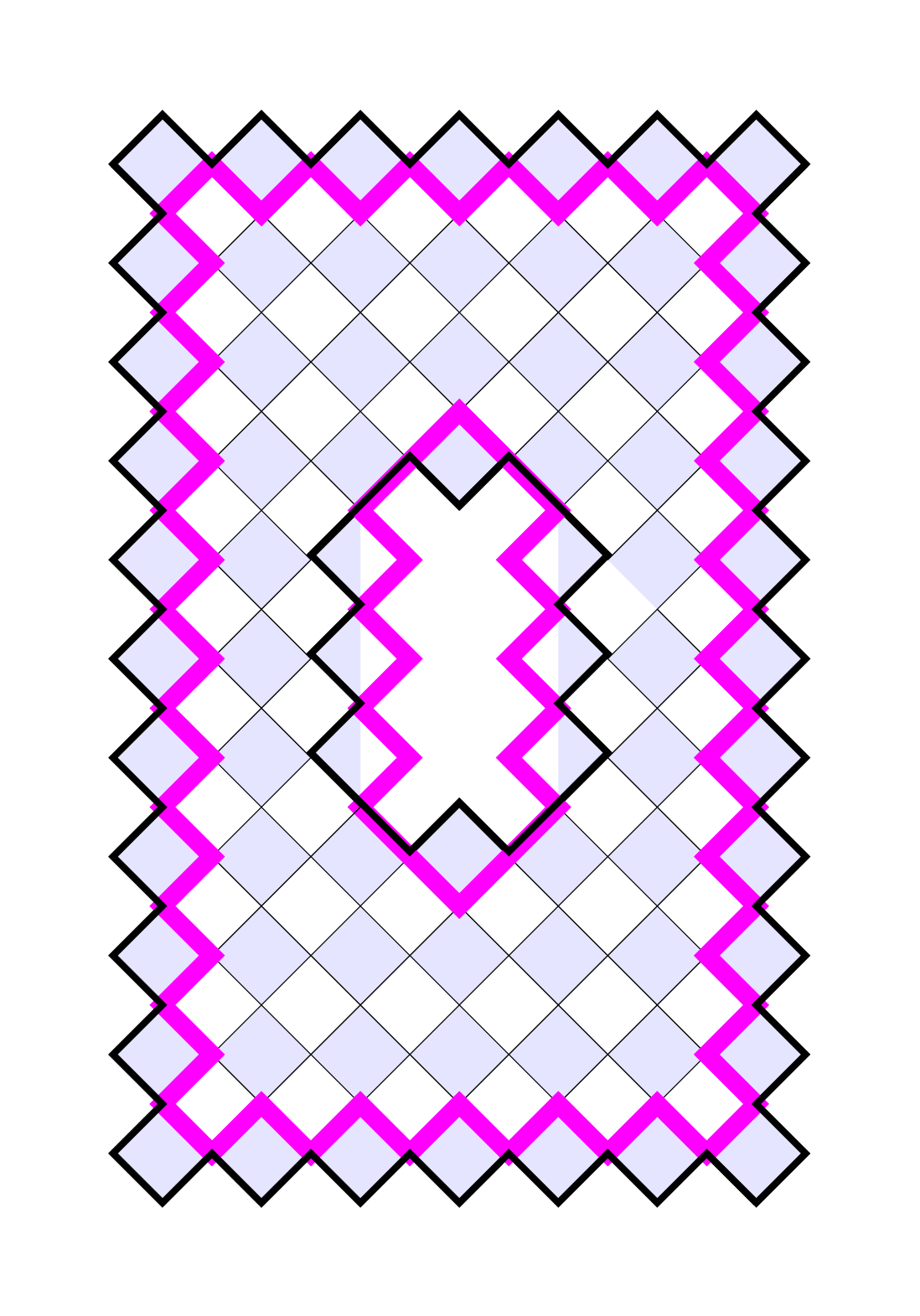}}
}
\vskip0.1in
\caption{ {\it Left.} Applying the complementation theorem to the graph $H=AR''_{m,m-a}\setminus O_{0,a_1}\cup\cdots\cup O_{0,a_s}$ (here $m=13$, $s=1$ and $a_1=3$, so that $a=\sum_{i=1}^s(a_i+1)=4$); $H$ is the portion of the grid graph between the solid black contours. The resulting complement $H'$ (the portion between the thick magenta contours) is the graph $AR''_{m-1,m-a-1}\setminus O_{1,a_1-1}\cup\cdots\cup O_{1,a_s-1}$.  {\it Center.} The next application of the complementation theorem turns $AR''_{m-1,m-a-1}\setminus O_{1,a_1-1}\cup\cdots\cup O_{1,a_s-1}$ into $AR''_{m-2,m-a-2}\setminus O_{2,a_1-2}\cup\cdots\cup O_{2,a_s-2}$. {\it Right.} One more application of the complementation theorem, turning $AR''_{m-2,m-a-2}\setminus O_{2,a_1-2}\cup\cdots\cup O_{2,a_s-2}$ into $AR''_{m-3,m-a-3}\setminus O_{3,a_1-3}\cup\cdots\cup O_{3,a_s-3}$.}
\vskip-0.1in
\label{fde}
\end{figure}

Let $H$ be the planar dual graph of the Aztec rectangle with odd windows $AR''_{m,m-a}\setminus O_{0,a_1}\cup\cdots\cup O_{0,a_s}$ (the picture on the left in Figure \ref{fde} illustrates the case $m=13$, $s=1$ and $a_1=3$, so that $a=\sum_{i=1}^s(a_i+1)=4$); again, for notational simplicity, we omit the upper indices that specify the location along the horizontal symmetry axis $\ell$ of the centers of the holes. Apply the complementation theorem to $H$ with respect to its cellular completion $G$ indicated by the shading in the picture on the left in Figure \ref{fde}. Then the complement $H'$ of $H$ with respect to $G$ is precisely the planar dual of $AR''_{m-1,m-a-1}\setminus O_{1,a_1-1}\cup\cdots\cup O_{1,a_s-1}$, with the centers of the new, evolved holes having the same location along $\ell$ as the old ones. To find the exponent of 2, partition again the set of cells of $G$ into horizontal paths. Out of the total of $m$ rows of 4-cycles that contain all the cells, $m-1$ (all but the central one) consist of a single path of cells, each of type 1.

For the analysis of the central row, assume first that none of the $s$ holes in $AR''_{m,m-a}\setminus O_{0,a_1}\cup\cdots\cup O_{0,a_s}$ communicates with the exterior. The central row is broken up by these $s$ holes into $s+1$ paths of cells: the two extreme ones of type 0, and the remaining $s-1$ of type $-1$. Thus the exponent $t$ of the power of 2 in the complementation theorem is $t=(m-1)-(s-1)=(m+1)-(s+1)$. Next, notice that if for instance the leftmost hole does communicate with the exterior\footnote{ The case when the rightmost hole communicates with the exterior is handled in the same fashion.}, the effect of that on the previous calculation is that the leftmost path of cells in the central row --- which has type 0 --- is no longer present. Therefore, this instance has no effect on the exponent of 2 resulting from the application of the complementation theorem. It follows that, regardless of how the $s$ holes are positioned, we get
\begin{equation}
\M(H)=2^{(m+1)-(s+1)}\M(H').
\label{ede}
\end{equation}
For the subsequent applications of the complementation theorem, let us again first analyze the situation in the case when none of the holes in $AR''_{m,m-a}\setminus O_{0,a_1}\cup\cdots\cup O_{0,a_s}$ communicates with the exterior.
Then the total number of rows containing cells decreases by one unit, and the number of central rows broken into $s+1$ paths of cells increases by one unit. Thus we have two fewer paths of type 1, and $s-1$ more paths of type $-1$, amounting to an exponent of 2 which is $-2\cdot1+(s-1)\cdot(-1)=s+1$ units less than it was at the previous application of the complementation theorem.

\begin{figure}[t]
\centerline{
{\includegraphics[width=0.40\textwidth]{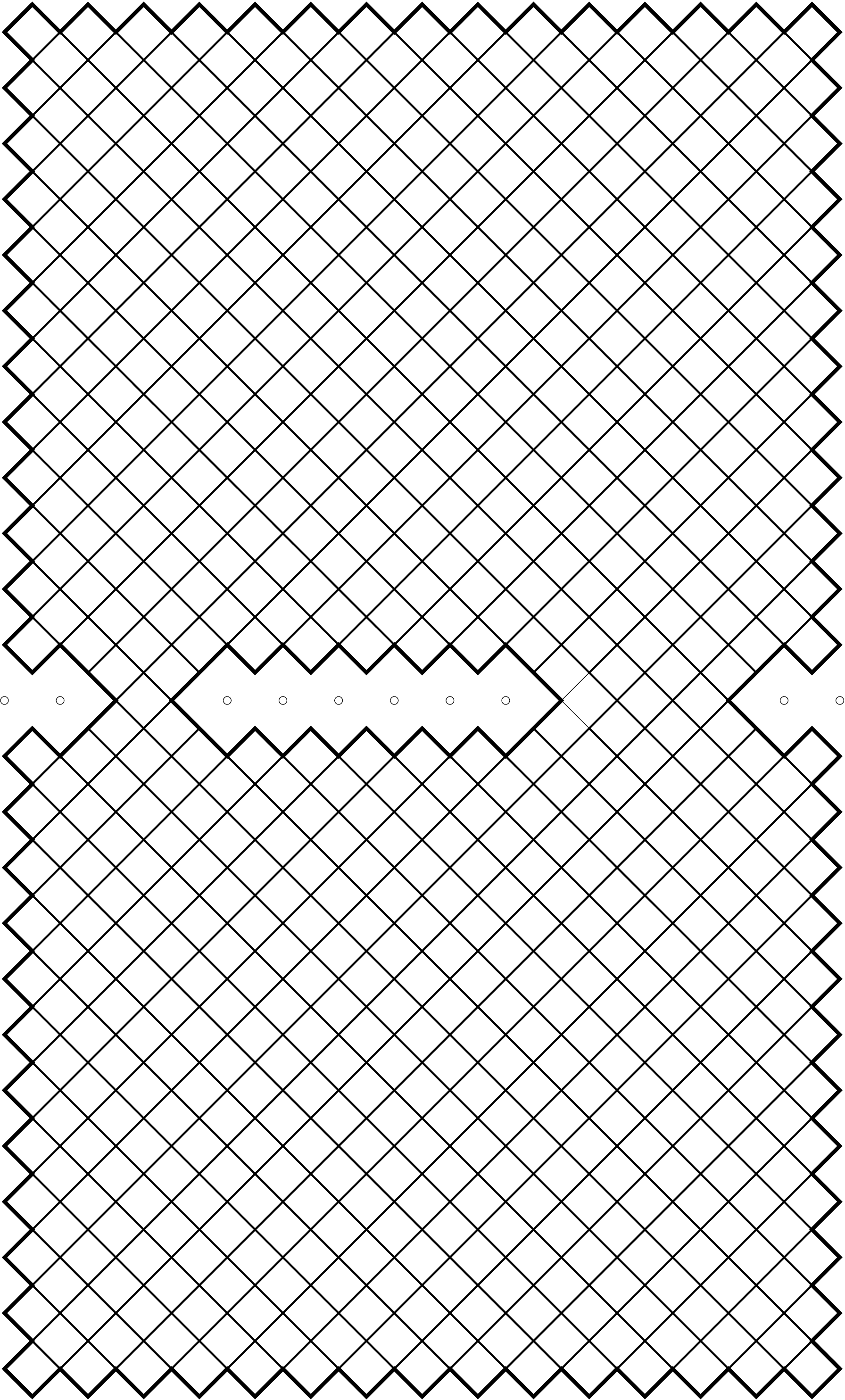}}
\hfill
{\includegraphics[width=0.42\textwidth]{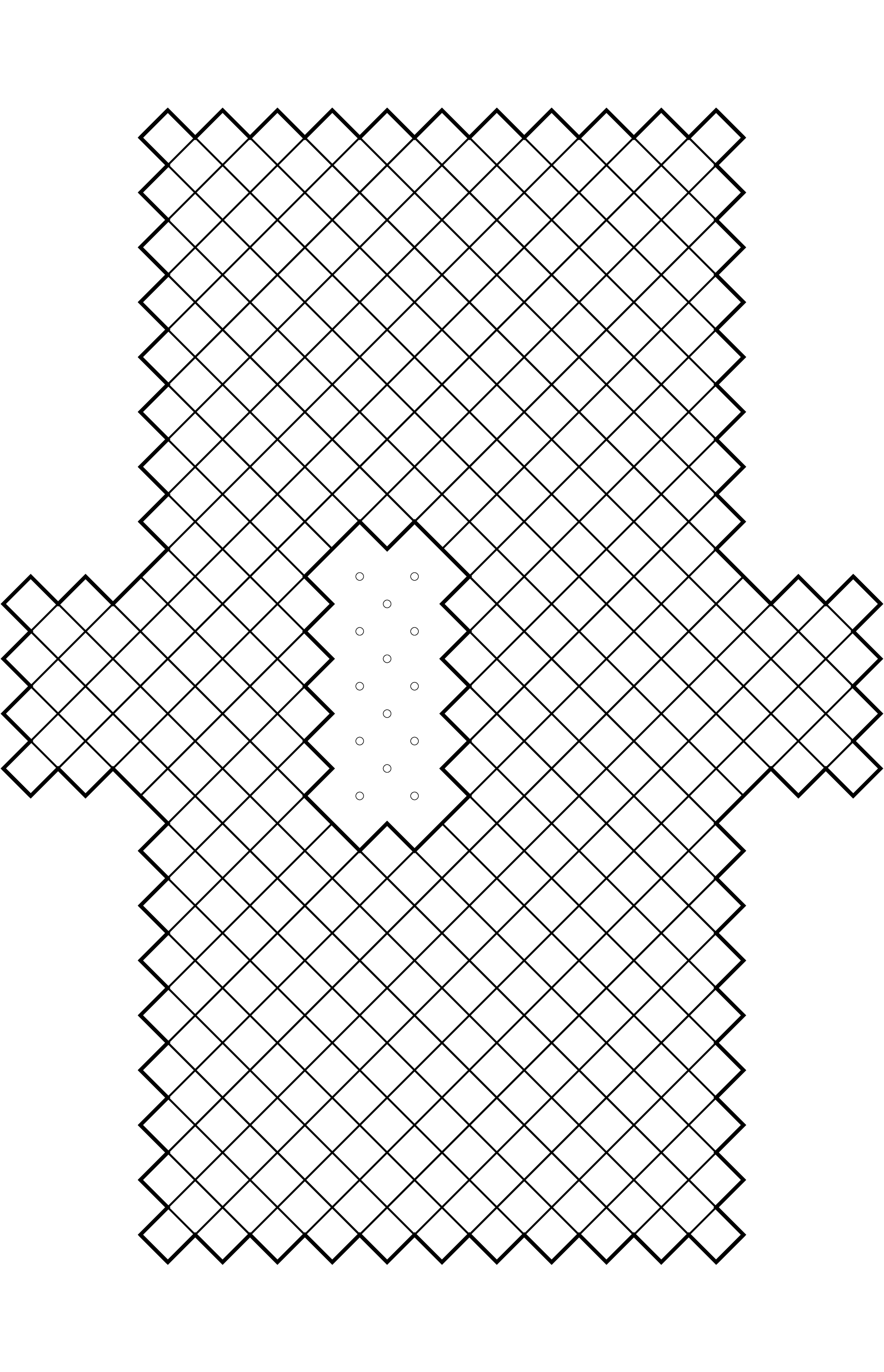}}
}
\vskip0.1in
\caption{ {\it Left.} An $R''$-type graph with holes grouped in consecutive runs, the first and last of which communicate with the exterior.  {\it Right.} After repeated applications of the complementation theorem to the graph on the left, we obtain the dual of the cruciform region with odd Aztec windows shown in Figure~\ref{fbd}.}
\vskip-0.1in
\label{fdx}
\end{figure}

Therefore, after $k$ successive application we obtain
\begin{align}
\M(AR''_{m,m-a}\setminus O_{0,a_1}\cup\cdots\cup O_{0,a_s})
&=
2^{\sum_{j=1}^k(m+1-j(s+1))}\M(AR''_{m-k,m-a-k}\setminus O_{k,a_1-k}\cup\cdots\cup O_{k,a_s-k})
\nonumber
\\
&=
2^{k(m+1)-{k+1\choose 2}(s+1)}\M(AR''_{m-k,m-a-k}\setminus O_{k,a_1-k}\cup\cdots\cup O_{k,a_s-k}),
\label{edf}
\end{align}
which proves \eqref{ebe} in this case.

To complete the proof, suppose for instance that both the leftmost and the rightmost holes in $AR''_{m,m-a}\setminus O_{0,a_1}\cup\cdots\cup O_{0,a_s}$ communicate with the exterior (such an instance is illustrated on the left in Figure \ref{fdx}). Note that in this case successive applications of the complementation theorem may lead to graphs whose appearance is quite different --- the graph in the picture on the right in Figure \ref{fdx} is the result of applying it successively four times to the graph shown on the left in the same figure (the outer boundary of this region belongs to the family of cruciform regions we studied in \cite{df}). However, we claim that what we found above, namely that with each successive application the exponent of 2 is reduced by $s+1$ units, is still true in this case.

Indeed, before the holes that communicate with the exterior ``turn inside out,'' this holds for the reason we mentioned above when we analyzed the first application of the complementation theorem: all that changes is that some paths of cells of type 0 are no longer present, and this clearly has no effect on the exponent of 2. What is left is to argue that the same is true after one or both extremal holes of $AR''_{m,m-a}\setminus O_{0,a_1}\cup\cdots\cup O_{0,a_s}$ have turned into protrusions. Suppose both have done so, and thus our graph has evolved into one that looks like the picture on the right in Figure \ref{fdx}. Then note that each of the central rows of 4-cycles consists of $s-1$ paths of cells, each having type $-1$ (see \cite{df} for details on the complementation theorem and how to work out the exponent of 2 when one applies it). This completes the argument.
$\hfill\square$

\medskip
{\it Proof of Theorem $\ref{tbd}$.} As it was the case for the proofs of Theorems \ref{tba} and \ref{tbb} (see the explanation in the first paragraph of the proof of Theorem \ref{tbb}), the outer boundary and the boundaries of the holes evolve in the same way in Theorems \ref{tbc} and \ref{tbd} under successive applications of the complementation theorem. Thus, the same calculations as in the proof of Theorem \ref{tbc} above prove that, for even $m$, the number of perfect matchings of $AR'''_{m,m-a}\setminus O_{0,a_1}\cup\cdots\cup O_{0,a_s}$ and $AR'''_{m-k,m-a-k}\setminus O_{k,a_1-k}\cup\cdots\cup O_{k,a_s-k}$ are related by precisely the same power of 2 as in \eqref{ebe}, thus proving \eqref{ebf}.
$\hfill\square$

\section{Proof of Theorem \ref{tca}}

The special case of the complementation theorem \cite[Theorem 2.1]{CT} we presented in Section 5 was what we needed for our proofs of Theorems \ref{tba}--\ref{tbd}. In order to prove Theorem \ref{tca} we need another special case of it. More precisely, let $H$ be a toroidal Aztec rectangle graph $T_{m,n}$ with holes in the shape of disjoint odd Aztec rectangles. Fix a chessboard coloring of the 4-cycles of $T_{m,n}$, and let $G$ be the graph consisting of the union of the shaded 4-cycles --- the cells of $G$ --- that contain at least one edge of $H$. As in the special case presented in Section 5, a path of cells of $G$ is a maximal set of contiguous cells lined up either horizontally or vertically along the grid diagonals of $T_{m,n}$; the difference is that now such a path of cells can wrap around the torus. 

In particular, it is possible to have paths of cells formed by a ring of cells --- such paths have no extremal vertices, and are defined to have type 0. If a path of cells is not a ring, the two vertices in it that are furthest apart (in the path of cells, regarded as a graph) are called extremal; the type of such a path of cells is defined as one less than the number of its extremal vertices contained in $H$. The complement $H'$ of $H$ with respect to its cellular completion $G$ is defined as before: the vertex set of $H'$ is obtained from that of $H$ by discarding the extremal vertices contained in $H$, and including those not contained in $H$ (a cellular completion of the graph in Figure \ref{fca} is shown on the left in Figure \ref{fea}; its complement with respect to the indicated cellular completion is shown on the right in the same figure). Then  by \cite[Theorem 2.1]{CT}, equation \eqref{eda} still holds (as before, $L_1,\dotsc,L_k$ is a partition of the set of cells of $G$ into disjoint paths).

\begin{figure}[t]
\centerline{
{\includegraphics[width=0.46\textwidth]{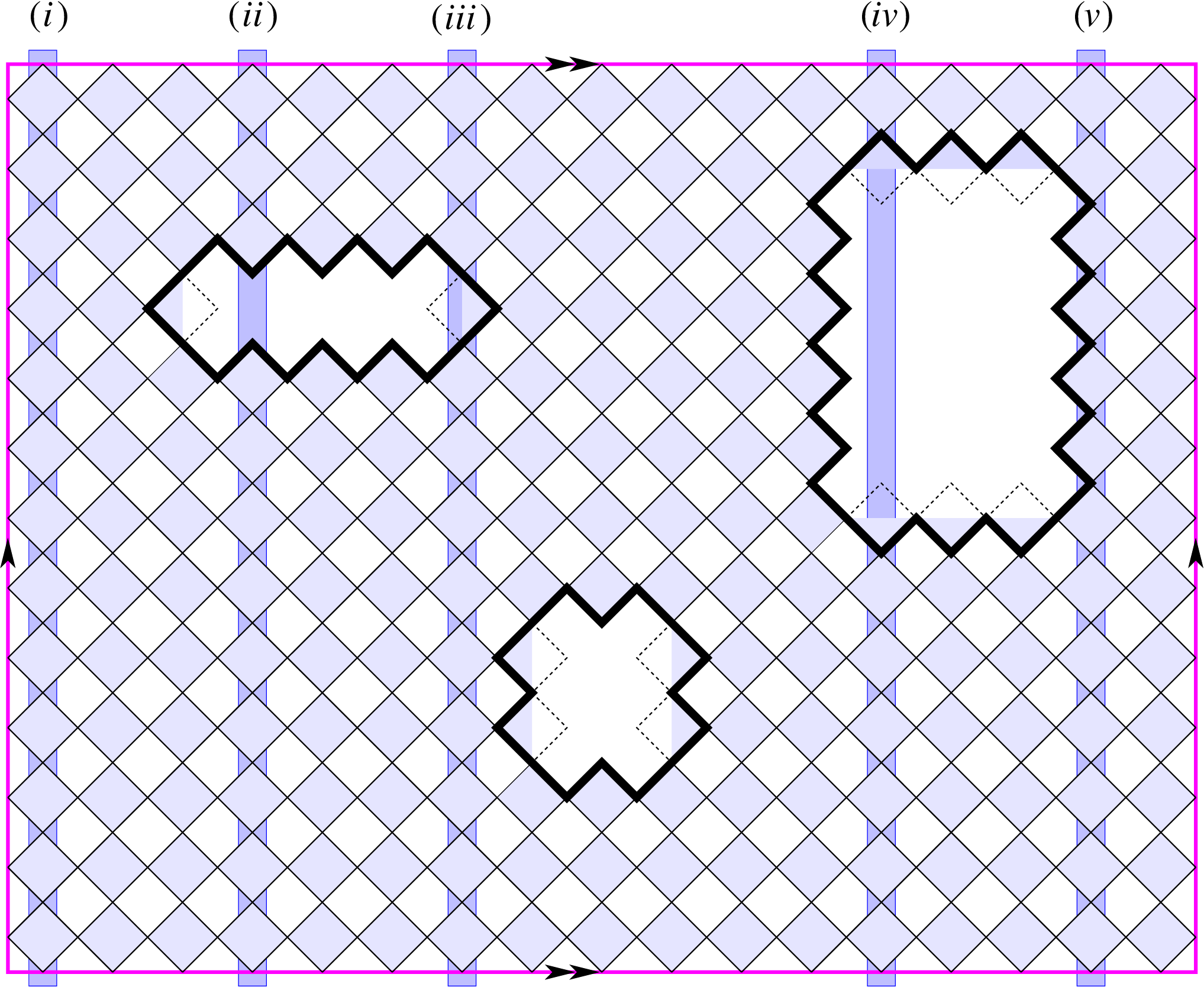}}
\hfill
{\includegraphics[width=0.46\textwidth]{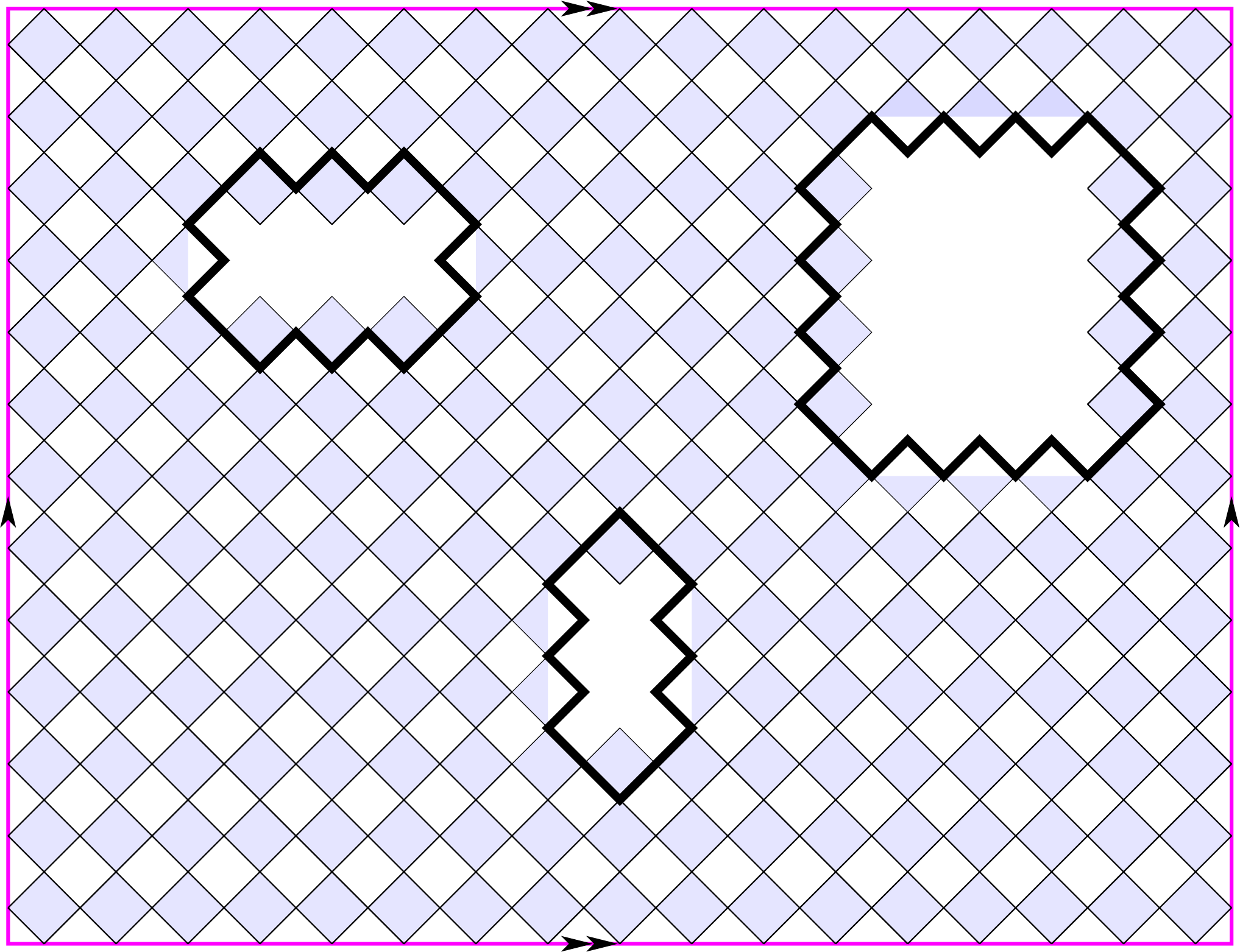}}
}
\vskip0.1in
\caption{ {\it Left.} A cellular completion of the graph in Figure \ref{fca}, with five types of vertical paths of cells; the thick black contours indicate the shapes of the original holes.  {\it Right.} The complement of the the graph in Figure \ref{fca} with respect to the cellular completion on the left; the thick black contours indicate the shapes of the evolved forms of the original holes.}
\vskip-0.1in
\label{fea}
\end{figure}

\medskip
 {\it Proof of Theorem $\ref{tca}$.} By Lemma \ref{tcaa} it follows that the evolved forms $\e(O_1),\dotsc,\e(O_r)$ of the holes are also mutually disjoint.

Fix a chessboard coloring of the 4-cycles of the toroidal Aztec rectangle graph $T_{m,n}$. Let $H$ be the graph $T_{m,n}\setminus O_1\cup\cdots\cup O_r$. Let $G$ be the union of the 4-cycles of $T_{m,n}$ that contain at least one edge of $H$. Then $G$ is a cellular graph, and it is a cellular completion of $H$. 

We first prove the case when each vertical ring of 4-cycles of $T_{m,n}$ meets at most one of the holes $O_1,\dotsc,O_r$. There are then five kinds of vertical paths of cells (one of each type is indicated on the left in Figure \ref{fea}): 

\medskip
$(i)$ vertical rings, which have type 0, 

$(ii)$ vertical paths crossing through a white-placed odd Aztec rectangle hole, for which both extremal vertices are contained in $H$, so have type 1,  

$(iii)$ vertical paths just grazing into a white-placed odd Aztec rectangle hole, which are rings (even though they contain some partial cells), so have type 0, 

$(iv)$ vertical paths crossing through a black-placed odd Aztec rectangle hole, for which none of the extremal vertices are contained in $H$, so have type $-1$, 

$(v)$ vertical paths just touching a white-placed odd Aztec rectangle hole, which are rings, so have type 0.

\medskip
When we construct the complement $H'$, since there is no ``outer boundary'' (the vertices on the boundary of $AR_{m,n}$ were identified to produce $T_{m,n}$), the only change compared to $H$ will be in the evolution of the boundary of the holes. We already know, from Section 5, that a white-placed hole $O$ in the shape of $O_{k,l}$ will evolve so as to acquire the shape $O_{k+1,l-1}=\e(O)$. One readily checks that a black-placed hole $O'$ in the shape of $O_{k,l}$ will evolve to get shape $O_{k-1,l+1}=\e(O')$. Therefore, except for the exponent of 2, it is clear that the complementation theorem yields equation \eqref{eca}.

To get the exponent of 2, note that only paths of cells of type $(ii)$ and $(iv)$ contribute to it. It is easy to see that for each white-placed hole of shape $O_{k,l}$ there are $l$ paths of type $(ii)$, and for each black-placed hole of shape $O_{k,l}$ there are $l+1$ paths of type $(ii)$. This proves equation \eqref{eca} in this case.


Now consider the effect of including an additional hole $O$ that has cells in vertical rings of 4-cycles of $T_{m,n}$ that go through some of the pre-existing holes. For definiteness, assume the new hole $O$ has the shape of the odd Aztec rectangle $O_{k,l}$ and is white-placed. We need to understand the effect on the sum of the types of all vertical paths of cells (which is the exponent of 2 in the complementation theorem) caused by the addition of this new hole. We claim that the effect is that this sum of types increases by $l$ (which, recall, is precisely the flank charge~$\f(O)$ of $O$).

Indeed, the only vertical paths of cells affected by the inclusion of $O$ are those contained in the $l+2$ vertical rings of 4-cycles that meet $O$. For two of these rings (the leftmost and rightmost of the $l+2$), the paths of cells contained in them remain the same (this is because all changes are of the type ``a full cell $c$ is replaced by a partial cell containing both the top and the bottom vertices of $c$,'' and thus no path of cells is interrupted by the change). For each of the middle $l$ vertical rings, the inclusion of $O$ causes one path of cells to be interrupted, and the two newly created ends of paths are both full cells. It is easy to check that irrespective of the type of the interrupted path of cells, the sum of the types of the two newly created paths is 1 more that the type of the original path: if the latter had type 1, both new paths have type 1; if it had type 0, one of the new paths has type 0, the other type 1; and if it had type $-1$, both new paths have type 0.

Therefore, the exponent of 2 increases by $\f(O)$ by the inclusion of the white-placed odd Aztec window $O$. A similar argument shows that the same holds when $O$ is black-placed. This completes the proof.
$\hfill\square$

\section{Concluding remarks}

In this paper we presented an Aztec-diamond-like shape for a hole which, when included in Aztec-diamond-like regions, leads to regions whose number of tilings are given by simple product formulas, thus providing a ``round counterpart'' of Propp's Problem 19 in \cite{ProppList}. More precisely, the families of regions we found are Aztec rectangles, in which holes in the shape of odd Aztec rectangles of a common height have been placed symmetrically about the horizontal symmetry axis $\ell$ (or about the translation of $\ell$ one unit southeast); see Theorems \ref{tba}--\ref{tbd} and Figures \ref{fba}--\ref{fbc}. In some instances our regions look in fact different, being cruciform regions with holes (see Remark 4 and Figures \ref{fbd} and \ref{fdx}).

We also proved that the number of perfect matchings of toroidal Aztec rectangle graphs with holes in the shape of odd Aztec rectangles get multiplied by an explicit power of 2 as the shape of the holes is evolved in a natural way (see Theorem \ref{tca}). In particular, we proved a curious symmetry of the correlation of diagonal slits, which already holds at the level of the finite size correlation (see Theorem \ref{tcd} and Figure \ref{fcb}). It would be interesting to have a direct proof for the fact that the correlation of diagonal slits is invariant under 90-degree rotations.

\end{document}